\documentclass[11pt,a4paper]{article} \usepackage{amsmath}
\usepackage{amsthm} \usepackage{amscd} \usepackage{amssymb}
\usepackage{graphicx} 
\usepackage[labelformat=empty]{subfig}
\usepackage{multirow}
\usepackage{geometry}
\geometry{lmargin=1in,tmargin=1in,bmargin=1in,rmargin=1in}
\usepackage{pstricks}
\newtheorem{theorem}{Theorem}[section]
\newtheorem{lemma}[theorem]{Lemma}
\newtheorem{corollary}[theorem]{Corollary}
\newtheorem{proposition}[theorem]{Proposition}

\newtheorem{conjecture}[theorem]{Conjecture}

\theoremstyle{definition} 
\newtheorem{example}[theorem]{Example}

\theoremstyle{remark} \newtheorem*{remark}{Remark}
 
\newtheorem*{ack}{Acknowledgments}


\newcounter{temp}

\makeatletter \def\square{\RIfM@\bgroup\else$\bgroup\aftergroup$\fi
\vcenter{\hrule\hbox{\vrule\@height.6em\kern.6em\vrule}\hrule}\egroup}
\makeatother
 
\DeclareMathOperator{\Spec}{Spec} \DeclareMathOperator{\Proj}{Proj}

 \DeclareMathOperator{\cone}{cone}
\DeclareMathOperator{\st}{st} \DeclareMathOperator{\link}{lk}
\DeclareMathOperator{\flip}{Fl} 
 
 \DeclareMathOperator{\Def}{Def}

\DeclareMathOperator{\vertices}{vert}

\DeclareMathOperator{\init}{in}
\DeclareMathOperator{\Circ}{circ}
\DeclareMathOperator{\sta}{star}
\DeclareMathOperator{\SL}{SL}
\DeclareMathOperator{\Sp}{Sp}
\DeclareMathOperator{\Spin}{Spin}
\DeclareMathOperator{\Aut}{Aut}

\newcommand{\baseRing}[1]{\ensuremath{\mathbb{#1}}}
 \newcommand{\ZZ}{\baseRing{Z}}
\newcommand{\CC}{\baseRing{C}} \newcommand{\QQ}{\baseRing{Q}}
\newcommand{\A}{\mathcal{A}} \newcommand{\K}{\mathcal{K}}
\newcommand{\PP}{\baseRing{P}}
\newcommand{\GG}{\mathbb{G}}
\newcommand{\mcL}{\mathcal{L}}

\newcommand{\CO}{\mathcal{O}}
\newcommand{\I}{\mathcal{I}}

\newcommand{\trielevenflips}{
\psset{unit=.9cm}
\begin{pspicture}(-5.5,-5.5)(5.5,5)
\psline(1,3)(3,5)
\psline(-1,-3)(-3,-5)
\psline(3,-1)(5,-1)
\psline(3,-1)(1,3)
\psline(-3,1)(-5,1)
\psline(-3,1)(-1,-3)
\psline(1,3)(1,1)(3,-1)
\psline(-1,-3)(-1,-1)(-3,1)
\psline(1,1)(1,-1)(3,-1)
\psline(-1,-1)(1,-1)
\psline(-1,-1)(-1,1)(-3,1)
\psline(1,1)(-1,1)
\psline(1,-3)(1,-1)
\psline(1,-3)(3,-1)
\psline(1,-3)(-1,-3)
\psline(1,-3)(3,-5)
\psline(-1,3)(-1,1)
\psline(-1,3)(-3,1)
\psline(-1,3)(1,3)
\psline(-1,3)(-3,5)
\psline[linestyle=dotted](-1,-3)(1,-1)(-1,1)(1,3)
\psline[linestyle=dotted](1,-3)(-1,-1)(1,1)(-1,3)
\psdots(1,3)(-1,-3)(3,-1)(-3,1)(1,1)(-1,-1)(1,-1)(-1,1)(1,-3)(-1,3)
\psdots[dotstyle=o](3,5)(-3,-5)(5,-1)(-5,1)(3,-5)(-3,5)
\rput(-2.6,-5){$w$}
\rput(2.6,-5){$w$}
\rput(-1.5,-3){$y_{10}$}
\rput(1.6,-3){$y_{11}$}
\rput(-1.4,-1){$x_{10}$}
\rput(1.4,-1.4){$x_{11}$}
\rput(3.2,-1.4){$z_1$}
\rput(5,-1.4){$w$}

\rput(2.6,5){$w$}
\rput(-2.6,5){$w$}
\rput(1.5,3){$y_{01}$}
\rput(-1.6,3){$y_{00}$}
\rput(1.4,1){$x_{01}$}
\rput(-1.4,1.4){$x_{00}$}
\rput(-3.2,1.4){$z_0$}
\rput(-5,1.4){$w$}
\end{pspicture}
}
\newcommand{\tritenflips}{
\psset{unit=.7cm}
\begin{pspicture}(-5.5,-5.5)(5.5,5)
\psdots(0,-1)(0,1)(0,3)(0,-3)(3,3)(-3,-3)(3,0)(-3,0)(5,5)(-5,-5)
\psline(0,-3)(0,3)
\psline(-6,-6)(-3,-3)(0,-1)(3,0)
\psline(6,6)(3,3)(0,1)(-3,0)
\psline(-3,-3)(-3,0)(0,3)(3,3)(3,0)(0,-3)(-3,-3)
\psline(-3,0)(-5,-5)(0,-3)
\psline(3,0)(5,5)(0,3)
\psline[linestyle=dotted](-3,-3)(0,1)(3,0)
\psline[linestyle=dotted](3,3)(0,-1)(-3,0)
\psline[linestyle=dotted,linearc=2](-5,-5)(-5,3)(0,3)
\psline[linestyle=dotted,linearc=2](-5,-5)(3,-5)(3,0)
\psline[linestyle=dotted,linearc=2](5,5)(5,-3)(0,-3)
\psline[linestyle=dotted,linearc=2](5,5)(-3,5)(-3,0)
\rput(-5.7,-4.8){$z_{23}$}
\rput(0,-3.5){$z_{33}$}
\rput(-3.7,0){$z_{22}$}
\rput(2.5,3.5){$z_{11}$}
\rput(-.5,1.5){$z_{12}$}
\rput(5.7,4.8){$y_{23}$}
\rput(0,3.5){$y_{33}$}
\rput(3.7,0){$y_{22}$}
\rput(-2.5,-3.5){$y_{11}$}
\rput(.5,-1.5){$y_{12}$}
\end{pspicture}
}

\newcommand{\trieleven}{
\psset{unit=.4cm}
\begin{pspicture}(-5.5,-5.5)(5.5,5)
\psline(1,3)(3,5)
\psline(-1,-3)(-3,-5)
\psline(3,-1)(5,-1)
\psline(3,-1)(1,3)
\psline(-3,1)(-5,1)
\psline(-3,1)(-1,-3)
\psline(1,3)(1,1)(3,-1)
\psline(-1,-3)(-1,-1)(-3,1)
\psline(1,1)(-1,-1)
\psline(1,1)(1,-1)(3,-1)
\psline(-1,-1)(1,-1)(-1,-3)
\psline(-1,-1)(-1,1)(-3,1)
\psline(1,1)(-1,1)(1,3)
\psline(1,-3)(1,-1)
\psline(1,-3)(3,-1)
\psline(1,-3)(-1,-3)
\psline(1,-3)(3,-5)
\psline(-1,3)(-1,1)
\psline(-1,3)(-3,1)
\psline(-1,3)(1,3)
\psline(-1,3)(-3,5)
\psdots(1,3)(-1,-3)(3,-1)(-3,1)(1,1)(-1,-1)(1,-1)(-1,1)(1,-3)(-1,3)
\psdots[dotstyle=o](3,5)(-3,-5)(5,-1)(-5,1)(3,-5)(-3,5)
\end{pspicture}
}

\newcommand{\triten}{
\psset{unit=.4cm}
\begin{pspicture}(-5.5,-5.5)(5.5,5)
\psline(1,3)(3,5)
\psline(-1,-3)(-3,-5)
\psline(3,-1)(5,-1)
\psline(3,-1)(1,3)
\psline(-3,1)(-5,1)
\psline(-3,1)(-1,-3)
\psline(1,3)(1,1)(3,-1)
\psline(-1,-3)(-1,-1)(-3,1)
\psline(1,1)(-1,-1)
\psline(1,1)(1,-1)(3,-1)
\psline(-1,-1)(1,-1)(-1,-3)
\psline(-1,-1)(-1,1)(-3,1)
\psline(1,1)(-1,1)(1,3)
\psline(1,-3)(1,-1)
\psline(1,-3)(3,-1)
\psline(1,-3)(-1,-3)
\psline(1,-3)(3,-5)

\psline[linestyle=dashed](-3,1)(1,3)
\psdots(1,3)(-1,-3)(3,-1)(-3,1)(1,1)(-1,-1)(1,-1)(-1,1)(1,-3)
\psdots[dotstyle=o](3,5)(-3,-5)(5,-1)(-5,1)(3,-5)
\end{pspicture}
}
\newcommand{\trinine}{
\psset{unit=.4cm}
\begin{pspicture}(-5.5,-5.5)(5.5,5)
\psline(1,3)(3,5)
\psline(-1,-3)(-3,-5)
\psline(3,-1)(5,-1)
\psline(3,-1)(1,3)
\psline(-3,1)(-5,1)
\psline(-3,1)(-1,-3)
\psline(1,3)(1,1)(3,-1)
\psline(-1,-3)(-1,-1)(-3,1)
\psline(1,1)(-1,-1)
\psline(1,1)(1,-1)(3,-1)
\psline(-1,-1)(1,-1)(-1,-3)
\psline(-1,-1)(-1,1)(-3,1)
\psline(1,1)(-1,1)(1,3)

\psline[linestyle=dashed](3,-1)(-1,-3)
\psline(-3,1)(1,3)
\psdots(1,3)(-1,-3)(3,-1)(-3,1)(1,1)(-1,-1)(1,-1)(-1,1)
\psdots[dotstyle=o](3,5)(-3,-5)(5,-1)(-5,1)
\end{pspicture}
}
\newcommand{\trieight}{
\psset{unit=.4cm}
\begin{pspicture}(-5.5,-5.5)(5.5,5)
\psline(1,3)(3,5)
\psline(-1,-3)(-3,-5)
\psline(3,-1)(5,-1)
\psline(3,-1)(1,3)
\psline(-3,1)(-5,1)
\psline(-3,1)(-1,-3)
\psline(1,3)(1,1)(3,-1)
\psline(-1,-3)(-1,-1)(-3,1)
\psline(1,1)(-1,-1)
\psline(1,1)(1,-1)(3,-1)
\psline(-1,-1)(1,-1)(-1,-3)

\psline[linestyle=dashed](-3,1)(1,1)
\psline(3,-1)(-1,-3)
\psline(-3,1)(1,3)
\psdots(1,3)(-1,-3)(3,-1)(-3,1)(1,1)(-1,-1)(1,-1)
\psdots[dotstyle=o](3,5)(-3,-5)(5,-1)(-5,1)
\end{pspicture}
}

\newcommand{\triseven}{
\psset{unit=.4cm}
\begin{pspicture}(-5.5,-5.5)(5.5,5)
\psline(1,3)(3,5)
\psline(-1,-3)(-3,-5)
\psline(3,-1)(5,-1)
\psline(3,-1)(1,3)
\psline(-3,1)(-5,1)
\psline(-3,1)(-1,-3)
\psline(1,3)(1,1)(3,-1)
\psline(-1,-3)(-1,-1)(-3,1)
\psline(1,1)(-1,-1)

\psline[linestyle=dashed](3,-1)(-1,-1)
\psline(-3,1)(1,1)
\psline(3,-1)(-1,-3)
\psline(-3,1)(1,3)
\psdots(1,3)(-1,-3)(3,-1)(-3,1)(1,1)(-1,-1)
\psdots[dotstyle=o](3,5)(-3,-5)(5,-1)(-5,1)
\end{pspicture}
}
\newcommand{\trisix}{
\psset{unit=.4cm}
\begin{pspicture}(-5.5,-5.5)(5.5,5)
\psline(1,3)(3,5)
\psline(-1,-3)(-3,-5)
\psline(3,-1)(5,-1)
\psline(3,-1)(1,3)
\psline(-3,1)(-5,1)
\psline(-3,1)(-1,-3)
\psline(1,3)(1,1)(3,-1)

\psline[linestyle=dashed](-1,-3)(1,1)

\psline(-3,1)(1,1)
\psline(3,-1)(-1,-3)
\psline(-3,1)(1,3)
\psdots(1,3)(-1,-3)(3,-1)(-3,1)(1,1)
\psdots[dotstyle=o](3,5)(-3,-5)(5,-1)(-5,1)
\end{pspicture}
}
\newcommand{\trifive}{
\psset{unit=.4cm}
\begin{pspicture}(-5.5,-5.5)(5.5,5)
\psline(1,3)(3,5)
\psline(-1,-3)(-3,-5)
\psline(3,-1)(5,-1)
\psline(3,-1)(1,3)
\psline(-3,1)(-5,1)
\psline(-3,1)(-1,-3)

\psline[linestyle=dashed](-1,-3)(1,3)

\psline(3,-1)(-1,-3)
\psline(-3,1)(1,3)
\psdots(1,3)(-1,-3)(3,-1)(-3,1)
\psdots[dotstyle=o](3,5)(-3,-5)(5,-1)(-5,1)
\end{pspicture}
}
\newcommand{\trifour}{
\psset{unit=.4cm}
\begin{pspicture}(-5.5,-5.5)(5.5,5)
\psline(-3,1)(-5,1)
\psline(1,3)(3,5)
\psline(3,-1)(5,-1)
\psline[linestyle=dashed](3,-1)(-3,1)
\psline(1,3)(-3,1)
\psline(3,-1)(1,3)
\psdots(-3,1)(1,3)(3,-1)
\psdots[dotstyle=o](-5,1)(3,5)(5,-1)
\end{pspicture}
}

\newcommand{\octa}{
\psset{unit=2.2cm}
\begin{pspicture}(-1.25,-1.25)(1.25,1.25)
\pspolygon[linecolor=gray,fillstyle=solid,fillcolor=lightgray](0,-1)(-.707,-.707)(0,1)(1,0)
\psline(1,0)(.707,.707)(0,1)(-.707,.707)(-1,0)(-.707,-.707)(0,-1)(.707,-.707)(1,0)
\psline(1,0)(-.707,-.707)
\psline(0,-1)(0,1)
\rput(0,1.1){$i$}
\rput(1.1,0){$j$}
\rput(0,-1.1){$k$}
\rput(-.8,-.8){$l$}
\rput(.15,.4){$\delta_{ik}$}
\rput(.5,-.05){$\delta_{jl}$}
\rput(-.2,-.2){$Q$}
\end{pspicture}
}

\newcommand{\octb}{
\psset{unit=2.2cm}
\begin{pspicture}(-1.25,-1.25)(1.25,1.25)
	\pspolygon[linecolor=gray,fillstyle=solid,fillcolor=lightgray](0,-1)(.707,.707)(1,0)(.707,-.707)
	\psline(1,0)(.707,.707)(0,1)(-.707,.707)(-1,0)(-.707,-.707)(0,-1)(.707,-.707)(1,0)
	\psline(.707,-.707)(.707,.707)
	\psline(0,-1)(1,0)
\end{pspicture}
}

\newcommand{\octc}{
\psset{unit=2.2cm}
\begin{pspicture}(-1.25,-1.25)(1.25,1.25)
	\pspolygon[linecolor=gray,fillstyle=solid,fillcolor=lightgray](0,-1)(-.707,-.707)(-1,0)(.707,.707)
	\psline(1,0)(.707,.707)(0,1)(-.707,.707)(-1,0)(-.707,-.707)(0,-1)(.707,-.707)(1,0)
	\psline(-.707,-.707)(.707,.707)
	\psline(0,-1)(-1,0)
\end{pspicture}
}

\newcommand{\octd}{
\psset{unit=2.2cm}
\begin{pspicture}(-1.25,-1.25)(1.25,1.25)
	\pspolygon[linecolor=gray,fillstyle=solid,fillcolor=lightgray](.707,.707)(1,0)(0,-1)(-.707,-.707)
	\psline(1,0)(.707,.707)(0,1)(-.707,.707)(-1,0)(-.707,-.707)(0,-1)(.707,-.707)(1,0)
	\psline(0,-1)(.707,.707)
	\psline(-.707,-.707)(1,0)
\end{pspicture}
}

\def\address#1#2{\begingroup
\noindent\parbox[t]{7.8cm}{%
\small{\scshape\ignorespaces#1}\par\vskip1ex
\noindent\small{\itshape E-mail address}%
\/: #2\par\vskip4ex}\hfill%
\endgroup}%

\date{}
\begin{document}

\title{Degenerations to Unobstructed Fano Stanley-Reisner Schemes}\author{Jan
Arthur Christophersen \and Nathan Owen Ilten}   

\maketitle
\begin{abstract} We construct degenerations of Mukai varieties and
linear sections thereof to special unobstructed Fano Stanley-Reisner
schemes corresponding to convex deltahedra. This can be used to find
toric degenerations of rank one index one Fano threefolds. In the
second part we find many higher dimensional unobstructed Fano and
Calabi-Yau Stanley-Reisner schemes. The main result is that the
Stanley-Reisner ring of the boundary complex of the dual polytope of
the associahedron has trivial $T^2$.
\end{abstract}

\section*{Introduction} In \cite{mu:cuk}, Mukai showed that rank one
index one Fano threefolds of genus $g\leq 10$ appear as complete
intersections in (weighted) projective spaces and homogeneous spaces.
\noindent
	\begin{center}\begin{tabular}{|l| l| l| l|} \hline Name &
Degree & Genus & Embedding\\ \hline $V_2'$& $2$ & $2$&Sextic in
$\PP(1,1,1,1,3)$\\ $V_4'$& $4$ & $3$&Quartic in $\PP^4$\\ $V_6$& $6$ &
$4$&Intersection of quadric and cubic in $\PP^5$\\ $V_8$& $8$ &
$5$&Intersection of three quadrics in $\PP^6$\\ $V_{10}$& $10$ & $6$&
Codim. 2 linear subspace of $M_6:=Q_2\cap G(2,5)$\\ $V_{12}$& $12$ &
$7$& Codim. 7 linear subspace of $M_7:=SO(5,10)$\\ $V_{14}$& $14$ &
$8$& Codim. 5 linear subspace of $M_8:=G(2,6)$\\ $V_{16}$& $16$ & $9$&
Codim. 3 linear subspace of $M_9:=LG(3,6)$\\ $V_{18}$& $18$ & $10$&
Codim. 2 linear subspace of $M_{10}:=\GG_2$\\ \hline
\end{tabular} \end{center} Here $Q_2$ is a generic quadric. The
varieties $M_g$ are called \emph{Mukai varieties}. The homogeneous
spaces involved are the Grassmannians $G(2,5)$ and $G(2,6)$ associated
to $\SL_5\CC$ and $\SL_6\CC$, the (even) orthogonal Grassmannian or
spinor variety $SO(5,10)$ associated to $\Spin_{10}\CC$, the
Lagrangian Grassmannian $LG(3,6)$ associated to $\Sp_6\CC$, and $\GG_2$
which is associated to the adjoint representation of the exceptional
semi-simple Lie Group $G_2$. Note that the $V_{2g-2}$ denote
deformation classes as in the original 
classification of Iskovskih in \cite{is:fa}, see Corollary
\ref{homspace} below.  

In the first part of this paper we compare this series with a special series of Fano
Stanley-Reisner schemes. If $T$ is a combinatorial sphere then the
Stanley-Reisner scheme of the join of $T$ and a simplex is Fano
(Proposition \ref{fanocomp}). In Section \ref{sec:tri} we describe a
series of triangulated $2$-spheres $T_{n}$, $4 \le n \le 11$, with $n$
vertices such that the Stanley-Reisner scheme of the cone over $T_{n}$
is a natural flat degeneration of $V_{2n-4}$. In fact for $6 \le g \le
10$ the Stanley-Reisner scheme of the join of $T_{g+1}$ and a suitable
simplex is a degeneration of $M_g$.

The series of $T_n$ is special for several reasons. To begin with,
starting with $T_4$, which is the boundary complex of the tetrahedron,
$T_n$ is gotten from $T_{n-1}$ by starring a vertex into an edge and
there is a well defined rule for which edge to star in. This gives a
systematic way of generating the Stanley-Reisner degenerations of the
$V_{2g-2}$.

Secondly, for  $4 \le n \le 10$, the  $T_n$ are the boundary complexes of
the \emph{convex deltahedra}, i.e.\ we see all convex deltahedra except the
icosahedron. Recall that a deltahedron is a 3-dimensional polytope with regular
triangles as faces.  There are exactly 8 convex deltahedra as proven
in \cite{fw:on}. Drawings, names and descriptions may be found for
example in \cite[Figure 2.18]{cr:po}. 

This seems at the moment to be just a nice
coincidence but there might be deeper explanation. Although the $T_n$
come in a series there is (as usual) no system relating their
automorphism groups $\Aut(T_n)$. Yet one can check case by case that
for $6 \le g \le 9$,  if $r_g$ is the index of the Fano homogeneous space
in the Mukai list, then $|\Aut(T_{g+1})| = 24 - 2r_g$.

Finally, for $4 \le n \le 10$, the Stanley-Reisner scheme of the cone
over $T_n$ is unobstructed. In fact the Stanley-Reisner ring of $T_n$
has trivial $T^2$. Thus $V_4'$, $V_6$, \ldots, $V_{16}$ all degenerate to
Fano Stanley-Reisner schemes which are smooth points in the
relevant Hilbert schemes. This can be used to find toric varieties to which the
Fano threefolds degenerate (Proposition
\ref{cor:toricdegen}). The point is
that, if a Stanley-Reisner scheme to which we degenerate
corresponds to a smooth point in some Hilbert scheme, any toric
variety also degenerating to this Stanley-Reisner scheme must
deform to a variety corresponding to a general point on the same
component of the Hilbert scheme. Our original motivation for this
article was in fact to find such toric degenerations, which have
become of interest in connection with mirror symmetry, see for example
\cite{pr:we} and \cite{ilp:to}.

These results point towards at least two continuations. One can ask if
degenerations to Stanley-Reisner schemes help find toric
degenerations of other Fano threefolds. This is the subject of a
separate paper \cite{ci:hil} where we, for $d \le 12$, study the Hilbert scheme of
degree $d$ smooth Fano threefolds in their anticanonical
embeddings. We use this to classify all possible degenerations of
these varieties to canonical Gorenstein toric Fanos. 

Taking another direction, one could ask for higher dimensional
combinatorial spheres with trivial $T^2$. This is the subject of the
second part of this paper. It is based on the observation that our
$T_9$ is boundary complex of the triaugmented triangular prism, which
again is the dual polytope of the 2-dimensional associahedron. Let
$\A_n$ be the  boundary
complex of the dual of the $(n-4)$-dimensional associahedron and $A_n$
its Stanley-Reisner ring. The 
main result of the second part of this paper is Theorem \ref{t2a} 
which states that
$T^2_{A_n} = 0$ for all $n$. For the sake of completeness we also
compute $T^1_{A_n}$ and describe the versal deformation of $\Proj (A_n
\otimes_k k[x_0, \dots , x_m])$. 

The $T_n$ in dimension two appear as edge starrings and unstarrings
of $\A_6$. In the last section we generalize this and use edge
starrings and unstarrings of $\A_n$ to find many more
combinatorial spheres with trivial $T^2$. Our Corollary \ref{hypero}
shows that if    $r_1, \dots ,r_m$ are integers with
$n > r_i \ge 4$ and
$$\sum_{i=1}^m r_i = n + 3(m-1)$$
then $\A_n$ is as a stellar subdivision of $\A_{r_1} \ast \A_{r_2}
\ast \dots \ast A_{r_m}$ via (different) series of edge
starrings. This yields many intermediate $(n-4)$-spheres whose
Stanley-Reisner ring has trivial $T^2$, generalizing the sequence
$T_6, \dots ,T_9$.

 In dimension 2 there is
exactly one edge starring of $\A_6$ yielding a sphere with $T^2=0$,
namely $T_{10}$ and any edge starring of $T_{10}$ has non-trivial
$T^2$. We finish this paper by listing all 74 combinatorial 
3-spheres with trivial $T^2$ coming from successive edge starrings of
$\A_7$.

Several results needed to prove $T^2_{A_n} = 0$ are valid in general
for flag complexes and we include them in a separate Section
\ref{flag}. To ensure that general linear sections $G(2,n)$ correspond
to generic points on Hilbert scheme components (needed for Corollary
\ref{g2ndegen}) we prove some results on deformations of
complete intersections in rigid Fano varieties which may be of general
interest, Proposition \ref{forget} and Corollary \ref{homspace}.

\begin{ack} We are grateful to 
Kristian Ranestad for helpful discussions. Much
of this work was done while the second author was visiting the
University of Oslo funded by ``sm{\aa }forsk-midler''. 
\end{ack}

 \section{Preliminaries} 

\subsection{Simplicial complexes and Stanley-Reisner Schemes}
\label{sec:sr} We now recall some basic facts about simplicial
complexes and Stanley-Reisner schemes, see for example \cite{sta:com}.
Let $[n]$ be the set $\{0,\ldots,n\}$ and $\Delta_n$ be the full
simplex $2^{[n]}$. An abstract \emph{simplicial complex} is any subset
$\K\subset\Delta_n$ such that if $f\in\K$ and $g\subset f$, then $g\in
\K$. Elements $f\in\K$ are called \emph{faces}; the dimension of a
face $f$ is $\dim f:=\#f-1$. Zero-dimensional faces are called
\emph{vertices} and we denote the set of vertices by
$V(\K)$. One-dimensional faces are called \emph{edges}. By
$\Delta_{-1}$ we will denote the simplicial complex consisting solely
of the empty set. Two simplicial complexes are isomorphic if there is
a bijection of the vertices inducing a bijection of all faces. We will
not differentiate between isomorphic complexes.

Given two simplicial complexes $\K$ and $\mcL$, their \emph{join} is
the simplicial complex
$$
\K * \mcL=\{f\vee g\ | \ f\in\K,\ g\in\mcL\}.
$$
 If $f\in \mathcal{K}$ is a face, we
may define
\begin{itemize}
\item the {\it link} of $f$ in $\mathcal{K}$;
$\;\link(f,\mathcal{K}):=\{g\in \mathcal{K}: g\cap f =
\emptyset \text{ and } g\cup f\in \mathcal{K}\}$,

\item the {\it open star} of $f$ in $\mathcal{K}$; 
$\;\st(f,\mathcal{K}):=\{g\in \mathcal{K}: f\subseteq g\}$, and

\item the {\it closed star} of $f$ in $\mathcal{K}$; 
$\;\overline{\st}(f,\mathcal{K}):=\{g\in \mathcal{K}: g\cup f\in
\mathcal{K}\}$.

\end{itemize}
Notice that the closed star is the subcomplex
$\overline{\st}(f,\mathcal{K}) = \bar{f}\ast \link(f,\mathcal{K})$.
If $f$ is an $r$-dimensional face of $\mathcal{K}$, define the {\em
valency} of $f$, $\nu(f)$, to be the number of $(r+1)$-dimensional
faces containing $f$.  Thus $\nu(f)$ equals the number of vertices in
$\link(f,\mathcal{K})$.

The {\em geometric realization} of $\mathcal{K}$, denoted
$|\mathcal{K}|$, is defined as
\[ |\mathcal{K}| = \big\{\alpha: [n] \to [0,1] | \{i | \alpha(i) \ne
0\}\in \mathcal{K} \mbox{ and $\,\sum_i \alpha(i) = 1$} \big\}\, .
\] In this paper we will be interested in the cases where $\K$ is a
combinatorial sphere or ball. A \emph{combinatorial $n$-sphere} is a
simplicial complex for which $|\mathcal{K}|$ is $PL$-homeomorphic to
the boundary of $\Delta_{n+1}$.  A \emph{combinatorial $n$-ball} is a
simplicial complex for which $|\mathcal{K}|$ is $PL$-homeomorphic to
$\Delta_{n}$. In general a simplicial complex $\mathcal{K}$ is a {\em
combinatorial $n$-manifold} (with boundary) if for all non-empty faces
$f\in \mathcal{K}$, $|\link(f,\mathcal{K})|$ is a combinatorial sphere
(or ball) of dimension $n-\dim f -1$.

 If $b \subseteq V(\K)$, denote by $\bar{b}$ the full simplex which is the
power set of $b$ and $\partial b = \bar{b} \setminus \{b\}$ its
boundary. 
We recall the notion of {\it stellar exchange} defined in
\cite{pac:plh}. (See also \cite{vir:lec}.)  Assume $\mathcal{K}$ is a
complex with a non-empty face $a$ such that $\link(a,\mathcal{K})=
\partial b \ast L$ for some non-empty set $b$ and $b$ is not a face of
$\link(a,\mathcal{K})$. We can now make a new complex
$\flip_{a,b}(\mathcal{K})$ by removing $\overline{\st}(a)=
\partial b \ast \bar{a} \ast L$ and replacing it with $
\partial a \ast \bar{b} \ast L$,
$$\flip_{a,b}(\mathcal{K}):=
(\mathcal{K}\setminus (
\partial b \ast \bar{a} \ast L)) \cup
\partial a \ast \bar{b} \ast L \, .$$ 

If $|b| = 1$, that is if $b$ is
a new vertex $v$, then the procedure $\flip_{f,v}(\K)$ is classically
known as {\em starring $v$ at the face $f$} and we denote the result
as $\sta(f, \K)$. (If $f$ also is a vertex we are just renaming $f$ with
$v$.) A complex $\K^\prime$ is known as a {\em stellar subdivision} of $\K$
if there exists a series $\K=\K_0, \K_1, \dots , \K_r = \K^\prime$
such that $\K_i = \sta(f, \K_{i-1})$ for some face $f \in K_{i-1}$.

To any simplicial complex $\K\subset\Delta_n$, we associate a
square-free monomial ideal $I_\K\subset \CC[x_0,\ldots,x_n]$
$$
I_\K:=\langle x_p \ | \ p\in\Delta_n\setminus\K\rangle
$$
where for $p\in\Delta_n$, $x_p:=\prod_{i\in p}x_i$. This gives rise to
the \emph{Stanley-Reisner ring} $A_\K:=\CC[x_0,\ldots,x_n]/I_\K$ and a
corresponding projective scheme $\PP(\K):=\Proj A_\K$ which we call a
Stanley-Reisner scheme.  The scheme $X:=\PP(\K)$ ``looks'' like the
complex $\K$: each face $f\in\K$ corresponds to some $\PP^{\dim
f}\subset X$ and the intersection relations among these projective
spaces are identical to those of the faces of $\K$. In particular,
facets of $\K$ correspond to the irreducible components of $X$. If
$\K$ is pure dimensional then the
degree of  $\PP(\K)$ will be the number of facets of $\K$. We also have
$$H^p({\mathbb
P}(\mathcal{K}),\mathcal{O}_{{\mathbb P}(\mathcal{K})}) \simeq
H^{p}(\mathcal{K};\mathbb{C})\, ,$$ by a result of Hochster, see
\cite[Theorem 2.2]{ac:def}. 

We also can make the affine scheme ${\mathbb A}(\mathcal{K})=\Spec
A_\mathcal{K}$. If $f$ is a subset of $V(\K)$, let $D_{+}(x_f)
\subseteq {\mathbb P}(\mathcal{K})$ be the chart corresponding to
homogeneous localization of $A_\mathcal{K}$ by the powers of
$x_f$. Then $D_{+}(x_f)$ is empty unless $f\in \mathcal{K}$ and if
$f\in \mathcal{K}$ then $$ D_{+}(x_f) = {\mathbb
A}(\link(f,\mathcal{K})) \times (k^{*})^{\dim f} \, .$$

If $\mathcal{K}$ is an \emph{orientable}
combinatorial manifold without boundary then the canonical sheaf is
trivial (\cite[Theorem 6.1]{be:gra}). Thus a smoothing of such a
${\mathbb P}(\mathcal{K})$ would yield smooth schemes with trivial
canonical bundle and structure sheaf cohomology equaling
$H^{p}(\mathcal{K};\mathbb{C})$. In particular if $\mathcal{K}$ is a
combinatorial sphere then a smoothing of ${\mathbb P}(\mathcal{K})$,
if such exists,  is
Calabi-Yau. We shall see that certain balls correspond in this way to
Fano schemes.

The combinatorial nature of Stanley-Reisner schemes also makes their
deformation theory more accessible than usual and has been studied
in \cite{ac:cot} and \cite{ac:def}. We will apply results
from these papers throughout.

\subsection{Cotangent cohomology of Stanley-Reisner schemes} 
We recall
one of the descriptions in \cite{ac:cot} of the multi-graded
pieces of $T^i_{A_\mathcal{K}}$ for any simplicial complex
$\mathcal{K}$. We refer also to this paper, \cite{ac:def} and the
references therein to standard works for
definitions of the various cotangent cohomology spaces. 

We recall first some
geometric constructions on simplicial complexes. To every non-empty
$f\in \mathcal{K}$, one assigns the {\em 
relatively open} simplex $\langle f\rangle \subseteq |\mathcal{K}|$;
\[ \langle f\rangle = \{\alpha\in |\mathcal{K}| \, | \, \alpha(i) \ne
0 \text{ if and only if } i\in f \}\, .
\] On the other hand, each subset $Y \subseteq \mathcal{K}$, i.e. $Y$
is not necessarily a subcomplex, determines a topological space
\[ \langle Y\rangle:=
\begin{cases} \bigcup_{f\in Y}\langle f\rangle& \text{if
$\emptyset\not\in Y$}, \\ \cone \left(\bigcup_{f\in Y}\langle
f\rangle\right)& \text{if $\emptyset\in Y$}\, .
\end{cases}
\] In particular, $\langle \mathcal{K}\setminus\{\emptyset\}\rangle =
|\mathcal{K}|$ and $\langle \mathcal{K}\rangle = |\cone(\mathcal{K})|$
where $\cone(\mathcal{K})$ is the simplicial complex $\Delta_0\ast
\mathcal{K}$.

Define
\begin{gather*} U_{b} = U_{b}(\mathcal{K}) :=\{f \in \mathcal{K} :
f\cup b \not\in \mathcal{K}\} \\ \widetilde{U}_{b} =
\widetilde{U}_{b}(\mathcal{K}) := \{f \in \mathcal{K} : (f\cup
b)\setminus \{v\} \not\in \mathcal{K} \text{ for some } v\in b\}
\subseteq U_{b}\,.
\end{gather*} Notice that $U_b=\widetilde{U}_{b}=\mathcal{K}$ unless $
\partial b$ is a subcomplex of $\mathcal{K}$. If $b \notin \K$ and
$\partial b \subseteq \K$ or $b \in \K$, define
$$L_b = L_b(\K):=
\bigcap_{b^\prime\subset b}\link(b^\prime, \K) \, .$$ We have
\begin{equation*} \mathcal{K}\setminus U_b =
\begin{cases} \emptyset \\ \overline{\st}(b)
\end{cases} \hspace{0.0em}\mbox{and}\hspace{0.7em}
\mathcal{K}\setminus \widetilde{U}_{b} =
\begin{cases}
\partial b \ast L_b& \text{if $b$ is a non-face},\\ (\partial b \ast
L_b) \cup \overline{\st}(b)& \text{if $b$ is a face}.
\end{cases} \vspace{1ex}
\end{equation*}

\begin{theorem}\emph{(\cite[Theorem 13]{ac:cot})}\label{topopen} The
homogeneous pieces in degree $\mathbf{c}=\mathbf{a}-\mathbf{b} \in
\mathbb{Z}^{|V|}$ (with disjoint supports $a$ and $b$) of the
cotangent cohomology of the Stanley-Reisner ring $A_\mathcal{K}$
vanish unless $a\in \mathcal{K}$, $\mathbf{b} \in \{0,1\}^{|V|}$, $b
\subseteq V(\link(a,\K))$ and $b\ne \emptyset$.  If these conditions
are satisfied, we have isomorphisms
\[ T^i_{A_\mathcal{K},\mathbf{c}} \;\simeq\; H^{i-1}\big(\langle
U_{b}(\link(a,\mathcal{K}))\rangle, \, \langle
\widetilde{U}_{b}(\link(a,\mathcal{K}))\rangle,\,k\big) \;\text{ for }
i=1,2
\] unless $b$ consists of a single vertex.  If $b$ consists of only
one vertex, then the above formulae become true if we use the reduced
cohomology instead.
\end{theorem}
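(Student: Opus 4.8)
The plan is to compute each multigraded piece $T^{i}_{A_{\mathcal{K}},\mathbf{c}}$, $i=1,2$, directly from the Lichtenbaum--Schlessinger complex of the monomial ideal $I_{\mathcal{K}}\subset R:=\CC[x_{0},\dots,x_{n}]$, exploiting the $\ZZ^{|V|}$-grading throughout. Let $L^{\bullet}$ be the three-term complex $\Der_{\CC}(R,A)\to \Hom_{R}(I_{\mathcal{K}},A)\to \Hom_{A}(\mathrm{Rel}/\mathrm{Rel}_{0},A)$ placed in cohomological degrees $0,1,2$, with $A=A_{\mathcal{K}}$ and $\mathrm{Rel}/\mathrm{Rel}_{0}$ the non-Koszul relations among a minimal monomial generating set of $I_{\mathcal{K}}$; its cohomology there is $T^{0},T^{1},T^{2}$. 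Since every generator of $I_{\mathcal{K}}$ is a squarefree monomial $x_{p}$ attached to a minimal nonface $p$, every term of $L^{\bullet}$ is $\ZZ^{|V|}$-graded with weights in $\{-1,0,1\}^{|V|}$, and a first inspection of which graded Hom-pieces can be nonzero yields the stated vanishing outside $\mathbf{c}=\mathbf{a}-\mathbf{b}$ with $a\in\mathcal{K}$, $\mathbf{b}\in\{0,1\}^{|V|}$, $b\subseteq V(\link(a,\mathcal{K}))$ and $b\neq\emptyset$; in particular $\Der_{\CC}(R,A)_{-\mathbf{b}}\neq 0$ only when $|b|=1$, already foreshadowing the reduced-cohomology clause.

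Next I would reduce to the case $a=\emptyset$. Since $D_{+}(x_{a})=\mathbb{A}(\link(a,\mathcal{K}))\times(k^{*})^{\dim a}$, and $T^{i}$ commutes with localization and, for $i\geq 1$, with smooth base change along the torus factor, the weight-$(\mathbf{a}-\mathbf{b})$ part of $T^{i}_{A_{\mathcal{K}}}$ is identified with the weight-$(-\mathbf{b})$ part of $T^{i}_{A_{\link(a,\mathcal{K})}}$; the projective statement then follows from the affine one in the usual way, reading off $T^{i}$ of $\PP(\mathcal{K})$ from the multigraded $T^{i}$ of the affine cone $\mathbb{A}(\mathcal{K})$. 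So it remains to identify $T^{i}_{A_{\mathcal{K}},-\mathbf{b}}$ for $b\subseteq V(\mathcal{K})$, $b\neq\emptyset$.

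The heart of the proof is to realize $L^{\bullet}_{-\mathbf{b}}$, the pure weight $-\mathbf{b}$ part of $L^{\bullet}$, as a relative simplicial cochain complex. Giving each graded Hom-term a basis indexed by faces of $\mathcal{K}$ subject to the constraints forced by weight $-\mathbf{b}$ -- conditions of the shape ``$f\cup b\notin\mathcal{K}$'' (placing $f$ in $U_{b}$) and ``$(f\cup b)\setminus v\notin\mathcal{K}$ for some $v\in b$'' (placing $f$ in $\widetilde{U}_{b}$) -- and matching the Lichtenbaum--Schlessinger differentials with signed simplicial coboundaries, one verifies that $L^{\bullet}_{-\mathbf{b}}$ is exactly the cochain complex of $|\mathcal{K}|$ with the subcomplexes $\mathcal{K}\setminus U_{b}$ and $\mathcal{K}\setminus\widetilde{U}_{b}$ quotiented out, i.e.\ the relative cochain complex of the pair $\langle\widetilde{U}_{b}\rangle\subseteq\langle U_{b}\rangle$; here one uses the identifications $\mathcal{K}\setminus U_{b}=\overline{\st}(b)$ (or $\emptyset$) and $\mathcal{K}\setminus\widetilde{U}_{b}=\partial b\ast L_{b}$ (with $\overline{\st}(b)$ adjoined when $b$ is a face) recorded above. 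A standard argument with the complementary open sets (neighborhood deformation retracts inside the CW complex $|\mathcal{K}|$) identifies the cohomology of this complex with the relative cohomology of the pair, and the shift $i\mapsto i-1$ is forced by $L^{\bullet}$ sitting in degrees $0,1,2$. When $b=\{v\}$, $\partial b=\{\emptyset\}$, so the empty face -- equivalently the cone point of $\langle Y\rangle$ when $\emptyset\in Y$, equivalently the augmentation term contributed by the now-nonzero $\Der_{\CC}(R,A)_{-\mathbf{b}}$ -- enters the bookkeeping, and keeping versus discarding it is precisely the difference between reduced and ordinary cohomology.

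I expect the main obstacle to be this third step: faithfully tracking every term and differential of the algebraic complex, in a single multidegree, through the several case distinctions (whether $b$ is a face or a nonface of $\mathcal{K}$, whether $\partial b$ is a subcomplex of $\mathcal{K}$), and checking that the pair $(\langle U_{b}\rangle,\langle\widetilde{U}_{b}\rangle)$ of these non-closed subspaces of $|\mathcal{K}|$ genuinely computes the ordinary relative cohomology in the statement and not some compactly supported or Borel--Moore variant.
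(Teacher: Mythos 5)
The paper does not prove this theorem: it is imported wholesale from the reference \cite{ac:cot} (Altmann--Christophersen, \emph{Cotangent cohomology of Stanley-Reisner rings}), Theorem 13, and is used here as a black box. So there is no ``paper's own proof'' to compare your proposal against in a strict sense. That said, your outline does follow the general strategy of the cited reference: exploit the fine $\ZZ^{|V|}$-grading on a Lichtenbaum--Schlessinger-type complex, read off the vanishing constraints from the combinatorics of multidegrees, reduce to $a=\emptyset$ by localizing at $x_a$ (this is precisely \cite[Proposition 11]{ac:cot}, restated here as Proposition~\ref{aempty}), and then identify the weight $-\mathbf{b}$ strand with a simplicial cochain complex. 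Your assessment of where the real work lies is accurate.

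One point to be more careful about in step three, since it is the crux. The weight $-\mathbf{b}$ strand of the algebraic complex, once you index its basis by faces, is most naturally a \emph{relative simplicial cochain complex of closed subcomplexes of $\mathcal{K}$}: cochains of $\mathcal{K}$ supported on $U_b$ modulo those supported on $\widetilde{U}_b$, which by the long exact sequence of the triple computes $H^{*}\bigl(|\mathcal{K}\setminus\widetilde{U}_{b}|,\,|\mathcal{K}\setminus U_{b}|\bigr)$ up to the degree shift. The theorem, however, is stated in terms of the \emph{open} pair $\bigl(\langle U_b\rangle,\langle\widetilde{U}_b\rangle\bigr)$ (plus a cone point when $\emptyset\in U_b$), and passing between the two formulations is a non-trivial separate step: $\langle U_b\rangle$ and $\langle\widetilde{U}_b\rangle$ are complements of closed subcomplexes, and the requisite comparison is not a direct excision of a CW pair. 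So phrasing this as ``the relative cochain complex of the pair $\langle\widetilde{U}_b\rangle\subseteq\langle U_b\rangle$'' glosses over exactly the subtlety you correctly suspect at the end (``genuinely computes the ordinary relative cohomology in the statement and not some compactly supported or Borel--Moore variant''). Before claiming the theorem you should either carry out the comparison between the open-pair and closed-pair formulations explicitly, or verify from \cite{ac:cot} which formulation their algebraic bookkeeping actually produces and that it agrees with the one quoted here.
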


Since $T^i_{A_\mathcal{K},\mathbf{c}}$ depends only on the supports
$a$ and $b$ we may denote it $T^i_{a-b}(\mathcal{K})$. We will have
use for
\begin{proposition}\emph{(\cite[Proposition
11]{ac:cot})}\label{aempty} If $\,b\subseteq V(\link(a))$, then the
map $f\mapsto f\setminus a$ induces isomorphisms
$T^i_{a-b}(\mathcal{K}) \simeq T^i_{\emptyset -b}(\link(a,
\mathcal{K}))$ for $i=1,2$.
\end{proposition}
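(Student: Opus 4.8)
The natural route is to read off both sides of the claimed isomorphism from Theorem \ref{topopen} and observe that they are described by the same topological data. First I would note that, after possibly discarding trivial cases ($a=\emptyset$, or $b=\emptyset$, for which both sides vanish), the hypotheses of Theorem \ref{topopen} hold on the left: $a\in\mathcal{K}$, $b\subseteq V(\link(a,\mathcal{K}))$, and $b\ne\emptyset$, so
$$
T^i_{a-b}(\mathcal{K})\;\simeq\;H^{i-1}\big(\langle U_b(\link(a,\mathcal{K}))\rangle,\ \langle\widetilde U_b(\link(a,\mathcal{K}))\rangle,\ k\big),\qquad i=1,2.
$$
Next, setting $\mathcal{L}=\link(a,\mathcal{K})$, I would apply Theorem \ref{topopen} to $\mathcal{L}$ with the empty face in the role of $a$: here $\emptyset\in\mathcal{L}$ (because $a\in\mathcal{K}$), $b\subseteq V(\mathcal{L})=V(\link(\emptyset,\mathcal{L}))$, and $b\ne\emptyset$, so
$$
T^i_{\emptyset-b}(\mathcal{L})\;\simeq\;H^{i-1}\big(\langle U_b(\link(\emptyset,\mathcal{L}))\rangle,\ \langle\widetilde U_b(\link(\emptyset,\mathcal{L}))\rangle,\ k\big),\qquad i=1,2.
$$
Since $\link(\emptyset,\mathcal{L})=\mathcal{L}=\link(a,\mathcal{K})$, the two right-hand sides are literally the same relative cohomology group, which gives the isomorphism.

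A couple of points then need care. The reduced-versus-unreduced caveat in Theorem \ref{topopen} is harmless because the set $b$ is the same on both sides: either $|b|=1$ and one uses reduced cohomology in both formulas, or $|b|\ge 2$ and one uses ordinary cohomology in both. The only genuinely non-formal step is checking that the isomorphism produced this way is the one \emph{induced by} $f\mapsto f\setminus a$, as asserted. For this I would unwind the definitions of $U_b$, $\widetilde U_b$ and of $\langle\,\cdot\,\rangle$: the faces of $\mathcal{K}$ used to build the left-hand complex are those $f$ with $a\subseteq f$ (equivalently those meeting the closed star of $a$), and $f\mapsto f\setminus a$ is precisely the bijection between such faces and the faces of $\mathcal{L}$ (inverse $g\mapsto g\cup a$); one checks that it carries $U_b(\cdot)$ and $\widetilde U_b(\cdot)$ on the $\mathcal{K}$ side to the corresponding sets on the $\mathcal{L}$ side, hence induces the identification of topological pairs above. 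This is the step I expect to be the main obstacle — not difficult, but requiring careful bookkeeping with the $U_b$/$\widetilde U_b$ combinatorics.

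If one wanted a proof that does not invoke Theorem \ref{topopen} (which is the logically later statement in \cite{ac:cot}), I would instead argue by localization. Cotangent cohomology commutes with the flat localization $A_\mathcal{K}\to (A_\mathcal{K})_{x_a}$, and $(A_\mathcal{K})_{x_a}\cong A_{\link(a,\mathcal{K})}\otimes_k k[x_j^{\pm 1}: j\in a]$ by the chart description recalled in Section \ref{sec:sr}; a K\"unneth decomposition for cotangent cohomology, together with the vanishing of $T^i$ of a torus for $i\ge 1$, gives $T^i((A_\mathcal{K})_{x_a})\cong T^i(A_{\link(a,\mathcal{K})})\otimes_k k[x_j^{\pm 1}: j\in a]$ for $i=1,2$. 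Taking the graded piece whose $a$-coordinates are (sufficiently positive and) arbitrary and whose remaining coordinates are $-\mathbf{1}_b$, and using that $T^i_{A_\mathcal{K},\mathbf{c}}$ depends only on the supports, identifies $T^i_{a-b}(\mathcal{K})$ with $T^i_{\emptyset-b}(\link(a,\mathcal{K}))$, the map $f\mapsto f\setminus a$ being the combinatorial shadow of the localization isomorphism. In that approach the main work is justifying the K\"unneth step and keeping track of the multigrading; for the present paper the shorter route via Theorem \ref{topopen} suffices.
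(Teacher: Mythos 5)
The paper does not prove this proposition: it is quoted verbatim from \cite[Proposition 11]{ac:cot}, so there is no in-paper argument to compare against. Judged on its own terms, your first route is formally correct — applying Theorem \ref{topopen} once to $\K$ in degree $a-b$ and once to $\mathcal{L}=\link(a,\K)$ in degree $\emptyset-b$ gives literally the same relative cohomology group on the right-hand side, and your checks of the hypotheses and of the reduced/unreduced dichotomy are sound. But, as you yourself flag, this risks circularity: in \cite{ac:cot} Proposition 11 precedes Theorem 13 and is in fact used as a reduction step in establishing the topological description, so deducing Proposition 11 from Theorem 13 is only legitimate if one already knows the latter has an independent proof. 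There is also the point you raise that identifying the resulting isomorphism with the specific map induced by $f\mapsto f\setminus a$ requires unwinding how the $U_b$, $\widetilde U_b$ pairs are built, which again amounts to retracing the combinatorics that Proposition 11 packages. Your second route — localize at $x_a$, use the chart identity $(A_\K)_{x_a}\cong A_{\link(a,\K)}\otimes_k k[x_j^{\pm 1}:j\in a]$ (recalled in Section \ref{sec:sr}), exploit that $T^i$ commutes with flat base change and that the torus factor contributes nothing to $T^1,T^2$, then read off the appropriate multigraded piece — is the genuinely non-circular argument and is essentially the proof given in \cite{ac:cot}; the only remaining work there is exactly the multigrading bookkeeping and the K\"unneth/Zariski--Jacobi step, which is the same mechanism the present paper uses for Proposition \ref{tensor}. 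So: no gap, but the first version should be regarded only as a consistency check, with the localization version as the real proof.
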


We include the following for lack of reference.
\begin{proposition} \label{tensor} If $A$ and $B$ are $k$-algebras
then there are exact (split) sequences
$$ 0 \to T^i_B \otimes_k A \to T^i_{A\otimes_k B} \to T^i_A
\otimes_k B \to0$$ of cotangent modules for all $i$.
\end{proposition}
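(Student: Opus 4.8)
The plan is to deduce the statement from the behaviour of the (Andr\'e--Quillen / Lichtenbaum--Schlessinger) cotangent complex $L_{-/k}$ under a base change over the field $k$, using that $T^i_C = H^i\mathrm{RHom}_C(L_{C/k},C)$ for a $k$-algebra $C$ and that, $k$ being a field, every $\otimes_k$ is exact.

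First I would establish, in the derived category of $C:=A\otimes_k B$-modules, the direct sum decomposition
$$L_{C/k}\;\simeq\;\big(L_{A/k}\otimes_k B\big)\ \oplus\ \big(L_{B/k}\otimes_k A\big).$$
Since $C$ is obtained from $A$ by the flat (indeed free) base change $k\to B$, flat base change for the cotangent complex gives $L_{C/B}\simeq L_{A/k}\otimes_A C\simeq L_{A/k}\otimes_k B$, and symmetrically $L_{C/A}\simeq L_{B/k}\otimes_k A$. Feeding these into the Jacobi--Zariski (transitivity) triangle for $k\to A\to C$ turns it into $L_{A/k}\otimes_k B\to L_{C/k}\to L_{B/k}\otimes_k A\to[1]$. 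One then checks — a coherence compatibility between the base-change isomorphism and the transitivity map, standard in the cotangent-complex formalism — that the composite $L_{A/k}\otimes_k B\to L_{C/k}\to L_{C/B}$, the second arrow being the edge map from the triangle for $k\to B\to C$, is the identity under $L_{C/B}\simeq L_{A/k}\otimes_k B$. Hence the triangle splits and yields the decomposition.

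Next I would apply $\mathrm{RHom}_C(-,C)$. By adjunction $\mathrm{RHom}_C(L_{A/k}\otimes_A C,\,C)\simeq\mathrm{RHom}_A(L_{A/k},C)$, so $T^i$ of the first summand is $T^i(A/k,\,A\otimes_k B)$ and of the second is $T^i(B/k,\,A\otimes_k B)$. Now $A\otimes_k B$ is a free $A$-module; here the one point requiring care is that, to pull this free module out of $\mathrm{RHom}_A(L_{A/k},-)$, we use that $L_{A/k}$ is pseudo-coherent and hence represented degreewise by finite free modules — harmless for the finitely generated $k$-algebras (e.g.\ Stanley--Reisner rings) at hand. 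Granting this, $T^i(A/k,A\otimes_k B)\cong T^i_A\otimes_k B$ and $T^i(B/k,A\otimes_k B)\cong T^i_B\otimes_k A$, so $T^i_{A\otimes_k B}\cong(T^i_A\otimes_k B)\oplus(T^i_B\otimes_k A)$ as $A\otimes_k B$-modules; tracking which maps realise the inclusion and projection (the natural functoriality maps for $B\to A\otimes_k B$ and $A\to A\otimes_k B$) yields the asserted split short exact sequence, after the harmless interchange of $A$ and $B$.

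The main obstacle is precisely this last finiteness point — commuting $\mathrm{RHom}$ with the base change $-\otimes_k B$ — together with the bookkeeping that identifies the maps in the sequence with the natural ones; both are routine once the cotangent-complex machinery is in place. Alternatively, one can bypass derived categories altogether and run the same argument through the two long exact Jacobi--Zariski sequences in cotangent cohomology with coefficients in $C$, observing that each is split by the relevant base-change isomorphism so that all connecting maps vanish.
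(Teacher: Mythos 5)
Your proof is correct, and it takes a genuinely different (if closely related) route from the paper's. The paper works entirely at the level of the cotangent cohomology modules: it invokes the flat base change isomorphism $T^i(R/A;\, M\otimes_S A)\simeq T^i(B/S;\,M)\otimes A$ for a cocartesian square of flat ring maps, plugs it into the Jacobi--Zariski long exact sequence for $k\to A\to A\otimes_k B$, and then observes that swapping the roles of $A$ and $B$ produces a section of $T^i_{A\otimes_k B}\to T^i_A\otimes_k B$, which kills the connecting maps and yields the split short exact sequence. You instead establish the splitting one level earlier, in the derived category, by showing the transitivity triangle for the cotangent complex of $k\to A\to A\otimes_k B$ itself splits into $L_{C/k}\simeq (L_{A/k}\otimes_k B)\oplus(L_{B/k}\otimes_k A)$, and then apply $\mathrm{RHom}_C(-,C)$ and take cohomology. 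The payoff of your version is a cleaner statement (the decomposition holds at the complex level, so it automatically yields the split sequence in every cohomological degree and for all coefficient modules at once); the cost is that you have to verify the coherence compatibility between the base-change and transitivity maps, and you have to justify commuting $\mathrm{RHom}_A(L_{A/k},-)$ with the typically infinite direct sum $A\otimes_k B$ over $A$. You rightly flag this last point and cover it by pseudo-coherence of $L_{A/k}$; note that the paper's step ``since $B$ is a free $k$-module, $T^i(A/k;\,A\otimes_k B)\simeq T^i_A\otimes_k B$'' silently relies on the same finiteness (commuting $T^i(A/k;-)$ with arbitrary direct sums), so you are being more careful than the source, not less. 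Your closing remark about running the argument through the two Jacobi--Zariski long exact sequences and noting that the base-change isomorphism splits them is, in effect, the paper's proof.
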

\begin{proof} Consider a cocartesian diagram of rings
$$
\begin{CD} B @>>> R\\ @AA\alpha A @AAA\\ S @>\beta>> A
\end{CD}$$ with both $\alpha$ and $\beta$ flat. Then by standard
properties of the cotangent modules (see e.g.\ \cite{an:hom}), if $M$
is a $B$-module, $T^i(R/A; M \otimes_S A) \simeq T^i(B/S; M) \otimes
A$. The morphisms $k \to A \to A \otimes_k B$ yield the Zariski-Jacobi
sequence
$$ \dots \to T^i(A \otimes_k B/A; A \otimes_k B) \to T^i_{A\otimes_k
  B} \to T^i(A/k; A \otimes_k B) \to \dots \, .$$ 
Since $B$ is a free $k$-module $T^i(A/k; A \otimes_k B) \simeq  T^i_A
\otimes_k B$ and the isomorphism above yields $ T^i(A \otimes_k B/A; A
\otimes_k B) \simeq  T^i_B\otimes_k A$. Thus the sequence reads
$$ \dots \to T^i_B\otimes_k A \to T^i_{A\otimes_k B} \to T^i_A
\otimes_k B \to T^{i+1}_B \otimes_k A \to \dots \, .$$ Switching $A$
and $B$ gives a natural section to $T^i_{A\otimes_k B} \to T^i_A
\otimes_k B$, so the map is surjective and the result follows.
\end{proof}

\subsection{Dual associahedra} \label{ad}
By $\A_n$ we denote the $n-4$ dimensional simplicial complex which is
the boundary complex of the dual polytope of the associahedron. The
associahedron (also known as the Stasheff polytope) plays a role in
many fields and various generalizations and realizations have appeared
in the recent literature, see e.g.\ the introduction in \cite{hl:re}
and the references therein. For our purposes the description of $\A_n$ given by
Lee in \cite{le:ass} is the most useful.

Consider the $n$-gon and index the vertices in cyclical order by $i =
1, \dots , n$. Denote by $\delta_{ij}$ the diagonal between vertex $i$
and vertex $j$. The set of $\frac{1}{2} n (n-3)$ diagonals will be the
vertex set of $\A_n$, call it $V_n$. A set $\{\delta_{i_1j_1}, \dots ,
\delta_{i_rj_r}\}$ of $r+1$ diagonals is an $r$-face of $\A_n$ if they
do not cross, i.e.\ they partition the $n$-gon into a union of $r+2$
polygons. The facets of $\A_n$ correspond therefore to the
triangulations of the $n$-gon with $n$ vertices. The number of facets
is thus the Catalan number
$$c_{n-2} = \frac{1}{n-2}\binom{2(n-3)}{n-3}\, .$$
The automorphism group
of $\A_n$ is the dihedral group $D_n$ and the action is induced by the
natural action on the $n$-gon.

For small $n$ we have $\A_3 = \{\emptyset\}$, $\A_4 $ is two vertices
with no edge which we denote $S^0$, $\A_5$ is the boundary of the
pentagon and $\A_6$ is the boundary complex of the triaugmented
triangular prism.

\section{Degenerations to unobstructed Fano Stanley-Reisner \\
schemes}\label{sec:gen} 
We state and prove here general results we will
apply to special cases in this paper and in \cite{ci:hil}. Consider a
triangulated $n$-ball $B$. Since triangulations of spheres are
degenerate Calabi-Yau, one may ask under what conditions the boundary
complex corresponds to the anticanonical divisor of $\mathbb{P}(B)$.
\begin{proposition}\label{fanocomp}  Let $T$ be any combinatorial sphere. For $m\geq 0$
consider the variety $X=\PP(T*\Delta_m)$.  Then $\omega_X\cong 
\mathcal{O}_X(-m-1)$. In particular, $X$ is Fano and if $m = 0$ the
natural embedding is anticanonical.
\end{proposition}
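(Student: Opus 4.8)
The plan is to compute the canonical (dualizing) sheaf of $X = \PP(T*\Delta_m)$ directly from the combinatorics, using the fact that Stanley-Reisner schemes are glued from projective spaces along coordinate subspaces, and that the dualizing sheaf of a reduced connected scheme that is Gorenstein in codimension zero can be read off from its restriction to each top-dimensional component together with the gluing along the codimension-one strata. First I would recall that if $T$ is a combinatorial $(n-1)$-sphere, then $T*\Delta_m$ is a combinatorial $(n+m)$-ball, and $\PP(T*\Delta_m)$ is pure of dimension $n+m$ with irreducible components indexed by the facets of $T*\Delta_m$; each such facet is $\sigma \vee [m]$ for $\sigma$ a facet of $T$, and the corresponding component is a $\PP^{n+m}$. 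Two components meet along the $\PP$ corresponding to the common face, which has codimension one in each component precisely when the two facets of $T$ share a ridge.

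The key step is a local-to-global dualizing-sheaf computation. On each component $\PP^{n+m}$ we have $\omega_{\PP^{n+m}} = \mathcal{O}(-n-m-1)$, but the dualizing sheaf of the reducible scheme $X$ is not simply the collection of these; it is twisted by the conductor/adjunction data along the codimension-one intersections. The clean way to organize this is: $\PP(T*\Delta_m)$ is a Cohen-Macaulay scheme (Stanley-Reisner rings of balls are Cohen-Macaulay by Reisner's criterion, since links of nonempty faces in a ball are balls or spheres, hence have vanishing reduced homology below top degree), so it has a dualizing sheaf $\omega_X$ which is a sheaf (not just a complex), and $\omega_X$ is reflexive, so it is determined by its behaviour on the smooth locus — the disjoint union of the big open cells $D_+(x_f)$ for $f$ a facet, together with the codimension-one strata where exactly two components meet transversally along a hyperplane in each. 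On the union of a single top cell and its adjacent codimension-one strata one can use the adjunction/residue description: $\omega_X$ restricted to the $\PP^{n+m}$ component, as a subsheaf of the constant-rank piece, is $\mathcal{O}_{\PP^{n+m}}(-n-m-1+d)$ where $d$ counts the hyperplane sections along which that component is glued to its neighbours inside $X$.

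Here is where the sphere hypothesis enters, and it is the main obstacle: I must show that for each facet $\sigma \vee [m]$ of $T*\Delta_m$, the number of ridges of $T*\Delta_m$ contained in it that are shared with another facet equals exactly $n+m$, i.e. \emph{every} ridge of this facet is interior (shared), with multiplicity accounting. A ridge of $\sigma \vee [m]$ is obtained by deleting one vertex; deleting a vertex of $[m]$ gives a face not of the form (ball interior) but lying in the join with $\partial\Delta_m$-type data, while deleting a vertex of $\sigma$ gives $(\sigma \setminus v)\vee[m]$. Since $T$ is a sphere, every ridge $\sigma\setminus v$ of the facet $\sigma$ of $T$ lies in exactly two facets of $T$ — this is the pseudomanifold-without-boundary property of spheres — so those $n$ ridges are all shared; and the $m+1$ ridges coming from deleting a vertex of $[m]$ are shared because $\Delta_m$ itself, while a ball, is joined to the closed sphere $T$, so $(\sigma)\vee([m]\setminus w)$ is still contained in two facets, namely $\sigma\vee[m]$ and $\sigma'\vee[m]$ for the neighbour $\sigma'$ of $\sigma$ across... no: more carefully, $\sigma \vee ([m]\setminus w)$ lies in $\sigma \vee [m]$ only, since $[m]$ has a unique extension — so these $m+1$ ridges are \emph{boundary} ridges of the ball. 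Thus exactly $n$ of the $n+m+1$ ridges of each facet are interior. The twist on each component is therefore $\mathcal{O}(-n-m-1+n) = \mathcal{O}(-m-1)$, and since the twists agree on overlaps (each gluing hyperplane contributes symmetrically), they glue to give $\omega_X \cong \mathcal{O}_X(-m-1)$ globally. I would make the gluing-compatibility precise by checking it on the charts $D_+(x_f)$ using the description $D_+(x_f) = \mathbb{A}(\link(f,\mathcal{K}))\times(k^*)^{\dim f}$ from the preliminaries, reducing the statement to the link of a facet, where it is trivial. Finally, $\mathcal{O}_X(-m-1) = \mathcal{O}_X(1)^{\vee\otimes(m+1)}$ is the dual of an ample sheaf, hence anti-ample, so $X$ is Fano; and for $m=0$ we get $\omega_X \cong \mathcal{O}_X(-1)$, i.e. the given projective embedding is anticanonical.
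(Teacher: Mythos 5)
Your approach is essentially correct but takes a genuinely different and considerably longer route than the paper. The paper's proof is purely algebraic and two lines long: since $A_{T*\Delta_m} = A_T[x_0,\dots,x_m]$, it suffices to know (Stanley's theorem for Gorenstein Stanley--Reisner rings, cited to \cite[section 7]{sta:com}) that for a sphere $T$ the graded canonical module is $\omega_{A_T}\cong A_T$ with no shift, and then (cited to \cite[21.11]{eis:com}) that adjoining a polynomial variable shifts the canonical module by $-1$, giving $\omega_{A_{T*\Delta_m}}\cong A_{T*\Delta_m}(-m-1)$. Note that Stanley's theorem is exactly what encodes, in one stroke, both the pseudomanifold ridge-count you carry out by hand and the fact that $\omega_X$ is actually an invertible sheaf.

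Your geometric argument reproves this via adjunction and gluing, and the ridge arithmetic ($n$ interior ridges from $T$, $m+1$ boundary ridges from $\Delta_m$, giving twist $-(n+m+1)+n=-(m+1)$) is correct. However, two points deserve more care than you give them. First, the ``adjunction/residue description'' $\omega_X|_{X_i}\cong\omega_{X_i}(D_i)$ presupposes that $\omega_X$ is at least locally free along the codimension-one strata; this holds here because only two components meet at each ridge (the pseudomanifold property of the sphere again), but you invoke the formula before you have established anything of the kind. Second, ``the twists agree on overlaps, so they glue'' is the crux and is left as a gesture: having $\omega_X|_{X_i}\cong\mathcal O_{X_i}(-m-1)$ for every component does not a priori produce a global isomorphism $\omega_X\cong\mathcal O_X(-m-1)$ without a cocycle check, and your proposal to ``reduce to the link of a facet, where it is trivial'' would need to be carried out, since the charts $D_+(x_f)$ for $f$ a facet do not cover $X$. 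Both issues are repairable, so this is not a fatal gap, but the algebraic argument sidesteps them entirely and is what the paper does.
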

\begin{proof} Note that $A_{T*{\Delta_m}} = A_T[x_0, \dots ,
x_m]$. The canonical module $\omega_{A_T}$ of the Stanley-Reisner ring
$A_T$ equals $A_T$ as graded module, see \cite[section 7]{sta:com}. By
e.g. \cite[21.11]{eis:com}, it follows that
$\omega_{A_{T*{\Delta_m}}}=A_{T*{\Delta_m}}(-m-1)$.
\end{proof}

We will refer to such simplicial complexes as \emph{Fano
complexes}. In this paper we will be mostly interested in the special
situation when $T^2$ of the Stanley-Reisner ring of the combinatorial
sphere vanishes.
\begin{proposition}\label{cor:nonobstructed} If $\K$ is a
combinatorial sphere with $T^2_{A_{\K}} = 0$, then for the Fano
scheme $\PP(\K*\Delta_m)$, the obstruction space
$T_{\PP(\K*\Delta_m)/\PP^{m+n}}^2$ for the local Hilbert functor
vanishes. In particular, $\PP(\K*\Delta_m )$ is represented by a
smooth point in the corresponding Hilbert scheme.
\end{proposition}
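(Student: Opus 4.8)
The plan is to deduce the vanishing of the Hilbert-functor obstruction space from the vanishing of $T^2_{A_{\K}}$ together with the Fano property established in Proposition \ref{fanocomp}. First I would write $X = \PP(\K \ast \Delta_m) \subset \PP^{m+n}$, where $n$ is chosen so that $A_{\K}$ lives in $\CC[x_0,\dots,x_n]$ and hence $A_{\K \ast \Delta_m} = A_\K[x_0,\dots,x_m]$ sits in a polynomial ring in $m+n+1$ variables. The standard exact sequence relating the local Hilbert functor to deformations of the abstract scheme is
$$
T^1_X \to T^2_{X/\PP^{m+n}} \to H^1(X, \mathcal{N}_{X/\PP^{m+n}})\, ,
$$
or more precisely the long exact sequence coming from the triangle of cotangent complexes for $X \hookrightarrow \PP^{m+n}$; the term controlling obstructions for embedded deformations is governed by $T^2_X$ and $H^i(X,-)$ of the sheaves appearing in that triangle. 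So the core of the argument is: (i) $T^2_X = 0$, and (ii) the relevant higher cohomology groups on $\PP^{m+n}$ pulled back to $X$ vanish because $X$ is projectively normal with appropriate negativity.

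For step (i), I would use Proposition \ref{tensor}: since $A_{\K \ast \Delta_m} = A_\K \otimes_\CC \CC[x_0,\dots,x_m]$ and $\CC[x_0,\dots,x_m]$ is smooth (so all its $T^i$ vanish for $i \ge 1$), the split exact sequences of that proposition give $T^2_{A_{\K \ast \Delta_m}} \cong T^2_{A_\K} \otimes_\CC \CC[x_0,\dots,x_m] = 0$. Thus the affine cone, and hence every local ring of $X$, is unobstructed, and the global $T^2_X$ (computed from the local-to-global spectral sequence together with $H^j(X, \mathcal{T}^1_X)$) receives contributions only from sheaf cohomology of $\mathcal{T}^0$ and $\mathcal{T}^1$. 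Here I would invoke the Hochster-type computation quoted in the excerpt, $H^p(X,\CO_X) \cong H^p(\K \ast \Delta_m;\CC)$, and the fact that $\K \ast \Delta_m$ is contractible (a join with a simplex), so the structure sheaf cohomology is concentrated in degree $0$; combined with the grading (the obstruction space for the Hilbert functor only sees non-negative, in fact here non-positive, twists by $\omega_X = \CO_X(-m-1)$ from Proposition \ref{fanocomp}) this forces $T^2_{X/\PP^{m+n}} = 0$.

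The main obstacle — and the step I would spend the most care on — is assembling the precise exact sequence that identifies $T^2_{X/\PP^{m+n}}$ with a combination of $\bigoplus_\nu T^2_{A_\K,\nu}$ and the structure-sheaf cohomology $H^1(X,\CO_X)$, $H^2(X,\CO_X)$ in the correct degrees, and checking that all these pieces vanish. One must be careful that the obstruction theory of the \emph{local Hilbert functor} (embedded deformations) is what is wanted, not abstract deformations: the obstructions lie in $\Hom_{\CO_X}$-type groups fitting into the exact sequence $H^0(X, \mathcal{N}_{X/\PP^{m+n}}) \to T^1_X \to H^1(X, \mathcal{T}_{\PP^{m+n}}|_X) \to \dots$, whose obstruction analogue has $T^2_X$ as the crucial term once one knows $H^i(X, \mathcal{T}_{\PP^{m+n}}|_X)$ behaves well. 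Since $\mathcal{T}_{\PP^{m+n}}$ is a quotient of $\CO(1)^{\oplus(m+n+1)}$ and $X$ is arithmetically Cohen-Macaulay with $H^{>0}(X,\CO_X(t))$ vanishing for $t \ge 0$ (projective normality plus the cone being Cohen-Macaulay, as $A_\K$ is Cohen-Macaulay for $\K$ a sphere), those groups vanish, so the obstruction space collapses to (a graded summand of) $T^2_X$, which we showed is zero. Hence $X$ is a smooth point of the Hilbert scheme, as claimed. $\square$
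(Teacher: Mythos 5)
Your first step is exactly the paper's: apply Proposition~\ref{tensor} to the splitting $A_{\K*\Delta_m}\cong A_\K\otimes_\CC\CC[x_0,\dots,x_m]$ to conclude $T^2_{A_{\K*\Delta_m}}=0$. After that, however, your argument and the paper's diverge, and yours has a genuine gap.

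The paper finishes in one stroke by citing \cite[Proposition 5.4]{ac:def}, a comparison theorem asserting that for a Stanley-Reisner scheme of this kind the obstruction space $T^2_{\PP(\K*\Delta_m)/\PP^{m+n}}$ for the local Hilbert functor is precisely the degree-$0$ graded piece of $T^2_{A_{\K*\Delta_m}}$. Since the whole module vanishes, so does its degree-$0$ piece, and the proposition follows. You instead try to reconstruct such a comparison from scratch: you reduce $\mathcal{T}^2_X=0$ from vanishing of the local $T^2$'s, note via the local-to-global spectral sequence that $T^2_X$ (abstract obstructions) is then built from $H^2(X,\Theta_X)$ and $H^1(X,\mathcal{T}^1_X)$, and gesture at Hochster's theorem and contractibility of $\K*\Delta_m$. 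But the Hochster isomorphism controls $H^p(X,\CO_X)$, not $H^2(\Theta_X)$ or $H^1(\mathcal{T}^1_X)$, and you never actually prove either of those groups vanishes. Likewise, the phrase ``the obstruction space for the Hilbert functor only sees non-negative, in fact here non-positive, twists by $\omega_X$'' is not a well-posed statement: the obstruction space is a single vector space, and the fact that picks out the degree-$0$ graded piece of $T^2_A$ is exactly the Kleppe-type comparison theorem you would need to invoke or prove. The Fano property from Proposition~\ref{fanocomp} plays no role in the paper's proof, and importing it here adds confusion rather than content.

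In short, the two inputs that would close your sketch --- $H^2(\Theta_X)=H^1(\mathcal{T}^1_X)=0$ and the identification of $T^2_{X/\PP^{m+n}}$ with $(T^2_{A_{\K*\Delta_m}})_0$ --- are precisely what \cite[Proposition 5.4]{ac:def} packages, and neither is established in your proposal. The ingredients you name (arithmetic Cohen--Macaulayness from Reisner's criterion, the Euler sequence, vanishing of $H^{>0}(X,\CO_X(t))$ for $t\ge0$) are indeed the raw material for a proof of that comparison theorem, so your intuition is pointing in the right direction, but as written the chain of implications does not go through. The fix is to replace the last two paragraphs by a direct appeal to the cited comparison result.
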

\begin{proof} Since $A_{\K*\Delta_m}$ is just the tensor product over
$\CC$ of $A_\K$ with a polynomial ring, $T_{A_{\K*\Delta_m}}^2$
vanishes as well, see Proposition~\ref{tensor}.  The claim then
follows from \cite[Proposition 5.4]{ac:def} which states among other
things that in this case $T_{\PP(\K*\Delta_m)/\PP^{m+n}}^2$ is the
degree $0$ part of $T_{A_{\K*\Delta_m}}^2$.
\end{proof}

We now turn our attention to degenerations of smooth Fano varieties to toric
varieties. Consider some lattice $M$ and some lattice polytope
$\nabla\subset M_\QQ$ in the associated $\QQ$-vector space. By
$\PP(\nabla)$ we denote the toric variety
$$
\PP(\nabla)=\Proj \CC[S_\nabla]
$$
where $S_\nabla$ is the semigroup in $M\times \ZZ$ generated by the
elements $(u,1)$, $u\in \nabla \cap M$. By Theorem 8.3 and Corollary
8.9 of \cite{st:gr}, square-free initial ideals of the toric ideal of
$\PP(\nabla)$ are exactly the Stanley-Reisner ideals of unimodular
regular triangulations of $\nabla$, see loc. cit. for definitions.

\begin{proposition}\label{cor:toricdegen} Let $V \subseteq \PP^N$ be a
smooth Fano variety which is the generic point on its component in the
Hilbert scheme of $\PP^N$. Let $\K$ be a combinatorial sphere with
$T^2_{A_{\K}} = 0$ and assume $V$ degenerates to $\PP(\K \ast
\Delta_m)$. If $\nabla$ is a lattice polytope having a unimodular
regular triangulation of the form $\K \ast \Delta_m$, then $V$
degenerates to $\PP(\nabla)$.
\end{proposition}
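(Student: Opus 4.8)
The plan is to deduce the degeneration of $V$ to $\PP(\nabla)$ by placing $\PP(\nabla)$, $V$, and $\PP(\K\ast\Delta_m)$ as points of one and the same Hilbert scheme and then exploiting the smoothness of the Hilbert point of $\PP(\K\ast\Delta_m)$ provided by Proposition~\ref{cor:nonobstructed}.

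First I would produce the toric degeneration of $\PP(\nabla)$ to $\PP(\K\ast\Delta_m)$. By the cited Theorem~8.3 and Corollary~8.9 of \cite{st:gr}, the unimodular regular triangulation of $\nabla$ whose Stanley--Reisner complex is $\K\ast\Delta_m$ corresponds to a weight vector $w$ for which the initial ideal $\init_w$ of the toric ideal of $\PP(\nabla)$ equals $I_{\K\ast\Delta_m}$. The standard $w$-homogenization then yields a flat family over $\mathbb A^1$, inside $\PP^N\times\mathbb A^1$, with general fiber $\PP(\nabla)$ and special fiber $\Proj\big(\CC[x_u]/I_{\K\ast\Delta_m}\big)=\PP(\K\ast\Delta_m)$. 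Here $N=\#(\nabla\cap M)-1$; since a unimodular triangulation has all lattice points of $\nabla$ among its vertices, $N+1=\#V(\K\ast\Delta_m)$, which is exactly the number of coordinates of the ambient space in which $V$ is assumed to degenerate to $\PP(\K\ast\Delta_m)$. So $V$, $\PP(\nabla)$, and $\PP(\K\ast\Delta_m)$ all determine points of the same Hilbert scheme $\Hilb$ of $\PP^N$, and both $[\PP(\nabla)]$ and $[V]$ admit a one-parameter specialization to $[\PP(\K\ast\Delta_m)]$.

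Next I would invoke unobstructedness. Since $T^2_{A_\K}=0$, Proposition~\ref{cor:nonobstructed} tells us $[\PP(\K\ast\Delta_m)]$ is a smooth point of $\Hilb$, hence lies on a unique irreducible component $H$. Because $[\PP(\nabla)]$ and $[V]$ each lie in an irreducible closed subset of $\Hilb$ containing $[\PP(\K\ast\Delta_m)]$, both lie on $H$. The hypothesis that $[V]$ is the generic point of its component, together with $[V]\in H$, then forces that component to equal $H$; consequently $[\PP(\nabla)]\in H=\overline{\{[V]\}}$, i.e.\ $[\PP(\nabla)]$ is a specialization of $[V]$. Realizing this specialization by a discrete valuation ring dominating the local ring of $H$ at $[\PP(\nabla)]$ with the same fraction field and pulling back the universal family over $\Hilb$, I obtain a flat family with general fiber $V$ and special fiber $\PP(\nabla)$ — the asserted degeneration.

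The substantive inputs here, namely the dictionary between unimodular regular triangulations and square-free Gröbner degenerations and the vanishing of the obstruction space, are already available from \cite{st:gr} and Proposition~\ref{cor:nonobstructed}; so the only real work, and the main thing requiring care, is the bookkeeping: verifying that the two flat families genuinely sit inside one and the same $\PP^N$ with the same Hilbert polynomial (so that comparing Hilbert points is legitimate), and translating ``generic point of its component'' together with ``specializes to'' into an honest one-parameter degeneration over a discrete valuation ring. I do not expect any deeper obstacle.
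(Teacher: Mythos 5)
Your proposal is correct and follows essentially the same route as the paper: degenerate $\PP(\nabla)$ to $\PP(\K\ast\Delta_m)$ via the Gr\"obner/triangulation dictionary, use Proposition~\ref{cor:nonobstructed} to place $[\PP(\K\ast\Delta_m)]$ on a unique smooth component $H$, and conclude from genericity of $[V]$ that $[\PP(\nabla)]$ specializes from $[V]$. The paper's proof is the same argument in compressed form; your version merely spells out the bookkeeping (matching ambient spaces and realizing the specialization over a DVR) that the paper leaves implicit.
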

\begin{proof} Now $\PP(\nabla)$ degenerates to $\PP(\K*\Delta_m)$ and
$\PP(\K*\Delta_m)$ is represented by a smooth point on the
corresponding Hilbert scheme (Proposition
\ref{cor:nonobstructed}). Since $V$ is represented by a general point
on the same component, $\PP(\nabla)$ must deform to $V$.
\end{proof}

\begin{remark}The toric varieties $\PP(\nabla)$ appearing in the above
proposition are quite special, since they are unobstructed.
\end{remark}

Note that if $V$ is a smooth Fano variety with very ample
anticanonical divisor, $V$ is a smooth point on a single irreducible
component of the relevant Hilbert scheme, cf. \cite[Proposition
2.1]{ci:hil}. A generic point on that component will be a smooth Fano
variety, to which we may apply \ref{cor:toricdegen}.

In this paper the smooth Fano varieties that appear are linear
sections of rational homogeneous manifolds. Recall that a rational
homogeneous manifold is of the form $G/P$ for a complex semi-simple
Lie group $G$ and parabolic $P$. Rational homogeneous manifolds are
Fano and rigid, i.e.\ $H^1(\Theta) = 0$, \cite[Theorem VII]{bo:ho}, and we may
use this to show that general sections are generic points on their
component in the Hilbert scheme of $\PP^N$.

For schemes $X \subseteq V$ let $\Def_{X/V}$ be the functor of
embedded deformations of $X$ in $V$. The forgetful functor $\Def_{X/V}
\to \Def_X$ is smooth if $T^1_V(\mathcal{O}_X) = 0$ and if  $V$ is
smooth this is $H^1(X,(\Theta_V)_{|X})$. 

\begin{proposition}\label{forget} Let $V$ be a subvariety of $\PP^n$
such that $H^1(V,\Theta_V)=0$ and such that Serre duality holds with
dualizing sheaf $\CO_V(-i)$. Let $X$ be a general complete
intersection of $V$ defined by $r$ forms of degree $m_k$ on $\PP^N$
with $\sum_{k=1}^r m_k <i$. Then $H^1(X,(\Theta_V)_{|X})=0$.
\end{proposition}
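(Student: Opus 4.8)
The plan is to compute $H^1(X,(\Theta_V)_{|X})$ by restricting along the Koszul resolution of $\CO_X$ on $V$ and chasing the resulting long exact sequences, converting everything into vanishing statements about cohomology of $(\Theta_V)(-d)$ for suitable twists $d$, which will follow from Serre duality once $d$ is large enough. Write $\mcL_k=\CO_V(-m_k)$ and let $X=V\cap H_1\cap\dots\cap H_r$ where $H_k$ is the zero locus of a general form of degree $m_k$; since $X$ is a \emph{general} such complete intersection, the $H_k$ may be chosen so that at each stage the partial intersection is again a variety (reduced, and of the expected codimension) and the relevant sections are regular, so we get the Koszul complex
$$ 0\to \Theta_V\otimes\textstyle\bigwedge^r\mcE^\vee \to \dots \to \Theta_V\otimes\mcE^\vee \to \Theta_V \to (\Theta_V)_{|X}\to 0, $$
where $\mcE=\bigoplus_k \CO_V(m_k)$, i.e. $\bigwedge^p\mcE^\vee=\bigoplus_{|S|=p}\CO_V(-\sum_{k\in S}m_k)$.

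Next I would break this long complex into short exact sequences in the usual way (letting $Z_p$ be the $p$-th syzygy sheaf) and take cohomology. Tracking the indices, to conclude $H^1((\Theta_V)_{|X})=0$ it suffices to know $H^1(V,\Theta_V)=0$ together with $H^{p+1}(V,\Theta_V\otimes\bigwedge^p\mcE^\vee)=0$ for all $p\ge 1$, i.e.
$$ H^{p+1}\Big(V,\ \Theta_V(-\textstyle\sum_{k\in S}m_k)\Big)=0\quad\text{for every }\emptyset\ne S\subseteq\{1,\dots,r\}. $$
The first of these is the hypothesis $H^1(V,\Theta_V)=0$. For the others, apply Serre duality on $V$ with dualizing sheaf $\CO_V(-i)$: $H^{p+1}(V,\Theta_V(-d))$ is dual to $H^{\dim V-p-1}(V,\Omega^1_V(d)\otimes\CO_V(-i))=H^{\dim V-p-1}(V,\Omega^1_V(d-i))$. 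Since $d=\sum_{k\in S}m_k\le\sum_{k=1}^r m_k<i$ by hypothesis, the twist $d-i$ is strictly negative, and for $p\ge 1$ the cohomological degree $\dim V-p-1$ is $\le\dim V-2$, i.e. neither $0$ nor $\dim V$; one expects such groups to vanish, and this is the crux of the argument.

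The main obstacle is exactly this last vanishing: $\Omega^1_V$ of an arbitrary subvariety $V\subseteq\PP^n$ need not be nice, so $H^j(V,\Omega^1_V(\text{negative twist}))$ is not automatically zero for intermediate $j$. The honest fix is to recall that $V$ here is a rational homogeneous manifold (or a rigid Fano with $H^1(\Theta_V)=0$ for which one has the needed Bott/Kodaira–Nakano–type vanishing), so that Kodaira–Nakano vanishing (or Bott vanishing on $G/P$) gives $H^j(V,\Omega^1_V\otimes N)=0$ for $N$ anti-ample and $i+j<\dim V$—which covers all the terms with $p\ge1$. Thus after assembling the Koszul long exact sequences, the proof reduces to one invocation of the hypothesis $H^1(V,\Theta_V)=0$ and finitely many invocations of Serre duality plus this vanishing theorem; the genericity of $X$ is used only to guarantee that the Koszul complex in question is exact, so that no higher $\Tor$ terms intrude.
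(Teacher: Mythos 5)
Your proof is correct and follows essentially the same route as the paper: Koszul-resolve, reduce to $H^{p+1}(V,\Theta_V(-d))$ for $d<i$, apply Serre duality with $\omega_V=\CO_V(-i)$, and finish with Kodaira--Akizuki--Nakano vanishing. The paper presents this by first passing through $0\to\I_X\to\CO_V\to\CO_X\to 0$ to reduce to $H^2(V,\I_X\otimes\Theta_V)=0$ and then Koszul-resolving $\I_X$, but that is the same computation; your remark that the final vanishing step implicitly requires $V$ smooth (so Nakano vanishing applies) is accurate and matches the paper's intended application to rational homogeneous manifolds.
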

\begin{proof} Let $\I_X$ be the ideal sheaf of $X$ in $V$ and consider
the exact sequence of sheaves
$$
0 \to \I_X \to \CO_V \to \CO_X \to 0.
$$
After tensoring with $\Theta_V$ and passing to the long exact sequence
of cohomology, we see that the vanishing of $H^1(X,(\Theta_V)_{|X})$
follows from the vanishing of $H^1(V,\Theta_V)$ (which we have by
assumption) and the vanishing of $H^2(V,\I_X\otimes\Theta_V)$. We now
show the vanishing of the latter.

Let $\mathcal{F} = \bigoplus_{k=1}^r \CO_V(-m_k)$. Since
$X\hookrightarrow V$ is a complete intersection, we have a resolution
of $\I_X$ by the Koszul complex
\begin{equation*}
      \begin{CD} 0 @>>> \bigwedge^r \mathcal{F} @>d_r>> \dotsm @>d_3>>
\bigwedge^2\mathcal{F} @>d_2>>\mathcal{F} @ >d_1>> \I_X @>>> 0
      \end{CD}
\end{equation*} which we can split into short exact sequences
\begin{equation*}
      \begin{CD} 0 @>>> \I_{j} @>>> \bigwedge^j\mathcal{F} @>>>
\I_{j-1} @>>> 0
      \end{CD}
\end{equation*} with $\I_0:=\I_X$ and $\I_j:=\ker d_j$. We show that
$H^p(V,\I_j\otimes\Theta_V)=0$ for $p>1$ by induction on $j$. Indeed,
$H^p(V,\I_r\otimes\Theta_V)=0$ since $\I_r=0$. Suppose now that
$H^p(V,\I_j\otimes\Theta_V)=0$ for some $j$ and all $p>1$. Then from
the long exact sequence of cohomology, we have
$$H^p(V,\I_{j-1}\otimes\Theta_V)\cong
H^p\left(V,\bigwedge^j\mathcal{F} \otimes\Theta_V\right).$$ But
$\bigwedge^j\mathcal{F}\otimes\Theta_V$ is a direct sum of vector
bundles of the form $\CO(-l)\otimes \Theta_V$ with $l < i$, and by
Serre Duality and Kodaira vanishing, we have
$$
H^p\left(V,\CO(-l)\otimes\Theta_V\right)\cong
H^{n-p}\left(V,\CO(l-i)\otimes\Omega_V\right)=0
$$
where $n$ is the dimension of $V$.
\end{proof}

\begin{corollary} \label{homspace} Let $V$ be a rational homogeneous
manifold embedded in $\PP^N$ such that $\omega_V = \CO_V(-i_V)$ where
$i_V$ is the Fano index of $V$. If $X$ is a smooth complete
intersection of $V$ defined by $r$ general forms of degree $m_k$ on $\PP^N$
with $\sum_{k=1}^r m_k <i_V$, then $X$ is Fano and a generic point on its
component in the Hilbert scheme of $\PP^N$.
\end{corollary}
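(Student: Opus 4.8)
The plan is to deduce this corollary directly from Proposition~\ref{forget} together with standard facts about rational homogeneous manifolds. First I would verify the hypotheses of Proposition~\ref{forget}: a rational homogeneous manifold $V=G/P$ is smooth, has $H^1(V,\Theta_V)=0$ by \cite[Theorem VII]{bo:ho} (cited in the text), and satisfies Serre duality with dualizing sheaf $\omega_V=\CO_V(-i_V)$ by hypothesis. Hence with the degree condition $\sum_{k=1}^r m_k < i_V$, Proposition~\ref{forget} applies and gives $H^1(X,(\Theta_V)_{|X})=0$. As remarked in the text immediately before Proposition~\ref{forget}, the vanishing of $T^1_V(\CO_X)=H^1(X,(\Theta_V)_{|X})$ makes the forgetful functor $\Def_{X/V}\to\Def_X$ smooth.

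Next I would show $X$ is Fano. By adjunction for the complete intersection $X\subseteq V$ cut out by forms of degrees $m_1,\dots,m_r$, we have $\omega_X\cong \omega_V\otimes\CO_X\big(\sum_k m_k\big)\cong\CO_X\big(\sum_k m_k - i_V\big)$, and since $\sum_k m_k - i_V<0$ the anticanonical bundle $\omega_X^{-1}$ is ample (being the restriction of an ample line bundle on $\PP^N$), so $X$ is Fano.

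Finally I would argue that $X$ is a generic point on its Hilbert scheme component. Embedded deformations of $X$ in $\PP^N$ that keep $X$ inside $V$ are exactly $\Def_{X/V}$; by the smoothness of $\Def_{X/V}\to\Def_X$ just established, every abstract deformation of $X$ is realized by such an embedded-in-$V$ deformation, and since $V$ itself is rigid in $\PP^N$ (it moves only by the group of projective automorphisms preserving it), the component of the Hilbert scheme of $\PP^N$ through $X$ is swept out by complete intersections of the same multidegree in translates of $V$. A general member of this family is a general complete intersection in $V$, which is smooth by Bertini; thus the general point of the component is of the same type as $X$, and in particular $X$ — being a \emph{general} such complete intersection by hypothesis — is a generic (smooth) point of that component.

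The main obstacle I expect is the last step: making precise that ``$X$ general in $V$'' together with ``$\Def_{X/V}\to\Def_X$ smooth'' and ``$V$ rigid'' forces $X$ to be a generic point of a \emph{single} Hilbert scheme component, rather than merely a smooth point. This requires knowing that the incidence family of pairs (complete intersection $X$, copy of $V$ in $\PP^N$) dominates a unique component, which follows because the parameter space for the copies of $V$ is irreducible and the complete intersections of fixed multidegree inside a fixed $V$ form an irreducible (open subset of a) linear system; combined with smoothness of the point this identifies the component uniquely. The Fano and vanishing parts are routine given Proposition~\ref{forget}.
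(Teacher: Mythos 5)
Your proposal matches the paper's own proof: verify the hypotheses of Proposition~\ref{forget} (using Bott's rigidity theorem for $H^1(V,\Theta_V)=0$ and the given dualizing sheaf), conclude that $\Def_{X/V}\to\Def_X$ is smooth, and deduce that every deformation of $X$ arises from moving the linear section in $V$, so a general section is a generic point on its Hilbert scheme component. The subtlety you flag in the final step is glossed over in the paper's one-sentence version as well, and the adjunction computation for the Fano claim is a routine addition the paper omits.
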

\begin{proof} From Proposition \ref{forget} we know that $\Def_{X/V}
\to \Def_X$ is smooth. In particular every deformation of $X$ arises
from moving the linear section in $V$. A general section is therefore
a generic point on the Hilbert scheme component.
\end{proof}

\section{Mukai varieties and deltahedra}\label{sec:tri}

We describe a series of triangulated $2$-spheres constructed
by means of edge starring. Let $T_4$ be the boundary complex of the
tetrahedron and $T_5$ the boundary complex of the triangular
bipyramid. For any $6 \leq n \leq 10$, define $T_n$ inductively to be
$\sta(f,T_{n-1})$ for any edge $f\in T_{n-1}$ whose link consists of
two vertices of valency four. This uniquely determines $T_n$.

 For
$6\leq n \leq 10$, these are 
exactly the triangulated spheres where the only vertex valencies are
$4$ or $5$. The list of $T_n$, $4 \le n \le 10$ coincides
with the boundary complexes of the convex deltahedra with 10 or less
vertices. Our rule cannot be applied to $T_{10}$, but we define $T_{11}$ to be
$\sta(f,T_{10})$ for any edge $f\in T_{10}$ whose link has one
valency-four vertex. These triangulations are pictured in Figure
\ref{fig:triangulations} projected from a vertex at infinity.  The
edges in which we star a vertex are the dashed line segments.

\begin{figure}
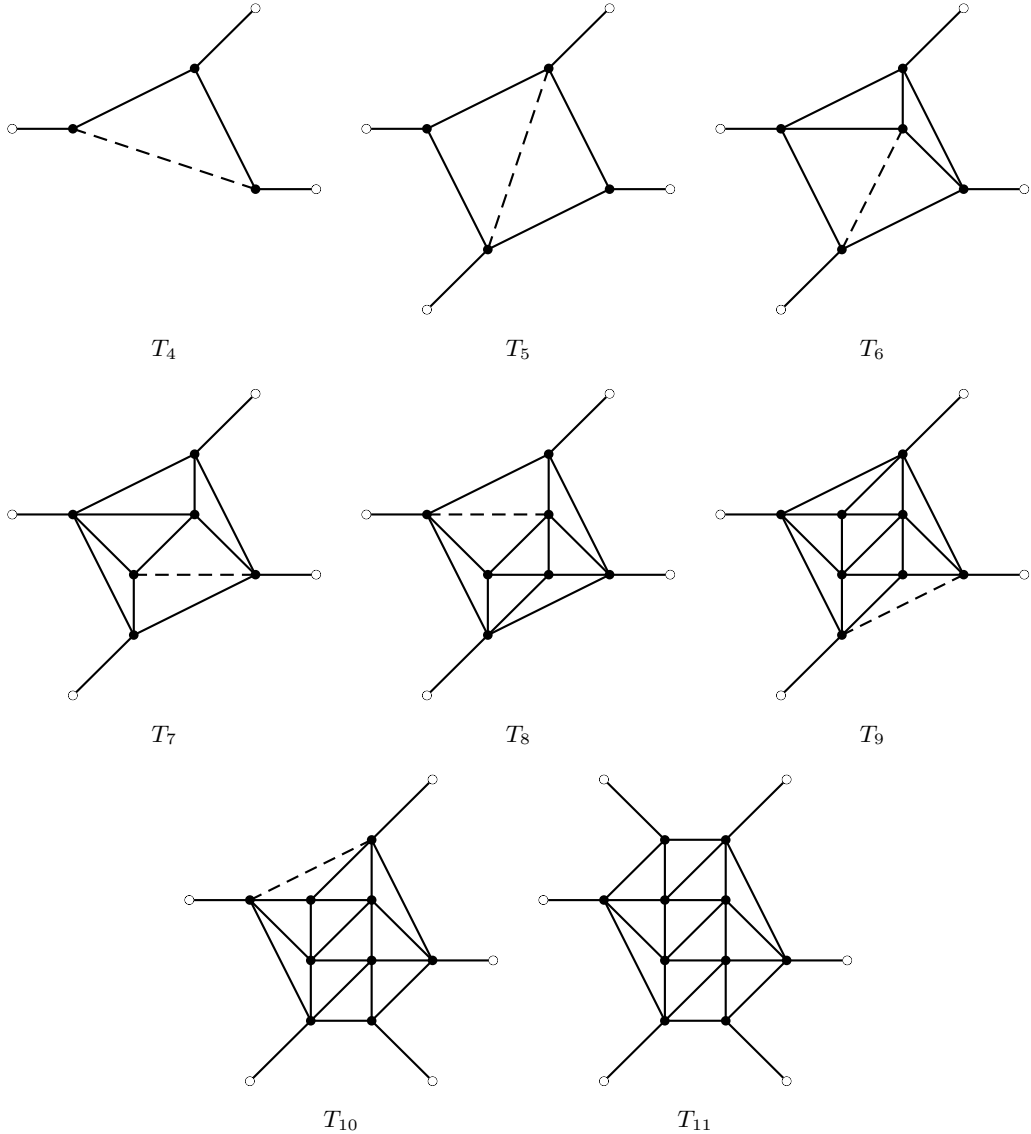
 
	\begin{center}
\subfloat[$T_{4}$]{\trifour}
\subfloat[$T_{5}$]{\trifive}
\subfloat[$T_{6}$]{\trisix}\\
\subfloat[$T_{7}$]{\triseven}
\subfloat[$T_{8}$]{\trieight}
\subfloat[$T_{9}$]{\trinine}\\
\subfloat[$T_{10}$]{\triten}
\subfloat[$T_{11}$]{\trieleven}
\end{center}	
\caption{Triangulations of the sphere
$T_n$}\label{fig:triangulations}
\end{figure}

The associahedra appear among the $T_n$. We have  $T_9 = \A_6$, the
octahedron boundary $T_6 = \A_4 \ast \A_4 \ast \A_4$ and the pentagonal
bipyramid boundary $T_7 = \A_4 \ast \A_5$.

When $4 \le n \le 10$ (the deltahedra case), $T_n$ is on the list of
triangulated $2$-spheres $T$ with $T_{A_{T}}^2=0$ classified in
\cite[Corollary 2.5]{iso:tor}. This may also be proven directly using
the results in Section \ref{flag}. For $\K = T_{11}$, \cite[Theorem
5.6]{ac:def} tells us that $\dim T_{A_\K, 0}^2=3$. 

\begin{theorem}\label{thm:degen} There is a flat degeneration of
$M_g$ to $\PP(T_{g+1}*\Delta_{i_g})$ for any $6\leq g\leq 10$ where
$i_g$ is one less than the Fano index of $M_g$, i.e. $i_6=i_{10}=2$,
$i_7=7$, $i_8=5$, and $i_9=3$.
\end{theorem}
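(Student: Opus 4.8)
The plan is to realize each $M_g$ explicitly via its Mukai model as a subvariety (or linear section of a homogeneous space) living in a projective space, and then to exhibit a one-parameter flat family whose special fiber is the Stanley-Reisner scheme $\PP(T_{g+1}\ast\Delta_{i_g})$. Concretely, I would first record the embedding data: by Mukai's description (the table in the introduction) $M_g$ for $6\le g\le 10$ is $Q_2\cap G(2,5)$, $SO(5,10)$, $G(2,6)$, $LG(3,6)$, $\GG_2$ respectively, with its minimal anticanonical-type embedding in some $\PP^N$; and one computes $N$ and the Fano index $i_V$ in each case, checking that $i_g = i_V-1$ matches the claimed values $i_6=i_{10}=2$, $i_7=7$, $i_8=5$, $i_9=3$. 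Then one must check the numerical consistency: $\PP(T_{g+1}\ast\Delta_{i_g})$ should have the right dimension (namely $\dim M_g$, which is $2 + i_g$ since $T_{g+1}$ is a $2$-sphere and we cone off $i_g+1$ times, giving dimension $2 + (i_g+1) = i_g+3$... so one must match this against $\dim M_g$) and the right degree (the number of facets of $T_{g+1}$, which should equal $\deg M_g = 2g-2$; indeed $T_{g+1}$ has $g+1$ vertices and hence $2(g+1)-4 = 2g-2$ facets by Euler's formula for a triangulated $2$-sphere).

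The heart of the argument is constructing the degeneration itself. The natural approach is Gröbner: for each $g$, pick coordinates on the ambient $\PP^N$ so that the homogeneous ideal $I_{M_g}$ of the Mukai model admits a weight vector $w$ for which the initial ideal $\init_w(I_{M_g})$ equals the Stanley-Reisner ideal $I_{T_{g+1}\ast\Delta_{i_g}}$; then the Rees-algebra / one-parameter subgroup construction gives a flat family over $\mathbb{A}^1$ with generic fiber $M_g$ and special fiber $\PP(T_{g+1}\ast\Delta_{i_g})$. For the Grassmannian cases $G(2,5)$, $G(2,6)$ this is a Plücker-coordinate computation: the Plücker ideal has well-understood square-free initial ideals (Sturmfels), and one must verify that the relevant triangulation of the Grassmannian's flag polytope restricts, after taking the linear section $V_{2g-2}\subseteq M_g$ or directly for $M_g$, to the cone over $T_{g+1}$. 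For $SO(5,10)$, $LG(3,6)$, $\GG_2$ one uses the explicit quadratic equations of the spinor/Lagrangian-Grassmannian/$G_2$-variety in their minimal embeddings and produces the term order by hand. In practice I would treat the cases one at a time, exploiting the inductive edge-starring structure: since $T_{g+1} = \sta(f, T_g)$, a degeneration of $M_{g-1}$ might be "upgraded" to one of $M_g$, though the ambient spaces change, so this is more a guiding heuristic than a uniform proof.

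An alternative, and perhaps cleaner, route avoids writing down each $M_g$ from scratch: one can instead use that $M_{10}=\GG_2$ has the smallest-dimensional story and that the chain $M_6 \supseteq \cdots$ via linear sections together with Corollary \ref{homspace} lets one transport degenerations along the Mukai series. Still, at some point an explicit initial-ideal computation is unavoidable. To keep the flatness rigorous I would invoke the standard fact that $\init_w(I)$ is a flat degeneration (the associated graded ring is free over $k[t]$), and to identify the special fiber I would check that $\init_w(I_{M_g})$ is radical with the prescribed square-free generators — since a square-free monomial ideal is automatically the Stanley-Reisner ideal of its associated complex, it then suffices to match the minimal non-faces with the facet list of $T_{g+1}\ast\Delta_{i_g}$ read off from Figure \ref{fig:triangulations}.

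The main obstacle I anticipate is the non-uniformity: there is no single homogeneous-space framework covering all of $G(2,5), SO(5,10), G(2,6), LG(3,6), \GG_2$, so the Gröbner/initial-ideal verification must be done separately in each case, and the spinor variety $SO(5,10)$ and the $G_2$-variety $\GG_2$ have the least standardized combinatorics of their coordinate rings — finding the correct weight vector there, and checking that the resulting monomial ideal is exactly $I_{T_{g+1}\ast\Delta_{i_g}}$ rather than some other square-free ideal, is where the real work lies. A secondary subtlety is bookkeeping the simplex factor $\Delta_{i_g}$: the "extra" variables must appear in the coordinate ring of $M_g$ in a way that is genuinely a polynomial extension of the core cone over $T_{g+1}$, i.e. one must identify which linear coordinates on $\PP^N$ survive to the special fiber as free variables, matching $i_g+1 = i_V$ of them.
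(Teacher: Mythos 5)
The overall strategy you describe---degenerate $M_g$ via a term order on the ambient polynomial ring so that the initial ideal of $I_{M_g}$ becomes the Stanley--Reisner ideal of $T_{g+1}\ast\Delta_{i_g}$, proceeding case by case over $G(2,5)$, $SO(5,10)$, $G(2,6)$, $LG(3,6)$, $\GG_2$---is exactly the one the paper takes. Your numerical checks (dimension $i_g+3 = \dim M_g$, degree $= 2g-2 = $ number of facets of $T_{g+1}$ by Euler's formula) and the circular-order idea for the Grassmannian cases via Sturmfels's \cite[Proposition 3.7.4]{st:alg} are also correct. But your plan has a genuine gap: you never explain \emph{how} to certify that a natural generating set $\mathcal{G}$ of $I_{M_g}$ (Pfaffians, spinor equations, etc.) is actually a Gr\"obner basis for the chosen order. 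Saying you would ``check that $\init_\succ(I_{M_g})$ is radical with the prescribed square-free generators'' presupposes you already know $\init_\succ(I_{M_g})$, and computing it from scratch by S-pair reduction is a large and ugly computation for $SO(5,10)$, $LG(3,6)$, and $\GG_2$. The paper's key technical tool is precisely the lemma that makes this verification tractable: the Sturmfels--Zelevinsky criterion (Proposition \ref{stzel}, from \cite{sz:max}), which says that if the initial terms of $\mathcal{G}$ are square-free monomials whose associated simplicial complex is pure of the correct dimension with at most $\deg (P/I)$ facets, then $\mathcal{G}$ is automatically a Gr\"obner basis. This converts the entire Gr\"obner verification into the degree-and-dimension coincidence you already noticed. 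Without some such mechanism your proposal describes a plausible picture but does not constitute a proof.

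Two smaller points. First, your ``cleaner alternative'' of transporting degenerations along a chain of linear sections among the Mukai varieties is not viable: $G(2,5)$, $SO(5,10)$, $G(2,6)$, $LG(3,6)$, $\GG_2$ are not linear sections of one another, so there is no such chain; Corollary \ref{homspace} is used in the paper to pass from $M_g$ to its own linear sections (Corollary \ref{cor:secdegen}), not between different $M_g$'s. Second, the ``inductive edge-starring'' heuristic you mention does not help here for the same reason---the ambient homogeneous spaces change type entirely between consecutive $g$. What does carry over from case to case is a single organizing principle: use a circular-type order on the Pl\"ucker-like subalgebra and then extend it to the remaining variables in a compatible way; this is what the paper does for $g=7,9,10$ after treating the Grassmannian cases.
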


We wish first to describe the method of proof. Given an ideal $I$ in a
polynomial algebra $P$ and a term order $\succ$, let $\init_\succ(I)$
be the initial ideal of $I$. There is a flat degeneration of $P/I$ to
$P/\init_\succ(I)$, see e.g. \cite[Chapter 15]{eis:com}, so we
want to find a term order such that $\init_\succ(I_{M_{g}})$ is the
Stanley-Reisner ideal of $T_{g+1}*\Delta_{i_g}$.

In \cite[Proposition 3.7.4]{st:alg} Sturmfels shows that there is a
term order for which the ideal of $G(2,n)$ in the Pl{\"u}cker
embedding has initial ideal equal to the Stanley-Reisner ideal of
$\A_n * \Delta_{n-1}$. He calls this order a \emph{circular order} and
variants of this circular order will be used throughout the proof.

Generators for $I_{M_{g}}$ may be found in the literature, we give
references in the proof. Assume we have found a term order such that
the ideal generated by the initial terms of these generators is the
Stanley-Reisner ideal of $T_{g+1}$. We may then invoke the following
useful result of Sturmfels and Zelevinsky.

Let now $I$ be a homogeneous ideal of degree $d$ in $P= \CC[x_0, \dots
, x_n]$ with $\dim P/I = r+1$ and $\mathcal{G} \subset I$ a finite
subset. Suppose the set $\{\init_\succ(g): g \in \mathcal{G}\}$
consists of \emph{square-free} monomials $x_{q_1}, \dots ,x_{q_s}$,
$q_i \subseteq [n]$.

\begin{proposition} [\protect{\cite[Proposition
7.3]{sz:max}}]\label{stzel} If all minimal (with respect to inclusion)
transversal subsets to $\{q_{1}, \dots , q_s\}$ have the same
cardinality $n-r$, and their number is less than or equal to $d$, then
$\mathcal{G}$ is a Gr{\"o}bner basis with respect to $\succ$.
\end{proposition}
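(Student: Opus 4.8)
The plan is to compare the monomial ideal $J := \langle\, \init_\succ(g) : g\in\mathcal{G}\,\rangle = \langle x_{q_1},\dots,x_{q_s}\rangle$ with $J' := \init_\succ(I)$. Since each $g\in\mathcal{G}$ lies in $I$, its leading term lies in $\init_\succ(I)$, so $J\subseteq J'$; and $\mathcal{G}$ is a Gr\"obner basis of $I$ exactly when $J=J'$ (the equality $\langle\init_\succ(\mathcal{G})\rangle=\init_\succ(\langle\mathcal{G}\rangle)$ automatically forces $\langle\mathcal{G}\rangle=I$ as well). So the whole statement reduces to proving $J=J'$, and I would do this by a Hilbert-polynomial (dimension/degree) comparison that exploits the fact that $J$ is a squarefree monomial ideal whose minimal primes are governed by the transversal hypothesis.

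First I would collect the numerics. Passing to an initial ideal preserves the Hilbert function (the standard monomials form a $k$-basis of $P/I$ and of $P/J'$ simultaneously; cf.\ the flat degeneration of \cite[Chapter 15]{eis:com}), so $\dim P/J'=\dim P/I=r+1$ and $\deg P/J'=\deg P/I=d$. On the other side, the minimal primes of the monomial ideal $J=\langle x_{q_1},\dots,x_{q_s}\rangle$ are precisely the ideals $\langle x_i : i\in\tau\rangle$ as $\tau$ ranges over the minimal transversal subsets of $\{q_1,\dots,q_s\}$. By hypothesis every such $\tau$ has $|\tau|=n-r$, so every minimal prime of $J$ has height $n-r$; hence $J$ is unmixed with $\dim P/J=(n+1)-(n-r)=r+1$, and since $J$ is squarefree (so radical, and $P/J$ is reduced and equidimensional) its multiplicity counts the minimal primes: $\deg P/J=\#\{\text{minimal transversals}\}\le d$.

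Now I would combine the two sides. The inclusion $J\subseteq J'$ gives a short exact sequence $0\to J'/J\to P/J\to P/J'\to 0$. Suppose $J\neq J'$. Since $P/J$ is reduced and equidimensional of dimension $r+1$, all of its associated primes have dimension $r+1$, so the nonzero submodule $J'/J$ also has dimension $r+1$; additivity of Hilbert polynomials then yields $\deg P/J=\deg(J'/J)+\deg P/J'\ge 1+d$, contradicting $\deg P/J\le d$. Hence $J=J'$, i.e.\ $\mathcal{G}$ is a Gr\"obner basis with respect to $\succ$.

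The genuinely delicate point is this last step: one must be sure the degree bookkeeping is tight in both directions — that the transversal hypothesis pins $\deg P/J$ from above by $d$ (this needs $J$ squarefree \emph{and} unmixed of the correct dimension, so that multiplicity equals the number of minimal primes), and that a nonzero kernel $J'/J$ is forced to have full dimension $r+1$ and hence contributes positive multiplicity (this needs $P/J$ reduced and equidimensional). Everything else — the defining property of a Gr\"obner basis, invariance of the Hilbert function under Gr\"obner degeneration, and the dictionary between minimal primes of a squarefree monomial ideal and minimal vertex covers of its hypergraph — is standard.
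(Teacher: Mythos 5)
Your argument is correct. The paper does not prove this statement --- it is quoted from Sturmfels--Zelevinsky --- and your dimension/multiplicity comparison (both quotients have dimension $r+1$; $\deg P/J\le d=\deg P/J'$ since $P/J$ is reduced and equidimensional with each component a degree-one coordinate subspace; a nonzero $J'/J\subseteq P/J$ would have to contribute positive multiplicity in top dimension) is exactly the standard proof of that result, with all the delicate points (unmixedness, reducedness, additivity of Hilbert polynomials) correctly identified and justified.
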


To rephrase the result recall that a subset $p = \{i_1,\dots, i_k\}
\subseteq [n]$ is \emph{transversal} to $\{q_{1}, \dots , q_s\}$ if
there exists an injective map $f: \{1, \dots ,k\} \to \{1, \dots ,
s\}$ such that $i_j \in q_{f(j)}$. Clearly the minimal transversal
subsets to $\{q_{1}, \dots , q_s\}$ are in one to one correspondence
with the minimal prime ideals of $\langle x_{q_1}, \dots ,x_{q_s}
\rangle$.  Thus the proposition tells us that if $\langle x_{q_1},
\dots ,x_{q_s} \rangle$ is the Stanley-Reisner ideal of the simplicial
complex $\mathcal{K}$ with $n+1$ vertices, $\mathcal{K}$ is pure
$r$-dimensional, \emph{and} the number of facets of $\mathcal{K}$ is
less than or equal the degree of $I$, then $\mathcal{G}$ is a
Gr{\"o}bner basis. In particular $\langle x_{q_1}, \dots ,x_{q_s}
\rangle = \init_\succ(I)$.

\begin{proof}[Proof of Theorem \ref{thm:degen}] Clearly
$T_{g+1}*\Delta_{i_g}$ is pure dimensional and $\dim
T_{g+1}*\Delta_{i_g} = \dim M_g$. Moreover the degree of
$\PP(T_{g+1}*\Delta_{i_g})$ is the number of facets of $T_{g+1}$ which
is $2(g-1)$. (For any 2-sphere the number of facets is $2(\# \text{
vertices} -2)$ by the Euler formula.) This equals the degree of the
corresponding Fano 3-fold in $\mathbb{P}^{g+1}$. Our equations for the
Mukai varieties will be in $\mathbb{P}^{g+1 + i_g}$, so this will also
be the degree of the Mukai variety.

By the above remarks it is thus enough to give a set of generators
$\mathcal{G}$ and a term order $\succ$ such that $\{\init_\succ(g): g
\in \mathcal{G}\}$ are the generators of the Stanley-Reisner ideal of
$T_{g+1}$. We do this case by case.

\vspace{1ex}
\noindent {\bf Case $g=6$ and $g=8$: Grassmannians.}  We review the
argument in \cite[Proposition 3.7.4]{st:alg} for future
reference. Recall that the Grassmannian $G(2,n)$ is defined by the
ideal $I$ generated by the $4\times 4$ Pfaffians of an $n\times n$
antisymmetric matrix with coordinates
\begin{equation}\label{eqn:antisym} \left(\begin{array}{c c c c c}
0&x_{12}&x_{13}&\cdots&x_{1n}\\ -x_{12}&0&x_{23}&\cdots&x_{2n}\\
-x_{13}&-x_{23}&0&\cdots&x_{3n}\\ \vdots&\vdots&\vdots&\ddots&\vdots\\
-x_{1n}&-x_{2n}&-x_{3n}&\cdots&0
\end{array} \right)
\end{equation} A circular order $\prec_{\Circ}$ is any monomial order
which, for $1\leq i<j<k<l\leq n$, selects the monomial $x_{ik}x_{jl}$
as the lead term in the Pfaffian involving the rows and columns
$i,j,k,l$. Sturmfels showed that such terms orders exist and that the
Pfaffians form a Gr\"obner basis for them.  The initial ideal of $I$
is square-free, and corresponds to $\A_n * \Delta_{n-1}$. For $n=6$
this is the simplicial complex $T_9*\Delta_5$.

When $n=5$, $\A_5 * \Delta_{4}= C_5 * \Delta_4$, where $C_5$ is the
boundary of a pentagon. Now $M_6$ is defined by a general quadric in
$G(2,5)$. We can degenerate this quadric to $x_{\alpha}x_{\beta}$
where $x_{\alpha}, x_{\beta}$ do not appear in the monomials in the
initial ideal of the Pfaffians. The ideal generated by the initial
ideal of $I$ and this monomial is the ideal of $T_7*\Delta_2$.

\vspace{1ex}
\noindent {\bf Case $g=7$: $SO(5,10)$.} Equations for the orthogonal
Grassmannian $SO(5,10)$ can be found in
\cite{mu:cu} (see also \cite{rs:var}). Consider the polynomial ring $P$ in the variables
$u$, $x_{ij}$, and $y_k$ for $1\leq i < j \leq 5$, $1\leq k \leq
5$. Let $\Phi_{i}(x)$ denote the Pfaffian of the submatrix of
\eqref{eqn:antisym} for $n=5$ not involving the $i$th row and
column. Then the ideal of $SO(5,10)$ in $\PP^{15}$ is given by the
five equations of the form
$$
uy_i-(-1)^{i}\Phi_i(x)
$$ 
along with the five equations
\begin{equation*} \left(\begin{array}{c c c c c}
0&x_{12}&x_{13}&x_{14}&x_{15}\\ -x_{12}&0&x_{23}&x_{24}&x_{25}\\
-x_{13}&-x_{23}&0&x_{34}&x_{35}\\ -x_{14}&-x_{24}&-x_{34}&0&x_{45}\\
-x_{15}&-x_{25}&-x_{35}&-x_{45}&0
\end{array} \right)\cdot \left(\begin{array}{c} y_1\\ y_2\\ y_3\\
y_4\\ y_5
\end{array}\right)=0.
\end{equation*} Consider a circular monomial order on the variables
$x_{ij}$ as above, and expand this to any monomial order $\prec$ on
$P$ satisfying
$$u,y_2,y_3,y_4\prec y_1,y_5\prec x_{ij}.$$
Then the initial terms of the above ten equations are generators of
the ideal of $\PP(T_8*\Delta_7)$.

\vspace{1ex}
\noindent {\bf Case $g=9$: $LG(3,6)$.} Equations for the Lagrangian
Grassmannian $LG(3,6)$ can be found in \cite{ir:geo}. Consider
the polynomial ring $P$ in the variables $u$, $v$, $y_{ij}$, $z_{ij}$
for $1\leq i\leq j \leq 3$. Let $Y$ and $Z$ be the symmetric matrices
\begin{equation*} Y=\left(\begin{array}{c c c} y_{11}&y_{12}&y_{13}\\
y_{12}&y_{22}&y_{23}\\ y_{13}&y_{23}&y_{33}
\end{array}\right)\qquad Z=\left(\begin{array}{c c c}
z_{11}&z_{12}&z_{13}\\ z_{12}&z_{22}&z_{23}\\ z_{13}&z_{23}&z_{33}
\end{array}\right)
\end{equation*} and let $M_{i,j}(Y)$ respectively $M_{i,j}(Z)$ denote
the $(i,j)$th minor of $Y$ and $Z$. Then the ideal of $LG(3,6)$ in
$\PP^{13}$ is given by the 21 equations of the following form:
\begin{align*} (-1)^{i+j}M_{i,j}(Y)-vz_{ij}\qquad &1\leq i \leq j \leq
3\\ (-1)^{i+j}M_{i,j}(Z)-uy_{ij}\qquad &1\leq i \leq j \leq 3\\
Y_{i,\cdot}\cdot Z_{\cdot, i}-uv\qquad & 1\leq i \leq 3\\
Y_{i,\cdot}\cdot Z_{\cdot, j}\qquad & 1\leq i,j\leq 3,\quad i\neq j
\end{align*} Consider now any term order $\prec$ such that
$$
u,v,y_{13},z_{13}\prec y_{12},y_{23},z_{12},z_{23}\prec
y_{ii},z_{ii}\qquad i=1,2,3
$$
and the product of two monomials in the middle group is larger than
the product of a monomial from the right with a monomial from the
left. These conditions allow for freedom in the four comparisons
\begin{align*} y_{ii}z_{ij}\ \framebox{??}\ y_{ij}z_{jj}\qquad 0\leq
i,j\leq 3,\quad |i-j|=1.
\end{align*} Imposing any further conditions which resolve these four
comparisons completely determines the initial terms of the above 21
equations.

In fact, the 16 different possible ideals generated by these terms are
all Stanley-Reisner ideals coming from (different) triangulations of
the sphere with 10 vertices joined with $\Delta_3$; the triangulations
can be obtained by always choosing one of the two dotted diagonals in
each of the four quadrangles in Figure \ref{fig:lgflips}. Exactly two
of these triangulations are isomorphic to $T_{10}$. One possible way
to get $T_{10}$ is by imposing the additional condition
$x_{ii},y_{ii}\prec x_{jj},y_{jj}$ for $i<j$.

\begin{figure}
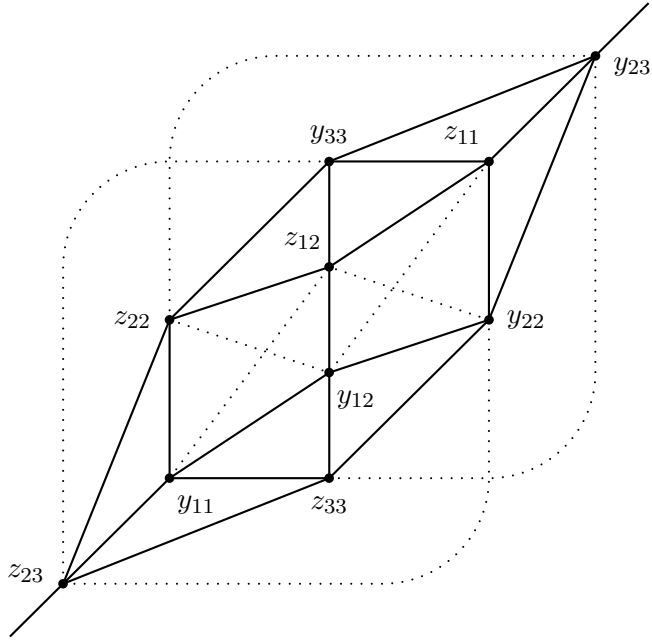

	\begin{center} \tritenflips
\end{center}
\caption{Triangulations coming from initial ideals for 
$LG(3,6)$}\label{fig:lgflips}
\end{figure}

\vspace{1ex}
\noindent {\bf Case $g=10$: $\GG_2$.} The $G_2$ Grassmannian can be
embedded in $G(2,7)$ as a linear section, see for example
\cite{ka:rel}. Let $P$ be the polynomial ring in variables
$r,u,w,x_{ij},y_{ij},z_i$ with $i,j\in\{0,1\}$. Then the ideal of
$\GG_2$ in $\PP^{13}$ is generated by the $4\times 4$ Pfaffians of the
matrix
\begin{equation*} \left(\begin{array}{c c c c c c c}
0&-x_{10}&x_{11}&w&y_{11}&y_{10}&u\\ x_{10}&0&-v&y_{00}&r&z_0&x_{00}\\
-x_{11}&v&0&y_{01}&z_1&-w-r&x_{01}\\
-w&-y_{00}&-y_{01}&0&x_{01}&-x_{00}&v\\
-y_{11}&-r&-z_1&-x_{01}&0&u&x_{11}\\
-y_{10}&-z_0&w+r&x_{00}&-u&0&x_{10}\\
-u&-x_{00}&-x_{01}&-v&-x_{11}&-x_{10}&0
\end{array}\right).
\end{equation*} Note that this is not a minimal generating set for the
ideal, it only needs 28 generators.

Consider any term order $\prec$ such that
$$
u,v\prec r,w,x_{ij}\prec y_{ij},z_i\qquad i,j\in\{0,1\}
$$
and the product of two monomials in the middle group is larger than
the product of a monomial from the right with a monomial from the
left. Similar to the $g=9$ case, these conditions allow for freedom in
the three comparisons
\begin{align*} x_{00}x_{11}\ \framebox{??}\ x_{01}x_{10}\\
x_{00}y_{01}\ \framebox{??}\ x_{01}y_{00}\\ x_{10}y_{11}\
\framebox{??}\ x_{11}y_{10}.
\end{align*} Imposing any further conditions which resolve these three
comparisons completely determines the ideals generated by initial
terms of the above 35 equations.

\begin{figure}
	\begin{center} \trielevenflips
\end{center}
\caption{Triangulations coming from initial ideals for
$\GG_2$}\label{fig:g2flips}
\end{figure}

The 8 different possible ideals generated by these terms are all
Stanley-Reisner ideals coming from (different) triangulations of the
sphere with 11 vertices joined with $\Delta_2$; the triangulations can
be obtained by always choosing one of the two dotted diagonals in each
of the three quadrangles in Figure \ref{fig:g2flips}. Exactly two of
these triangulations are isomorphic to $T_{11}$.
\end{proof}

For $-1\leq k \leq i_g-1$ let
$h_0,\ldots,h_{i_g-k-1}$ be general hyperplanes in $\mathbb{P}^{g+1 +
i_g}$. We can degenerate each $h_j$ to the coordinate $x_j$
corresponding to the $j$th vertex of $\Delta_{i_g}$. Combined with the
degeneration of $M_g$ in Theorem \ref{thm:degen}, this gives a flat
family with general fiber $M_g \cap \{h_0 = \dots = h_{i_g-k-1}= 0\}$
and special fiber $\PP(T_{g+1}*\Delta_k)$. We sum this up as

\begin{corollary}\label{cor:secdegen} Fix $6\leq g \leq 10$ and some
$-1\leq k \leq i_g-1$. Let $V$ be the intersection of $M_g$ with
$(i_g-k)$ general hyperplane sections. Then $V$ degenerates to
$\PP(T_{g+1}*\Delta_k)$.
\end{corollary}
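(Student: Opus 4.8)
The plan is to combine the degeneration of $M_g$ from Theorem \ref{thm:degen} with a sequence of hyperplane degenerations, exactly as outlined in the paragraph preceding the statement. Fix $6 \leq g \leq 10$ and $-1 \leq k \leq i_g - 1$. By Theorem \ref{thm:degen} there is a flat one-parameter family over $\Spec \CC[t]$ whose general fiber is $M_g \subseteq \PP^{g+1+i_g}$ in its given embedding and whose special fiber is $\PP(T_{g+1} \ast \Delta_{i_g})$. The Stanley-Reisner complex $T_{g+1} \ast \Delta_{i_g}$ has $i_g + 1$ ``cone vertices'' coming from the simplex factor $\Delta_{i_g}$; label the corresponding coordinates $x_0, \dots, x_{i_g}$. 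The key combinatorial observation is that for a face $f$ of the simplex $\Delta_{i_g}$, taking the hyperplane section $\{x_j = 0\}$ of $\PP(T_{g+1} \ast \Delta_{i_g})$ simply deletes the vertex $x_j$ from the simplex factor, so that $\PP(T_{g+1} \ast \Delta_{i_g}) \cap \{x_0 = \dots = x_{i_g - k - 1} = 0\} = \PP(T_{g+1} \ast \Delta_k)$ (with the convention $\Delta_{-1} = \{\emptyset\}$, giving $\PP(T_{g+1})$ when $k = -1$).

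First I would set up the combined family. Take $i_g - k$ general hyperplanes $h_0, \dots, h_{i_g - k - 1}$ in $\PP^{g+1+i_g}$; since $M_g$ is smooth and these are general, $V := M_g \cap \{h_0 = \dots = h_{i_g - k - 1} = 0\}$ is a smooth complete intersection section. Now degenerate in two stages, or equivalently in one multi-parameter family: first let $t \to 0$ in the family of Theorem \ref{thm:degen}, carrying $M_g$ to $\PP(T_{g+1} \ast \Delta_{i_g})$, and simultaneously (or subsequently) let each $h_j$ degenerate linearly to the coordinate $x_j$. Concretely, over $\Spec \CC[t][s]$ one considers the ideal generated by $\init$-deformations of the $M_g$-equations together with $s h_j + (1-s) x_j$ for $j = 0, \dots, i_g - k - 1$; at $(t,s) = (1,1)$ the fiber is $V$ and at $(t,s) = (0,0)$ it is $\PP(T_{g+1}) \cap \{x_0 = \dots = x_{i_g-k-1}=0\}$-style scheme, namely $\PP(T_{g+1} \ast \Delta_k)$.

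The main point requiring care is \emph{flatness} of this combined family, and in particular that the special fiber is reduced of the expected dimension (so that no embedded or lower-dimensional components appear and the limit is genuinely $\PP(T_{g+1}\ast\Delta_k)$ rather than a proper closed subscheme of it or something with extra components). Here I would argue via Hilbert polynomials: the general fiber $V$, being a complete intersection section of $M_g$ by $i_g - k$ hyperplanes, has the same Hilbert polynomial as $\PP(T_{g+1} \ast \Delta_k)$ — both are arithmetically Cohen-Macaulay of dimension $\dim M_g - (i_g - k)$ and the degree computation from the proof of Theorem \ref{thm:degen} (degree $2(g-1)$ for the full $T_{g+1}$, unchanged by coning with a simplex or slicing cone directions) matches. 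Since taking initial ideals / the hyperplane degenerations are each flat and the Hilbert polynomial is constant, the combined family is flat and its special fiber is exactly $\PP(T_{g+1}\ast\Delta_k)$. Alternatively, one can simply invoke the two-step structure: Theorem \ref{thm:degen} gives the first flat degeneration, and degenerating general hyperplanes to coordinates is a standard flat degeneration (a Gröbner-type degeneration in the $h_j$), with the intersection remaining flat because the $h_j$ are general with respect to $M_g$. The expected obstacle is purely bookkeeping: checking that the generality of the $h_j$ is preserved when the $M_g$-equations are also being degenerated, i.e. that one can choose the order of limits (or a single generic path in the two-parameter base) so that all intermediate fibers have the correct dimension; this follows from upper-semicontinuity of fiber dimension together with the Hilbert polynomial match, and I would phrase the argument to avoid any delicate simultaneous-degeneration issue by performing the hyperplane slices first on $M_g$ itself and only then applying the term-order degeneration, which slices the monomial ideal $I_{T_{g+1}\ast\Delta_{i_g}}$ by the coordinates $x_j$ — a trivially flat operation on Stanley-Reisner ideals yielding $I_{T_{g+1}\ast\Delta_k}$.
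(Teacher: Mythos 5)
Your proposal takes the same route as the paper: combine the Gr\"obner degeneration of $M_g$ from Theorem~\ref{thm:degen} with the linear degeneration of the general hyperplanes $h_j$ to the cone coordinates $x_j$, and observe that slicing a join $T_{g+1}*\Delta_{i_g}$ by the simplex coordinates produces $T_{g+1}*\Delta_k$. The paper states this in a single paragraph without elaborating on flatness; your Hilbert-polynomial argument for flatness of the combined family is a correct and reasonable way to fill in that detail (it amounts to re-running the Sturmfels--Zelevinsky criterion with the $x_j$'s appended to the Gr\"obner basis), so this is the same proof with the bookkeeping made explicit.
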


When $3 \le g \le 5$ the $V_4'$, $V_6$, and $V_8$ are complete
intersections. Clearly they degenerate to the complete intersections
$\PP(T_{g+1}*\Delta_0)$. We get therefore Stanley-Reisner
degenerations of all rank one index one Fano threefolds of genus
$3\leq g \leq 10$.

\begin{remark} The boundary complex of the icosahedron, the last
deltahedron, gives a triangulation of the sphere with $12$ vertices
such that $T^2$ of the corresponding Stanley-Reisner ring vanishes. If
we call this complex $T_{12}$, there is no smooth Fano threefold which
has an embedded degeneration to $\PP( T_{12}*\Delta_0)$. Indeed, the
link of the vertex $\Delta_0$ corresponds to an affine chart $U_0
=\Spec A_{T_{12}}$. Since $T_{12}$ has no vertices of valency less
then 5, it follows easily from \cite[Theorem 4.6]{ac:def} that $U_0$
has no deformations in negative degree and is therefore not
smoothable.  This fits nicely with a ``missing'' Hilbert scheme
component. One computes, e.g. using Theorem \ref{topopen}, that $\PP(
T_{12}*\Delta_0)$ lies on a component of the Hilbert scheme with
dimension $174$. However, a component of the Hilbert scheme whose
general element is a smooth Fano must have dimension $173$, $175$,
$176$, or $177$ as can be computed from the classification in
\cite{mm:clm} by using \cite[Proposition 2.1]{ci:hil}.
\end{remark}

For $3\leq g \leq 9$ the above and Corollary \ref{homspace} show that
Proposition \ref{cor:toricdegen} applies so we get
\begin{corollary}\label{cor:toricdegenM} Let $V$ be a general
element in the deformations class $V_{2(g-1)}$ of rank one index
one smooth Fano threefolds of genus $3 \le g \le 9$. If $\nabla$ is a
lattice polytope having a unimodular regular triangulation of the form
$T_{g+1}*\Delta_0$, then $V$ degenerates to $\PP(\nabla)$.
\end{corollary}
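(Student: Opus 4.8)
The plan is to combine Corollary \ref{cor:toricdegenM}'s two ingredients, which are already in place in the excerpt. First I would invoke Corollary \ref{cor:secdegen} with $k=0$: this gives a flat degeneration of $V=V_{2(g-1)}$ (realized as $M_g$ cut by $i_g$ general hyperplanes when $6\le g\le 10$, or already as a complete intersection when $3\le g\le 5$) to the Stanley-Reisner scheme $\PP(T_{g+1}\ast\Delta_0)$. Next I would note that $T_{g+1}$ is a combinatorial $2$-sphere, and for $4\le g+1\le 10$ — i.e.\ exactly the range $3\le g\le 9$ — we have $T^2_{A_{T_{g+1}}}=0$ by the classification in \cite[Corollary 2.5]{iso:tor} (as recalled just before Theorem \ref{thm:degen}). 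Then I would check that $V$ is a generic point on its Hilbert scheme component: by Corollary \ref{homspace}, a smooth complete intersection of $r$ general forms of degrees $m_k$ in a rational homogeneous manifold $M_g$ with $\sum m_k< i_{M_g}$ is Fano and generic on its component; here the hyperplane sections have $m_k=1$ and their number $i_g = i_{M_g}-1< i_{M_g}$, so the hypothesis holds. (For $3\le g\le 5$ the $V_{2(g-1)}$ are complete intersections in projective space, and one uses instead the remark after Proposition \ref{cor:toricdegen}, or Proposition \ref{forget} with $V=\PP^N$, to see the general element is generic on its component.)

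With these three facts assembled, the conclusion is immediate from Proposition \ref{cor:toricdegen}: taking $\K=T_{g+1}$ and $m=0$, that proposition says precisely that if $V\subseteq\PP^N$ is a smooth Fano which is generic on its Hilbert component, $T^2_{A_\K}=0$, $V$ degenerates to $\PP(\K\ast\Delta_0)$, and $\nabla$ is a lattice polytope admitting a unimodular regular triangulation of the form $\K\ast\Delta_0$, then $V$ degenerates to $\PP(\nabla)$. So I would simply apply it with the data just verified.

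I do not expect any genuine obstacle here — the statement is a corollary that packages earlier results — but the one point requiring care is bookkeeping across the two regimes $3\le g\le 5$ and $6\le g\le 9$: in the first the $V_{2(g-1)}$ are honest complete intersections in (weighted) projective space rather than linear sections of a Mukai variety, so the degeneration to $\PP(T_{g+1}\ast\Delta_0)$ comes from the paragraph following Corollary \ref{cor:secdegen} rather than from the corollary itself, and the genericity on the Hilbert component needs the remark after Proposition \ref{cor:toricdegen} (or Proposition \ref{forget} applied with the ambient $V$ taken to be $\PP^N$, noting $H^1(\PP^N,\Theta_{\PP^N})=0$ and the degree bound on the defining forms). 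Once this case split is made explicit, the remaining argument is a direct citation of Proposition \ref{cor:toricdegen}.
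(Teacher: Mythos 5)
Your proposal is correct and follows essentially the same route as the paper, which proves this corollary simply by remarking that Corollary \ref{cor:secdegen}, the complete-intersection observation for $3\le g\le 5$, and Corollary \ref{homspace} together verify the hypotheses of Proposition \ref{cor:toricdegen}. One small simplification worth noting: for $3\le g\le 5$ you do not need a separate argument via the remark after Proposition \ref{cor:toricdegen} or a direct appeal to Proposition \ref{forget} — Corollary \ref{homspace} already applies as stated, since $\PP^N$ is itself a rational homogeneous manifold and the degree bounds $\sum m_k < i_{\PP^N} = N+1$ hold ($4<5$, $5<6$, $6<7$ for $g=3,4,5$ respectively). With that observation the argument becomes uniform across all $3\le g\le 9$, which is how the paper presents it.
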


\begin{remark} In the case $g = 10$ we know $T^2_{A_{T_{11}}} \ne
  0$. In fact one may compute that if $\K$ is the Fano complex $T_{11}
  \ast \Delta_0$ then $T^2_{\PP(\K)}$ is 6 dimensional. The Hilbert
  scheme locally at this scheme will consist of two components of
  dimensions 153 and 152. The rank one index one $V_{18}$ is on the
  153 dimensional component. This can be used to find toric
  degenerations of $V_{18}$. Indeed if $\nabla$ is a lattice polytope
  having a unimodular regular triangulation of the form $T_{11} \ast
  \Delta_0$ and $h^0(\PP(\nabla), N) = 153$ with
  $T^2_{\PP(\nabla)/\PP^{11}} = 0$, then $V_{18}$ degenerates to
  $\PP(\nabla)$. These two vector spaces can be computed explicitly
  via a comparison theorem, see \cite[Proposition 4.2]{ci:hil}.
\end{remark}

\section{Cotangent cohomology for flag complexes}\label{flag}
Recall that $\K$ is
called a \emph{flag complex} if any set of pairwise incident vertices
is a face. We may reformulate this as $b \subseteq V(\K)$, $b \notin
\K$ and $\partial b \subseteq \K$ implies $|b|=2$. Thus it is clear
that $\K$ is a flag complex if and only if $I_\K$ is generated by
quadratic monomials. For such a quadratic monomial generator
$x_{v}x_w$ we call the subset $\{v,w\}$ a \emph{non-edge}.  If $\K$ is
a flag complex, then so is $\link(f,\K)$ for all $f \in \K$. One
simple way to see this is to observe that the Stanley-Reisner ideal of
$\link(f)$ is gotten from $I_\K$ by putting $x_v = 1$ for all $v \in
f$.

A flag complex is determined by its edge graph $\Gamma =
\Gamma(\K)$, since $f \in \K$ if and only if the subgraph of $\Gamma$
induced by the vertices in $f$ is complete.  It is the \emph{clique
complex} of its edge graph. On the other hand the clique complex of
any simple graph is a flag complex.

When $f \in \K$ we always have $\link(f,\K) \subseteq L_f(\K)$, but
for a flag complex they are equal. In fact we have
\begin{lemma} \label{bin} A simplicial complex $\K$ is a flag complex
if and only if $\link(f,\K) = L_f(\K)$ for all faces $f$ with $\dim f
\ge 1$. 
\end{lemma}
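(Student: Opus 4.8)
The plan is to prove the two implications separately, where the real content lies in showing that a flag complex satisfies $\link(f,\K) = L_f(\K)$ for all faces $f$ with $\dim f \ge 1$, since the reverse inclusion $\link(f,\K) \subseteq L_f(\K)$ is automatic and holds for every complex (it is noted in the text just before the lemma). For the forward direction, recall $L_f(\K) = \bigcap_{f' \subset f} \link(f',\K)$, the intersection being over nonempty proper subsets $f'$ of $f$. So I must show: if $g \in \K$, $g \cap f = \emptyset$, and $g \cup f' \in \K$ for every nonempty $f' \subsetneq f$, then $g \cup f \in \K$. Since $\K$ is flag (equivalently, the clique complex of its edge graph $\Gamma$, as the text points out), it suffices to show that the vertices of $g \cup f$ are pairwise adjacent in $\Gamma$. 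Pairs within $g$ are adjacent because $g \in \K$; pairs within $f$ are adjacent because $f \in \K$ (here I use $\dim f \ge 1$ only to know $f$ is a genuine face with at least one edge — actually I just need $f \in \K$). The crux is a pair $\{v,w\}$ with $v \in f$, $w \in g$: choose the singleton $f' = \{v'\}$ for some $v' \in f$ with $v' \ne v$ — this exists precisely because $\#f \ge 2$, i.e. $\dim f \ge 1$. Wait, I actually want $v$ itself available; better: take $f' = \{v\}$, a nonempty proper subset of $f$ (proper since $\#f \ge 2$). Then $g \cup \{v\} \in \K$ by hypothesis, and since $w \in g$, the pair $\{v,w\}$ lies in this face, hence is an edge of $\Gamma$. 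Thus all pairs in $g \cup f$ are edges, so $g \cup f$ is a clique, so $g \cup f \in \K$ because $\K$ is flag. This gives $L_f(\K) \subseteq \link(f,\K)$ and hence equality.

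For the converse, suppose $\link(f,\K) = L_f(\K)$ for all faces $f$ with $\dim f \ge 1$; I want to conclude $\K$ is a flag complex, i.e. (using the reformulation in the text) that $b \subseteq V(\K)$ with $b \notin \K$ and $\partial b \subseteq \K$ forces $\#b = 2$. Suppose for contradiction $\#b \ge 3$. Pick any vertex $v \in b$ and set $f = b \setminus \{v\}$, so $\#f \ge 2$, hence $\dim f \ge 1$, and $f \in \partial b \subseteq \K$. Consider $g = \{v\}$: it is a vertex of $\K$ (it lies in $\partial b$ since $\#b \ge 2$), it is disjoint from $f$, and for every nonempty $f' \subsetneq f$ the set $\{v\} \cup f' = f' \cup \{v\}$ is a proper subset of $b$, hence lies in $\partial b \subseteq \K$. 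Therefore $\{v\} \in L_f(\K) = \link(f,\K)$, which means $\{v\} \cup f = b \in \K$, contradicting $b \notin \K$. So $\#b = 2$ and $\K$ is flag.

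I expect the main obstacle to be purely bookkeeping: getting the quantifiers in the definition of $L_f$ right (nonempty proper subsets), and being careful that the hypotheses $\dim f \ge 1$ are exactly what is needed — in the forward direction to guarantee a singleton subset $\{v\}$ of $f$ is a \emph{proper} subset, and in the converse to ensure the face $f = b \setminus \{v\}$ satisfies $\dim f \ge 1$ so that the hypothesis applies to it. No serious conceptual difficulty is anticipated; the proof is a short combinatorial argument once the clique-complex description of flag complexes (already recorded in the text) is in hand.
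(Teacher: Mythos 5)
Your proof is correct, and the converse direction coincides with the paper's: pick a facet $f = b \setminus \{v\}$ of $\partial b$ with $\dim f \ge 1$, observe that $\{v\} \in L_f$ but $\{v\} \notin \link(f)$, and conclude. The forward direction, however, takes a genuinely different route. The paper proceeds by contradiction: it picks a minimal $g \in L_f \setminus \link(f)$, shows that $b = g \cup f$ is then a non-face whose boundary $\partial b$ lies entirely in $\K$ (using minimality of $g$ for the ``missing a vertex of $g$'' subsets and $g \in L_f$ for the ``missing a vertex of $f$'' subsets), and invokes the flag condition in the form ``$b \notin \K$, $\partial b \subseteq \K$ implies $|b|=2$'' to reach a contradiction since $|g \cup f| \ge 3$. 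You instead use the equivalent clique-complex description of flag complexes directly: given $g \in L_f$, you verify that every pair of vertices in $g \cup f$ is an edge of the edge graph $\Gamma(\K)$ — pairs inside $g$ or inside $f$ because those are faces, and mixed pairs $\{v,w\}$ (with $v \in f$, $w \in g$) because $g \cup \{v\} \in \K$ follows from $g \in \link(\{v\})$, which is where $\dim f \ge 1$ ensures $\{v\} \subsetneq f$. Your argument is shorter and avoids both the minimality choice and the explicit check that $\partial(g \cup f) \subseteq \K$; the paper's argument has the virtue of only using the ``non-faces of size $\ge 3$'' form of the flag condition, the same form it then uses for the converse, so the two halves are more visibly dual. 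Both are valid.
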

\begin{proof} Assume first that $\K$ is a flag complex. If
$\link(f,\K) \ne L_f$ and $\dim f \ge 1$, there exists a non-empty
$g\in \K$ with $g \cup f^\prime \in \K$ for all faces $f^\prime\subset
f$ and $g \cap f^\prime = \emptyset$ for all faces $f^\prime\subset
f$, but $g \notin \link(f)$. Clearly $f \cap g = \emptyset$, so $f
\cup g \notin \K$. If $g^\prime \subset g$ then $g^\prime \in L_f$, so
we may choose $g$ minimal, i.e.\ we may assume $g^\prime \cup f \in
\K$ for all $g^\prime \subset g$. But then, if $b = g \cup f$, $b
\notin \K$ and $\partial b \subseteq \K$. Since $\K$ is a flag complex
we must have $|g \cup f| = 2$ contradicting $\dim f \ge 1$.

Assume now $\link(f,\K) = L_f(\K)$ for all faces $f$ with $\dim f \ge
1$. If $|b| \ge 2$, $b \notin \K$ and $\partial b \subseteq \K$ let
$f$ be a facet of $\partial b$ and $v = b \setminus f$. If $f^\prime
\subset f$ then clearly $v \cap f^\prime = \emptyset$. Moreover
$f^\prime \cup v$ will be in some other facet of $\partial b$, so $v
\in L_f$. On the other hand $v \cup f = b$ so $v \notin
\link(f)$. Therefore $f$ must be a vertex and $|b| = 2$.
\end{proof}

\begin{lemma} \label{bface} If $\K$ is a flag complex and $b \in \K$
and $|b| \ge 2$ then $T^i_{\emptyset-b}(\K) = 0$ for $i=1,2$.
\end{lemma}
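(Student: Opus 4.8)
The plan is to use Theorem~\ref{topopen} to reduce the vanishing of $T^i_{\emptyset-b}(\K)$ to a topological statement about the pair $(\langle U_b(\K)\rangle,\langle\widetilde U_b(\K)\rangle)$, and then to exploit the hypothesis that $b\in\K$ is a face of size $\geq 2$ together with flagness. First I would observe that since $b\in\K$, the complement $\K\setminus U_b$ is $\overline{\st}(b)=\bar b\ast\link(b,\K)$, and $\K\setminus\widetilde U_b = (\partial b\ast L_b)\cup\overline{\st}(b)$. Because $\K$ is flag and $\dim b\geq 1$, Lemma~\ref{bin} gives $L_b=\link(b,\K)$, so $\partial b\ast L_b = \partial b\ast\link(b,\K)$ is already contained in $\bar b\ast\link(b,\K)=\overline{\st}(b)$. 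Hence $\K\setminus\widetilde U_b=\overline{\st}(b)=\K\setminus U_b$, which means $U_b=\widetilde U_b$ as subsets of $\K$.

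With $U_b=\widetilde U_b$ we get $\langle U_b\rangle=\langle\widetilde U_b\rangle$, so the relative (co)homology of the pair $(\langle U_b(\K)\rangle,\langle\widetilde U_b(\K)\rangle)$ vanishes in all degrees; by Theorem~\ref{topopen} this forces $T^i_{\emptyset-b}(\K)=0$ for $i=1,2$. The one case not covered by the unreduced formula in that theorem is $|b|=1$, but our hypothesis $|b|\geq 2$ rules this out, so no separate argument with reduced cohomology is needed. I should double-check at the outset that the conditions of Theorem~\ref{topopen} (namely $a=\emptyset\in\K$, $\mathbf b\in\{0,1\}^{|V|}$, and $b$ nonempty with $b\subseteq V(\link(\emptyset,\K))=V(\K)$) are indeed met, which they are since $b\in\K$.

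The only real subtlety—hence the step I would treat most carefully—is the equality $L_b=\link(b,\K)$ and the resulting containment $\partial b\ast L_b\subseteq\overline{\st}(b)$. The inclusion $\link(b,\K)\subseteq L_b$ always holds, and Lemma~\ref{bin} supplies the reverse inclusion precisely because $\dim b\geq 1$; it is worth noting explicitly that this is where the hypothesis $|b|\geq 2$ is used a second time. Once that identification is in hand everything else is formal bookkeeping with the descriptions of $\K\setminus U_b$ and $\K\setminus\widetilde U_b$ recalled just before Theorem~\ref{topopen}, so I do not anticipate any further obstacle.
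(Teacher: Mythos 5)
Your proof is correct and follows essentially the same route as the paper's: invoke Lemma~\ref{bin} to identify $L_b$ with $\link(b,\K)$, deduce $\widetilde U_b = U_b$, and conclude via Theorem~\ref{topopen}. The additional bookkeeping you include (checking the hypotheses of Theorem~\ref{topopen}, noting why $|b|\geq 2$ excludes the reduced-cohomology case) is sound but not substantively different from the paper's argument.
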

\begin{proof} From Lemma \ref{bin} we know $\link(b,\K) = L_b$. Recall
that $\overline{\st}(b) = \overline{b} \ast \link(b)$. Thus $(\partial
b \ast L_b) \cup \overline{\st}(b) = (\partial b \ast \link(b)) \cup
\overline{\st}(b) = \overline{\st}(b)$. It follows that
$\widetilde{U}_b = U_b $ so $T^i_{\emptyset-b}(\K) = 0$ by Theorem
\ref{topopen}.
\end{proof}

\begin{remark} For $T^1$ the above is a rather trivial observation
since the ideal is generated by quadrics, but for $T^2$ there does not
seem to be an easy alternative argument.
\end{remark}

Since links of faces in flag complexes are flag complexes, Proposition
\ref{aempty} tells us that if we know $T^2_{\emptyset-b}(\K)$ for flag
complexes we know all $T^2_{a-b}(\K)$.
\begin{proposition} \label{t2flag} If $\K$ is a flag complex then
$T^2_{\emptyset-b}(\K) = 0$ unless
\begin{list}{\textup{(\roman{temp})}}{\usecounter{temp}}
\item $b = \{v\}$ is a vertex, then $T^2_{\emptyset-\{v\}}(\K) \simeq
{H}^{1}(|\mathcal{K}|\setminus | \overline{\st}(\{v\})|,k) $ or
\item $b$ is a non-edge, then $T^2_{\emptyset-b}(\K) \simeq
\widetilde{H}^{0}(|\mathcal{K}|\setminus |
\partial b \ast L_{b}|,k)$.
\end{list} In particular if $|K|$ is a sphere and $|L_b|$ is
contractible  then $T^2_{\emptyset-b}(\K) = 0$.
\end{proposition}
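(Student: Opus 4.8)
The plan is to apply Theorem \ref{topopen} directly, specializing to the case $a = \emptyset$ and a flag complex $\K$. By Theorem \ref{topopen}, the only possibly nonzero pieces $T^2_{\emptyset - b}(\K)$ occur when $b \subseteq V(\K)$ and either $b \in \K$ or $b \notin \K$ with $\partial b \subseteq \K$; in the latter case, since $\K$ is flag, Lemma \ref{bin} (or the very definition of flag complex) forces $|b| = 2$, so $b$ is a non-edge. The case $b \in \K$ with $|b| \ge 2$ is handled by Lemma \ref{bface}, which says $T^2_{\emptyset - b}(\K) = 0$. So the only cases left to describe are $b = \{v\}$ a single vertex, and $b$ a non-edge. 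In both remaining cases I would use the identification $T^2_{A_\K, \mathbf{c}} \simeq H^{1}\big(\langle U_b(\K)\rangle, \langle \widetilde U_b(\K)\rangle, k\big)$ (reduced cohomology when $|b| = 1$), and then simplify the pair of spaces using the formulas for $\K \setminus U_b$ and $\K \setminus \widetilde U_b$ recorded just before Theorem \ref{topopen}.

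For the vertex case $b = \{v\}$: here $\partial b = \{\emptyset\}$ is trivially a subcomplex, $\K \setminus U_v = \overline{\st}(v)$, and $\K \setminus \widetilde U_v = (\partial b \ast L_b) \cup \overline{\st}(v) = \overline{\st}(v)$ as well (since $\partial\{v\}\ast L_b = L_b$ and $L_b = \link(v)\subseteq\overline{\st}(v)$ for flag complexes by Lemma \ref{bin} — actually even without flagness $\partial\{v\}$ is empty so $\partial b \ast L_b$ reduces appropriately). Thus $U_v = \widetilde U_v$ and the relative reduced cohomology of the pair $(\langle U_v\rangle, \langle\widetilde U_v\rangle)$ collapses: the long exact sequence of the pair gives $H^1(\langle U_v\rangle, \langle U_v\rangle) = 0$? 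No — I need to be careful: when $U_v = \widetilde U_v$ we are looking at the relative cohomology of a space with itself, which would vanish, but the theorem already excludes this via the ``unless'' clauses. Instead, the right approach for $b=\{v\}$ is to recognize $\langle U_v\rangle$ as homotopy equivalent to $|\K| \setminus |\overline{\st}(v)|$: indeed $U_v = \K \setminus \overline{\st}(v)$ as a set of faces, and $\langle U_v\rangle$ deformation retracts onto (or is exactly) $|\K| \setminus |\overline{\st}(v)|$, while $\langle\widetilde U_v\rangle = \langle U_v\rangle$ since $\widetilde U_v = U_v$ here. So the relative cohomology $H^1(\langle U_v\rangle, \langle\widetilde U_v\rangle)$ — wait, this really is zero. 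The resolution is that for $|b|=1$ the theorem uses \emph{reduced} cohomology of the pair, and one must instead directly compute: the correct statement is $T^2_{\emptyset-\{v\}}(\K) \simeq \widetilde H^1(\langle U_v\rangle) = \widetilde H^1(|\K|\setminus|\overline{\st}(v)|)$, obtained because the relevant pair degenerates so that the relative term equals the reduced cohomology of the single space $\langle U_v\rangle$ (one checks $\langle\widetilde U_v\rangle$ plays the role of the cone point / is absorbed). I would spell this out by tracking the definitions carefully; this is the step most likely to require care.

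For the non-edge case $b$ with $|b| = 2$: now $\K \setminus \widetilde U_b = \partial b \ast L_b$ (the non-face branch), and $\langle\widetilde U_b\rangle$ is correspondingly $|\K| \setminus |\partial b \ast L_b|$ (up to the relatively-open-simplex bookkeeping, $\langle Y\rangle$ for $Y = \widetilde U_b$ equals the complement in $|\K|$ of the realization of the complementary subcomplex). Meanwhile $\K \setminus U_b = \overline{\st}(b)$, which for a flag complex with $b$ a \emph{non}-edge is empty — so $U_b = \K$ and $\langle U_b\rangle = \langle \K \rangle = |\cone(\K)|$, which is contractible. Feeding a contractible total space into the long exact sequence of the pair $(\langle U_b\rangle, \langle\widetilde U_b\rangle)$ gives $H^1(\langle U_b\rangle, \langle\widetilde U_b\rangle) \simeq \widetilde H^0(\langle\widetilde U_b\rangle) = \widetilde H^0(|\K| \setminus |\partial b \ast L_b|)$, which is exactly statement (ii). I should double-check the orientation of the connecting map and that reduced $H^0$ is what appears (the contractibility of $\langle U_b \rangle = |\cone \K|$ kills $H^0$ and $H^1$, shifting the pair's cohomology down to $\widetilde H^0$ of the subspace).

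Finally, for the concluding sentence: if $|\K|$ is a sphere, then for a vertex $v$, $|\overline{\st}(v)|$ is a closed ball (a cone, hence contractible, and PL-embedded), so $|\K| \setminus |\overline{\st}(v)|$ is an open ball, giving $\widetilde H^1 = 0$, so (i) vanishes. For a non-edge $b = \{v,w\}$: $\partial b = \{\{v\},\{w\}\}$ is two points, so $\partial b \ast L_b$ is the join of $S^0$ with $|L_b|$, which is the \emph{suspension} $\Sigma|L_b|$; if $|L_b|$ is contractible then $\Sigma|L_b|$ is contractible, and moreover it sits inside the sphere $|\K|$ as a nicely-embedded contractible subpolyhedron, so its complement is connected (Alexander duality / the complement of a contractible PL subcomplex of a sphere is connected when the subcomplex has codimension $\ge 1$), giving $\widetilde H^0(|\K|\setminus|\partial b \ast L_b|) = 0$, so (ii) vanishes. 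Combining with Lemma \ref{bface} and the case analysis above, $T^2_{\emptyset - b}(\K) = 0$ in all cases, as claimed. The main obstacle I anticipate is getting the reduced-vs-unreduced cohomology and the exact degenerations of the pairs $(\langle U_b\rangle, \langle\widetilde U_b\rangle)$ precisely right in the two surviving cases — the topology is elementary but the indexing conventions in Theorem \ref{topopen} demand attention.
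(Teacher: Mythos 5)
Your overall plan follows the paper's proof: reduce via Lemma \ref{bface} and the flag condition to the vertex and non-edge cases, then apply Theorem \ref{topopen} and simplify the pair $(\langle U_b\rangle, \langle\widetilde U_b\rangle)$. Your treatment of the non-edge case is correct and is exactly the paper's argument (contractibility of $\langle U_b\rangle = |\cone\K|$ feeds the long exact sequence, shifting to $\widetilde H^0$ of $\langle\widetilde U_b\rangle = |\K| \setminus |\partial b * L_b|$). Your elaboration of the final ``in particular'' sentence (Alexander duality against $\overline{\st}(v)$ and $\Sigma L_b$, both contractible) is actually more explicit than the paper, which just asserts it.

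However, there is a genuine error in the vertex case. You compute $L_{\{v\}} = \link(v)$, but this is wrong: by definition $L_b = \bigcap_{b' \subset b}\link(b',\K)$ where $b'$ runs over \emph{proper} subsets, and the only proper subset of $\{v\}$ is $\emptyset$, so $L_{\{v\}} = \link(\emptyset,\K) = \K$. Relatedly, you waver on whether $\partial\{v\}$ is $\{\emptyset\}$ or the truly empty complex; it is $\{\emptyset\}$ (you say so yourself at first), and joining with $\{\emptyset\}$ is the identity, so $\partial\{v\} * L_{\{v\}} = \K$. Hence $\K\setminus\widetilde U_{\{v\}} = \K\cup\overline{\st}(v) = \K$, i.e.\ $\widetilde U_{\{v\}} = \emptyset$ — not $U_v$ as you claim. (One can also read this off directly from the definition of $\widetilde U_b$: for $b=\{v\}$, $(f\cup\{v\})\setminus\{v\} = f\setminus\{v\}\in\K$ always.) With $\widetilde U_{\{v\}}=\emptyset$, Theorem \ref{topopen} (reduced version since $|b|=1$) gives $T^2_{\emptyset-\{v\}}\simeq\widetilde H^1(\langle U_v\rangle) = H^1(|\K|\setminus|\overline{\st}(v)|)$ immediately, with no need for the workaround you sketch. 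Your own conclusion that the relative cohomology ``really is zero'' under $\widetilde U_v = U_v$ was the correct signal that the computation had gone astray; replacing $\widetilde U_v$ with $\emptyset$ is the fix, and it is the key point the paper makes in this case.
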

\begin{proof} Since $U_b=\widetilde{U}_{b}$ unless $\partial b
\subseteq \mathcal{K}$, it follows from Lemma \ref{bface} and Theorem
\ref{topopen} that $T^2_{\emptyset - b}(\K) = 0$ unless $b$ is a
vertex or non-edge.  The isomorphisms are true for all simplicial
complexes.  By Theorem~\ref{topopen} we have $T^2_{\emptyset-b} \simeq
H^{1}(\langle U_{b}\rangle, \langle \widetilde{U}_{b}\rangle)$.  If
$b\not\in \mathcal{K}$, then $\emptyset \in U_{b}$, so $\langle
U_{b}\rangle$ is a cone and $H^{1}(\langle U_{b}\rangle, \langle
\widetilde{U}_{b}\rangle)\simeq \widetilde{H}^{0}( \langle
\widetilde{U}_{b}\rangle) \simeq
\widetilde{H}^{0}(|\mathcal{K}|\setminus |
\partial b \ast L_{b}|,k)$.  If $b$ is a vertex, then
$\widetilde{U}_{b}=\emptyset$.
\end{proof}

\section{Cotangent cohomology for the dual associahedron} We will now
apply this to $\A_n$.  Let $A_n$ be the Stanley-Reisner ring of
$\A_n$. The simplicial complex $\A_n$ is a flag complex and the
non-edges consist of two crossing diagonals. The Stanley-Reisner ideal
of $\A_n$ is thus generated by the $\binom{n}{4}$ quadratic monomials
$x_{ik}x_{jl}$ with $ 1 \le i < j < k <l \le n$ in $k[x_{ij} : i < j ,
\, \delta_{ij} \in V_n] $. For a face $f$ let $\mathcal{P}_f$ be the
set of polygons in the partition of the $n$-gon defined by $f$.

\begin{lemma} \label{link} If $f \in \A_n$ has dimension $r$ and
splits the $n$-gon into $n_i$-gons, $i = 0, \dots , r+1$, then $\sum
n_i = n + 2(r+1)$ and $\link(f,\A_n) \simeq \A_{n_0} \ast \A_{n_1}
\ast \dots \ast \A_{n_{r+1}}$.
\end{lemma}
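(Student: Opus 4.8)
The plan is to verify the two claims separately. The numerical identity $\sum n_i = n + 2(r+1)$ is an easy Euler-type count: a face $f$ of dimension $r$ is a set of $r+1$ pairwise non-crossing diagonals, which cuts the $n$-gon into $r+2$ subpolygons $P_0,\dots,P_{r+1}$ with $P_i$ an $n_i$-gon. Summing edge counts over the subpolygons, each of the $n$ boundary edges of the $n$-gon is counted once, while each of the $r+1$ diagonals bounds exactly two of the subpolygons and so is counted twice; hence $\sum_i n_i = n + 2(r+1)$. I would state this in a single line.

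The substantive claim is the join decomposition of the link. First I would recall from Section \ref{ad} (Lee's description) that the vertices of $\A_n$ are the diagonals $\delta_{ij}$ of the $n$-gon and that a set of diagonals spans a face precisely when they are pairwise non-crossing. Given $f = \{\delta_{i_1j_1},\dots,\delta_{i_{r+1}j_{r+1}}\}$, a diagonal $\delta$ satisfies $f \cup \{\delta\} \in \A_n$ iff $\delta$ crosses none of the diagonals in $f$, which happens iff $\delta$ lies entirely inside the closure of one of the subpolygons $P_i$ (and is a genuine diagonal of that $P_i$, i.e.\ not one of its bounding edges). Thus $V(\link(f,\A_n))$ is the disjoint union $\coprod_i V(\A_{n_i})$, where we identify the vertex set of $\A_{n_i}$ with the diagonals of $P_i$. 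Moreover $g \in \link(f,\A_n)$ iff $g \cup f \in \A_n$ iff the diagonals of $g$ are pairwise non-crossing and each crosses nothing in $f$; since two diagonals lying in different subpolygons $P_i, P_j$ automatically do not cross, this condition says exactly that, writing $g = \bigsqcup_i g_i$ with $g_i$ the diagonals of $g$ inside $P_i$, each $g_i$ is a face of $\A_{n_i}$. That is precisely the definition of a face of the join $\A_{n_0} * \A_{n_1} * \dots * \A_{n_{r+1}}$, giving the desired isomorphism of simplicial complexes.

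The main obstacle — really the only point requiring care — is the bookkeeping for degenerate small subpolygons: if some $n_i = 3$ then $P_i$ is a triangle with no diagonals and $\A_3 = \{\emptyset\}$ contributes nothing to the join, which is consistent (a triangle contributes no vertices to $\link(f,\A_n)$); and one must make sure that no diagonal of the original $n$-gon other than those lying strictly inside some $P_i$ can be added to $f$, i.e.\ that the bounding diagonals of the $P_i$ are exactly the elements of $f$ together with edges of the $n$-gon, so they are not available as new vertices of the link. This follows immediately from the fact that $f$ is precisely the set of cutting diagonals. With these checks in place the identification is straightforward, and I would keep the write-up to a short paragraph invoking Lee's combinatorial model and the crossing criterion.
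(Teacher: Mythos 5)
Your proof is correct and essentially the same as the paper's: the paper argues at the level of facets (a triangulation of the $n$-gon containing $f$, minus $f$, is a tuple of triangulations of the subpolygons $P_i$), while you argue at the level of all faces via the crossing criterion, but both hinge on the same observation that the diagonals compatible with $f$ are exactly those lying inside some $P_i$ and that diagonals in distinct $P_i$ never cross. Your write-up is more detailed than the paper's two-sentence proof, but there is no difference in method.
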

\begin{proof} The facets of $\link(f)$ may be seen by taking a
triangulation of the $n$-gon containing all the diagonals in $f$ and
then removing the diagonals in $f$. This
clearly gives the splitting.
\end{proof}

Let $i, j, k, l$ be labels of vertices on the $n$-gon with $i < j < k
<l$. Consider the inscribed quadrangle $Q = Q_{ijkl}$ with vertices
$\{i,j,k,l\}$ (see Figure \ref{8gon}). If $i+1 \le j-1$ then $\delta_{ij}$ is a diagonal
splitting the $n$-gon into two polygons. Let $\A_{ij}$ be the dual
associahedron corresponding to the polygon with vertices $\{i, i+1,
\dots , j-1, j\}$, i.e.\ having the common edge $\delta_{ij}$ with
$Q$. Finally let $B_{ij}$ be the triangulated ball $B_{ij} =
\{\delta_{ij}\} \ast \A_{ij} \subseteq \A_n$. If $j = i+1$ set
$B_{ij}$ to be the empty complex. Now do the same for the other edges
of $Q$ to get the 4 pairwise disjoint sub-complexes $B_{ij}, B_{jk},
B_{kl}, B_{li}$ and set $B_{ijkl} = B_{ij} \ast B_{jk} \ast B_{kl}
\ast B_{li}$.

\begin{figure}
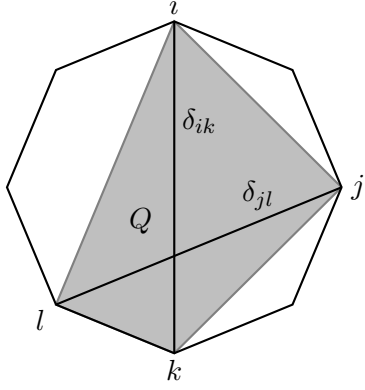
 \centering
\octa
	\caption{Two crossing diagonals and the quadrangle $Q$.}
\label{8gon}
\end{figure}

\begin{lemma} \label{boundb} If $b$ is the non-edge consisting of two
crossing diagonals $\delta_{ik}, \delta_{jl}$ with $i < j < k <l$,
then $L_b(\A_n) = B_{ijkl}$. In particular, if $n \ge 5$, $|L_b|$ is a
$(n-5)$-dimensional ball.
\end{lemma}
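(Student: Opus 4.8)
The plan is to unwind the definition of $L_b$, then prove a combinatorial criterion for which diagonals survive, and finally read off the ball structure from standard facts about joins. Since $\link(\emptyset,\A_n)=\A_n$ and links are subcomplexes, we have $L_b(\A_n)=\link(\delta_{ik},\A_n)\cap\link(\delta_{jl},\A_n)$, so a set $g$ of diagonals of the $n$-gon is a face of $L_b$ exactly when $g$ is pairwise non-crossing, $\delta_{ik},\delta_{jl}\notin g$, and every $\delta\in g$ crosses neither $\delta_{ik}$ nor $\delta_{jl}$. (Alternatively one can apply Lemma~\ref{link} to $\delta_{ik}$ and to $\delta_{jl}$ and intersect the resulting vertex sets, but the bookkeeping below is essentially the same.)

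The key step is the claim that a diagonal $\delta_{ab}$ crosses neither $\delta_{ik}$ nor $\delta_{jl}$ if and only if both endpoints $a,b$ lie in a common one of the four closed arcs $[i,j]$, $[j,k]$, $[k,l]$, $[l,i]$ into which $\{i,j,k,l\}$ splits the boundary of the $n$-gon. Recalling that $\delta_{ab}$ crosses $\delta_{ik}$ exactly when $\{a,b\}$ separates $\{i,k\}$ on the circle, both implications follow from a short case check on the positions of $a$ and $b$ relative to $i,j,k,l$, with some care taken over the cases where $a$ or $b$ coincides with one of $i,j,k,l$ (shared endpoints giving no crossing). Moreover crossing is arc-local: a diagonal with both endpoints in one arc cannot cross a diagonal with both endpoints in a different arc, since crossing chords must have interleaved endpoints while two of the four arcs meet in at most one vertex. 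Hence a pairwise non-crossing $g$ as above is precisely a disjoint union $g=g_1\sqcup g_2\sqcup g_3\sqcup g_4$, where $g_m$ is a pairwise non-crossing set of diagonals confined to the $m$th arc. Now the diagonals confined to $[i,j]$ are $\delta_{ij}$ itself (present only when $j>i+1$) together with the diagonals of the polygon on $\{i,i+1,\dots,j\}$, and $\delta_{ij}$ crosses none of the latter; so the possible $g_1$ are exactly the faces of $\{\delta_{ij}\}\ast\A_{ij}=B_{ij}$, reducing to the empty complex when $j=i+1$. The same holds for the other three arcs, whence $L_b(\A_n)=B_{ij}\ast B_{jk}\ast B_{kl}\ast B_{li}=B_{ijkl}$.

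For the final sentence, recall that for $m\ge4$ the complex $\A_m$ is the boundary of a simplicial polytope, hence a combinatorial $(m-4)$-sphere; so $B_{ij}=\{\delta_{ij}\}\ast\A_{ij}$ is a cone over a combinatorial $(j-i-3)$-sphere and therefore a combinatorial ball of dimension $j-i-2$, the degenerate cases $j=i+1$ and $j=i+2$ (yielding $\Delta_{-1}$ and a single point) being consistent with this formula. A join of combinatorial balls is again a combinatorial ball, so joining the four balls $B_{ij},B_{jk},B_{kl},B_{li}$ shows $|L_b|$ is a combinatorial ball of dimension $(j-i-2)+(k-j-2)+(l-k-2)+(n-l+i-2)+3=n-5$, which is nonnegative precisely when $n\ge5$. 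The only genuinely delicate point is the crossing criterion in the second paragraph; the rest is immediate from the definitions or is standard PL topology.
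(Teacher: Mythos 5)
Your proof is correct and follows essentially the same route as the paper's: identify $L_b$ as the join of the four sub-balls $B_{ij},B_{jk},B_{kl},B_{li}$ by analyzing which diagonals avoid crossing both of $\delta_{ik},\delta_{jl}$, and then conclude the ball statement from the facts that a cone over a sphere is a ball, a join of balls is a ball, and a dimension count. The paper's version is terser (it argues by contradiction that any face outside $B_{ijkl}$ must contain a diagonal crossing $\delta_{ik}$ or $\delta_{jl}$, and leaves the arc-confinement and arc-locality observations implicit), whereas you spell out the key combinatorial criterion---that a diagonal avoids both of $\delta_{ik},\delta_{jl}$ if and only if its endpoints lie in a common one of the four closed arcs determined by $Q$---and verify that non-crossing sets factor through the arcs; this is the same content, just made explicit.
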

\begin{proof} It is clear that $ B_{ij} \ast B_{jk} \ast B_{kl}
\ast B_{li} \subseteq L_b$. Assume $f \notin B_{ij} \ast B_{jk} \ast
B_{kl} \ast B_{li}$. Then $f$ must contain a diagonal $\delta$ which
is either inside $Q$ or crosses one of the edges of $Q$. In the first
case $\delta$ must be either $\delta_{ik}$ or $\delta_{jl}$ and can
therefore not be in the corresponding link. In the second case
$\delta$ must also cross at least one of $\delta_{ik}, \delta_{jl}$.

The space $|L_b|$ is a ball since the join of two balls is a
ball. Note that $\dim B_{ij} = \dim \A_{ij} + 1$. The dimension of
$L_b$ is $\dim B_{ij} +\dim B_{jk} + \dim B_{kl} + \dim B_{li} + 3 = n-5$. 
\end{proof}

\begin{theorem} \label{t2a} The module $T^2_{A_n} = 0$ for all $n$.
\end{theorem}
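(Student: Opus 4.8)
The plan is to argue by strong induction on $n$, reducing everything to the flag-complex results of Section \ref{flag}. By Theorem \ref{topopen} it suffices to show that each homogeneous piece $T^2_{a-b}(\A_n)$ vanishes, where $a\in\A_n$ and $\emptyset\ne b\subseteq V(\link(a,\A_n))$. The base cases are $n=3$ (where $A_3=\CC$) and $n=4$ (where $A_4=\CC[x,y]/(xy)$ is a hypersurface, so $T^2_{A_4}=0$).

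For $n\ge 5$ I would first dispose of the pieces with $a\ne\emptyset$. By Proposition \ref{aempty}, $T^2_{a-b}(\A_n)\cong T^2_{\emptyset-b}(\link(a,\A_n))$, and by Lemma \ref{link} the link is a join $\A_{n_0}\ast\cdots\ast\A_{n_{r+1}}$ with $\sum_i n_i=n+2(r+1)$; since there are $r+2\ge 2$ summands, each at least $3$, every $n_i$ satisfies $3\le n_i\le n-1$. The Stanley-Reisner ring of a join is the tensor product over $\CC$ of the rings of the factors (as used already in Proposition \ref{cor:nonobstructed}), so iterating the split exact sequence of Proposition \ref{tensor} shows that $T^2$ of such a tensor product vanishes as soon as each factor has vanishing $T^2$. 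By the induction hypothesis this holds, whence $T^2_{A_{\link(a,\A_n)}}=0$; in particular its summand $T^2_{\emptyset-b}(\link(a,\A_n))$ is zero.

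It remains to treat $a=\emptyset$. Since $\A_n$ is a flag complex whose non-faces are exactly the crossing pairs of diagonals, Proposition \ref{t2flag} forces $T^2_{\emptyset-b}(\A_n)=0$ unless $b$ is a vertex or $b$ is a non-edge. If $b$ is a non-edge, Lemma \ref{boundb} shows that $L_b(\A_n)=B_{ijkl}$ is a ball, so $|L_b|$ is contractible; as $|\A_n|$ is a sphere, the final clause of Proposition \ref{t2flag} gives $T^2_{\emptyset-b}(\A_n)=0$. If $b=\{v\}$ is a vertex, Proposition \ref{t2flag}(i) gives $T^2_{\emptyset-\{v\}}(\A_n)\cong H^1(|\A_n|\setminus|\overline{\st}(v)|)$; because $\A_n$ is the boundary complex of a simplicial polytope it is a combinatorial sphere, and $\link(v,\A_n)$, being a join of such boundary complexes by Lemma \ref{link}, is again a combinatorial sphere, so $|\overline{\st}(v)|=|\bar{v}\ast\link(v,\A_n)|$ is a combinatorial ball whose complement in the sphere $|\A_n|$ is an open ball; hence $H^1=0$, and the induction closes.

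The substantive inputs are Lemma \ref{boundb} (already established) and the fact that $\A_n$ and all of its vertex links are combinatorial spheres; granted these, the theorem follows by a short induction using Propositions \ref{aempty}, \ref{tensor}, \ref{t2flag} and Lemmas \ref{link}, \ref{boundb}. The main obstacle, such as it is, lies in the vertex case, which—unlike the non-edge case—is not covered directly by the final clause of Proposition \ref{t2flag} and requires the separate (standard) observation that the complement of a closed star in a combinatorial sphere is an open ball.
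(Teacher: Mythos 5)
Your argument is correct and follows the paper's own proof essentially line by line: induction on $n$, Proposition~\ref{aempty} plus Lemma~\ref{link} and Proposition~\ref{tensor} for $a\ne\emptyset$, and Proposition~\ref{t2flag} with Lemma~\ref{boundb} for $a=\emptyset$. You are also right that the final clause of Proposition~\ref{t2flag} (as stated in terms of $|L_b|$ contractible) does not literally cover the vertex case $b=\{v\}$, since there $L_b=\K$; the paper glosses over this, whereas you correctly supply the standard observation that the complement of a closed star in a combinatorial sphere is an open ball, so $H^1$ vanishes.
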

\begin{proof} We will use induction on $n$. For $n = 4$ the result is
clear since $A_4 = k[x,y]/(xy)$. We must show that all the spaces
$T^2_{a-b}(\A_n)$ vanish. If $a \ne \emptyset$ we may use Proposition
\ref{aempty}. Note that $A_{\K \ast \mathcal{L}} = A_{\K} \otimes_k
A_{\mathcal{L}}$. Thus if $T^2_{A_k} = 0$ for all $k < n$, we get
$T^2_{a-b}(\A_n) = 0$ for $a \ne \emptyset$ by Lemma \ref{link} and
Proposition \ref{tensor}. We are left with the case $a = \emptyset$
and this follows directly from Proposition \ref{t2flag} and Lemma
\ref{boundb}.
\end{proof}

In relation to degenerations to toric varieties Proposition \ref
{cor:toricdegen}, Corollary \ref{homspace} and the previously referred
to \cite[Proposition 3.7.4]{st:alg} imply
\begin{corollary}\label{g2ndegen} Let $\nabla$ is a lattice polytope
having a unimodular regular triangulation of the form $\A_n \ast
\Delta_m$, $0 \le m \le n-1$, then $\PP(\nabla)$ is a degeneration of
a codimension $n-m-1$ linear section of $G(2,n)$.
\end{corollary}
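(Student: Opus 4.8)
The plan is to combine five ingredients that are all already available: Sturmfels' circular-order Gr\"obner degeneration of $G(2,n)$ (\cite[Proposition~3.7.4]{st:alg}), the hyperplane-specialization trick used in the cases $g=6,8$ of the proof of Theorem~\ref{thm:degen} (and packaged in Corollary~\ref{cor:secdegen}), the vanishing $T^2_{A_n}=0$ of Theorem~\ref{t2a}, the genericity statement of Corollary~\ref{homspace}, and finally Proposition~\ref{cor:toricdegen}, which does the work of transporting the abstract degeneration to the given toric variety $\PP(\nabla)$.

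First I would let $V$ be a general codimension $n-m-1$ linear section of $G(2,n)$ in its Pl\"ucker embedding $\PP^N$, $N=\binom{n}{2}-1$. By Bertini $V$ is smooth and of the expected dimension $n+m-3$ (since $n-m-1\le n-1<\dim G(2,n)$ for the relevant $n$). As $G(2,n)$ is a rational homogeneous manifold with $\omega_{G(2,n)}=\CO(-n)$ and $\sum_{k}m_k=n-m-1<n=i_{G(2,n)}$, Corollary~\ref{homspace} applies: $V$ is a smooth Fano variety and a generic point on its component of the Hilbert scheme of $\PP^N$. Moreover $\A_n$ is a combinatorial sphere (it is the boundary complex of a simplicial polytope) and $T^2_{A_n}=0$ by Theorem~\ref{t2a}, so the hypotheses of Proposition~\ref{cor:toricdegen} on the pair $(V,\A_n)$ are met as soon as one produces a flat degeneration of $V$ to $\PP(\A_n\ast\Delta_m)$.

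To produce that degeneration I would argue exactly as in Theorem~\ref{thm:degen}: a circular term order takes the Pl\"ucker ideal of $G(2,n)$ to the Stanley--Reisner ideal of $\A_n\ast\Delta_{n-1}$, where the $n$ cone vertices of $\Delta_{n-1}$ are the Pl\"ucker coordinates of the $n$ sides of the $n$-gon. Cutting $V$ out of $G(2,n)$ by $n-m-1$ general hyperplanes and specializing each of them to the coordinate of one of these cone vertices produces the complex $\A_n\ast\Delta_m$; since $\A_n\ast\Delta_m$ is pure of dimension $n+m-3$ with $c_{n-2}$ facets, and $\deg V=\deg G(2,n)=c_{n-2}$, Proposition~\ref{stzel} confirms that the specialized generators still form a Gr\"obner basis, so the combined family is flat and $V$ degenerates to $\PP(\A_n\ast\Delta_m)$. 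Applying Proposition~\ref{cor:toricdegen} to $V$, $\K=\A_n$ and the given $\nabla$ then yields that $V$ --- hence a codimension $n-m-1$ linear section of $G(2,n)$ --- degenerates to $\PP(\nabla)$.

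The only genuinely delicate step is the bookkeeping in the third paragraph: verifying that specializing the general hyperplanes yields precisely the Stanley--Reisner ideal of $\A_n\ast\Delta_m$ rather than a strictly larger monomial ideal, and that flatness is preserved. This is exactly why the degree count is carried out and Proposition~\ref{stzel} is invoked, and it runs in complete parallel with the Mukai-variety computations of Section~\ref{sec:tri}. The extreme case $m=n-1$ involves no hyperplanes at all and is immediate from \cite[Proposition~3.7.4]{st:alg} together with Theorem~\ref{t2a} and Proposition~\ref{cor:toricdegen}.
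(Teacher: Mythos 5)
Your proof is correct and runs along essentially the same lines as the paper, which simply cites Sturmfels' circular-order Gr\"obner degeneration, Corollary~\ref{homspace}, Theorem~\ref{t2a}, and Proposition~\ref{cor:toricdegen}; your extra detail (Bertini, the hyperplane-specialization trick from Corollary~\ref{cor:secdegen}, and the appeal to Proposition~\ref{stzel}) is exactly the unpacking the paper's terse statement leaves to the reader.
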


\begin{remark} Note that Corollary \ref{g2ndegen} identifies toric
degenerations which do not arise via the standard method of finding
toric initial ideals of the Pl\"ucker ideal.  Consider the simple
example of
$$G(2,4)=V(x_{12}x_{34}-x_{13}x_{24}+x_{14}x_{23})\subset\PP^5.$$
Clearly $G(2,4)$ degenerates to $X=V(x_{12}x_{34}-x_{13}^2)$, but this
does not correspond to an initial ideal of $G(2,4)$, as say
$V(x_{12}x_{34}-x_{13}x_{24})$ does. Nonetheless, we can see this
degeneration with our methods: the moment polytope of $X$ has a
regular unimodular triangulation of the form $\A_4*\Delta_3$.
\end{remark}

We proceed to compute the $T^1_{a-b}(\A_n)$. If $b = \{\delta_{ik},
\delta_{jl} \}$ consists of two crossing diagonals set $Q_b$ to be the
corresponding inscribed quadrangle $Q_{ijkl}$.

\begin{theorem} \label{t1a} The structure of $T^1_{A_n}$ is given by
\begin{list}{\textup{(\roman{temp})}}{\usecounter{temp}}
\item For $a,b \subseteq V(\A_n)$, if $a \in \A_n$, $b \subseteq
V(\link(a))$ and $b$ consists of two crossing diagonals with $Q_b \in
\mathcal{P}_a$ then $\dim_k T^1_{a-b}(\A_n) = 1$, otherwise
$T^1_{a-b}(\A_n) =0$.
\item There is a one-to-one correspondence between inscribed
quadrangles in the $n$-gon and a minimal set of generators for the
$A_n$-module $T^1_{A_n}$.
\end{list}
\end{theorem}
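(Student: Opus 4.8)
The plan is to run the same two‑stage argument used for Theorem~\ref{t2a} (reduce to a single dual‑associahedron factor, then compute locally), replacing the input ``Proposition~\ref{t2flag}'' by its $T^1$ analogue, and then to read off the module structure in (ii).

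\textbf{Step 1 (reduction to one dual‑associahedron factor).} By Theorem~\ref{topopen} I may assume $a\in\A_n$ and $b\subseteq V(\link(a,\A_n))$, and by Proposition~\ref{aempty} we have $T^1_{a-b}(\A_n)\simeq T^1_{\emptyset-b}(\link(a,\A_n))$. By Lemma~\ref{link}, $\link(a,\A_n)=\A_{n_0}\ast\dots\ast\A_{n_{r+1}}$, the factors corresponding to the polygons $P_0,\dots,P_{r+1}\in\mathcal{P}_a$. Exactly as in the proof of Theorem~\ref{t2a}, using $A_{\K\ast\mcL}=A_\K\otimes_k A_\mcL$ and reading off the multigraded pieces of the (necessarily graded) sequence in Proposition~\ref{tensor} --- the summand $T^1_{A_\K}\otimes_k A_\mcL$ can be nonzero only in degrees that are nonnegative in the variables of $\mcL$, and symmetrically --- one gets that $T^1_{\emptyset-b}$ of a join vanishes unless $b$ lies in the vertex set of a \emph{single} join factor, in which case it equals $T^1_{\emptyset-b}$ of that factor. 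So the whole computation reduces to determining $T^1_{\emptyset-b}(\A_m)$ for $m\ge4$, keeping track of which factor of $\link(a,\A_n)$ carries $b$.

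\textbf{Step 2 (the local computation).} Since $\A_m$ is a flag complex, the same ingredients as in Proposition~\ref{t2flag} --- Lemma~\ref{bin}, Lemma~\ref{bface}, and the fact that $U_b=\widetilde{U}_b$ unless $\partial b\subseteq\A_m$ --- show $T^1_{\emptyset-b}(\A_m)=0$ unless $b$ is a vertex or a non‑edge. If $b=\{\delta\}$ is one diagonal, Theorem~\ref{topopen} (reduced cohomology) gives $T^1_{\emptyset-\{\delta\}}(\A_m)\simeq\widetilde{H}^0(|\A_m|\setminus|\overline{\st}(\delta)|)$; by Lemma~\ref{link} the link of $\delta$ is a combinatorial sphere, so $\overline{\st}(\delta)$ is a full‑dimensional ball in the sphere $|\A_m|$, its complement is an open ball, and this group vanishes. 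If $b=\{\delta_{ik},\delta_{jl}\}$ is a non‑edge, then $\emptyset\in U_b$, so $\langle U_b\rangle$ is a cone and $T^1_{\emptyset-b}(\A_m)\simeq H^0(\langle U_b\rangle,\langle\widetilde{U}_b\rangle)$ is $k$ exactly when $\langle\widetilde{U}_b\rangle=|\A_m|\setminus|\partial b\ast L_b|$ is empty, and $0$ otherwise. By Lemma~\ref{boundb}, $L_b=B_{ijkl}$; for $m\ge5$ this is an $(m-5)$‑ball, so $\partial b\ast L_b=S^0\ast B_{ijkl}$ is an $(m-4)$‑ball and cannot fill the $(m-4)$‑sphere $|\A_m|$, while for $m=4$ one has $L_b=\Delta_{-1}$ and $\partial b\ast L_b=S^0=\A_4$. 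Hence $T^1_{\emptyset-b}(\A_m)$ is one‑dimensional exactly when $m=4$ and $b$ is the unique non‑edge of $\A_4$, and vanishes in every other case. Feeding this back through Step~1: $T^1_{a-b}(\A_n)\ne0$ precisely when $a\in\A_n$, $b\subseteq V(\link(a,\A_n))$, and $b$ is carried by a factor $\A_{n_s}=\A_4$ of $\link(a,\A_n)$ --- equivalently when $b$ consists of two crossing diagonals and the quadrangle $Q_b$ is one of the pieces in $\mathcal{P}_a$ --- and then $\dim_k T^1_{a-b}(\A_n)=1$. This is (i).

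\textbf{Step 3 (generators).} For (ii), I would organize the nonzero homogeneous pieces by their common non‑edge: to an inscribed quadrangle $Q$, with pair of diagonals $b_Q$, associate the subset $a_0(Q)\subseteq V_n$ consisting of those of the four sides of $Q$ that are genuine diagonals of the $n$‑gon. The partitions $\mathcal{P}_a$ containing $Q$ are exactly those with $a\supseteq a_0(Q)$ and $Q\in\mathcal{P}_a$, so $a_0(Q)$ is the unique minimal such $a$. One checks that $M_Q:=\bigoplus_{a:\,Q\in\mathcal{P}_a}T^1_{a-b_Q}(\A_n)$ is a graded $A_n$‑submodule (multiplication by $x_v$ keeps the ``denominator'' $b_Q$ unless $v\in b_Q$, in which case it lands in a single‑diagonal piece, which is zero by Step~2), that $T^1_{A_n}=\bigoplus_Q M_Q$, and that $M_Q$ is cyclic, generated by its one‑dimensional piece in the minimal multidegree determined by $a_0(Q)$; then graded Nakayama gives a minimal generating set with exactly one element per inscribed quadrangle. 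The one genuinely non‑formal point --- and the main obstacle --- is verifying that $M_Q$ is cyclic on that bottom element, i.e.\ that the multiplication maps $x_v\colon T^1_{a-b_Q}(\A_n)\to T^1_{(a\cup\{v\})-b_Q}(\A_n)$ are isomorphisms whenever the target is nonzero. I would check this directly from the relative‑cohomology description in Theorem~\ref{topopen}, using that passing from $a$ to $a\cup\{v\}$ (with $Q$ still a piece) merely subdivides one of the outer polygons of $\mathcal{P}_a$ and leaves the relevant group unchanged.
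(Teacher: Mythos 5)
Your argument is correct and lands on the same core ideas the paper uses (Theorem~\ref{topopen}, Proposition~\ref{aempty}, Lemma~\ref{link}, Lemma~\ref{boundb}), but the routing of Step~2 is slightly different: the paper simply invokes two results of \cite{ac:def} as black boxes --- Lemma~4.3 there for $\dim T^1_{a-b}\in\{0,1\}$ and for vanishing when $b$ is a vertex, and Theorem~4.6 there for the criterion ``$T^1_{\emptyset-b}(\K)\neq0$ iff $\K=L_b\ast\partial b$'' --- and applies the criterion directly to the full join $\link(a,\A_n)$, whereas you first use Proposition~\ref{tensor} to collapse to a single join factor $\A_m$ and then re-derive the criterion by hand from the relative cohomology in Theorem~\ref{topopen}. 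Both are valid; the paper's version is shorter because it avoids the multigrading bookkeeping in the tensor sequence, but yours is a bit more self-contained and makes the role of the individual $\A_{n_i}$ factors clearer. For (ii), your $M_Q$-decomposition is a cleaner way to phrase the same thing, and you correctly flag that the cyclicity of $M_Q$ on its bottom degree piece is the nontrivial step; the paper handles this in one line (``an element in the one-dimensional $T^1_{\mathbf{c}}$ equals $\lambda\,x^{\mathbf{a}-\chi_{a(Q)}}\cdot(\text{generator})$, \ldots clearly $x^{\mathbf{a}-\chi_{a(Q)}}\neq0$ in $A_n$''), which is essentially the same assertion your sketch supplies a reason for, so you are not missing anything the paper proves.
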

\begin{proof} That $\dim T^1_{a-b} \in\{0,1\}$ is a general fact for
combinatorial manifolds (without boundary), see (\cite[Lemma
4.3]{ac:def}). Moreover $T^1_{a-b} = 0$ if $b$ is a vertex
(loc.cit.). Thus by Lemma \ref{bface}, Theorem \ref{topopen} and Lemma
\ref{boundb} we are left with the case $b \notin \A_n$ and $|b|=2$.

Assume first that $a = \emptyset$. For a combinatorial manifold $\K$,
\cite[Theorem 4.6]{ac:def} says $T^1_{\emptyset-b}(\K) \ne 0$ iff $\K
= L_b \ast \partial b$ i.e.\ the suspension of $L_b$. But if $n \ge
5$, Lemma \ref{boundb} tells us that $|L_b|$ is a ball, so this is
impossible unless $n=4$. Indeed we do have $\A_4 = \{\emptyset \}
\ast \partial b$ where $b$ consists of the two diagonals.

If $a \ne \emptyset$, Proposition \ref{aempty} and Lemma \ref{link}
tell us that $T^1_{a-b} \simeq T^1_{\emptyset -b}(\A_{n_0} \ast
\A_{n_1} \ast \dots \ast \A_{n_{r+1}})$ where the product is over the
polygons in $\mathcal{P}_a = \{P_0, \dots , P_{r+1} \}$. Now diagonals
in different $P_i$ will not cross so $b \subseteq V(\A_{n_i})$ for one
$i$ which we may assume is $0$. This means that $L_b(\link(a)) =
L_b(\A_{n_0}) \ast \A_{n_1} \ast \dots \ast \A_{n_{r+1}}$, which is a
sphere iff $n_0 = 4$, i.e.\ $P_0 = Q_b$. On the other hand if $n_0 =
4$ then $\link(a) = \partial b \ast \A_{n_1} \ast \dots \ast
\A_{n_{r+1}} = \partial b \ast L_b$ so $T^1_{a-b} \ne 0$. This proves
(i).

To prove (ii) we may assume $n > 4$. Consider the function that takes
an inscribed quadrangle to the set $a(Q)$ consisting of diagonals
which are edges of $Q$. Note there could be 1,2,3 or 4 diagonals in
$a(Q)$ depending on the placement of $Q$. If $b(Q)$ is the set of
diagonals in $Q$ let $\mathbf{c}(Q) = \chi_{a(Q)} - \chi_{b(Q)} \in
\ZZ^{n(n-3)/2}$ where $\chi_A$ is the characteristic vector of the
subset $A$. Then $Q \mapsto \text{generator of } T^1_{\mathbf{c}(Q)}$
sets up the correspondence.

Indeed from (i) we know that $T^1_{\mathbf{c}} \ne 0$ means that
$\mathbf{c} = \mathbf{a} - \mathbf{b}$, with disjoint supports $a$ and
$b$, and $\mathbf{b} = \chi_{b(Q)}$ for some $Q$ with $Q \in
\mathcal{P}_a$. The last inclusion implies that $a(Q) \subseteq a$. An
element in the one-dimensional $T^1_{\mathbf{c}}$ equals $\lambda \,
x^{\mathbf{a} - \chi_a(Q)} \cdot (\text{generator associated to
$Q$})$, where $\lambda$ is some constant and clearly $x^{\mathbf{a} -
\chi_a(Q)} \ne 0$ in $A_n$.\end{proof}

For the sake of completeness we prove a result about deformation
spaces for the Stanley-Reisner scheme of $\mathcal{A}_n \ast
\Delta_m$. Let $y_0, \dots , y_m$ be the variables corresponding to
vertices of 
$\Delta_m$. Since $\dim T^1_{A_\mathcal{K},\mathbf{c}}$ is $0$ or $1$
we may represent a basis element by a rational monomial
$x^\mathbf{c}$. We are using cyclic indices on the $n$-gon, if e.g.\
$n=7$ then $j = 5, \dots , 2$ means $j \in \{5, 6, 7, 1, 2\}$.

Consider the sets of $T^1$elements
\begin{align}\bigg\{\frac{x_{ij}x_{kl}}{x_{i+1,j}x_{i,j-1}} &: i = 1, \dots
, n, \, j=i+3,\, \delta_{kl} \in \vertices (\link(\{\delta_{ij}\}))
\cup \{\delta_{i,j}\} \bigg\}\label{s1}\\
\bigg\{\frac{x_{ij}y_{k}}{x_{i+1,j}x_{i,j-1}} &: i = 1, \dots , n, \, j=i+
3, \, k = 0, \dots , m\bigg\} \label{s2}\\
\bigg\{\frac{x_{i,j-1}x_{i,j+1}}{x_{i,j}x_{j-1,j+1}} & : i = 1, \dots , n,
\, j = i+3, \dots , i-3\bigg\} \label{s3}\\
\bigg\{\frac{x_{ij}x_{i+1,j-1}}{x_{i+1,j}x_{i,j-1}} &: i = 1, \dots , n, \,
j = i+4, \dots , i-2\bigg\} \, . \label{s4}
\end{align} Let $\mathcal{B}_{n,m}$ be the union of these four sets.

\begin{figure}
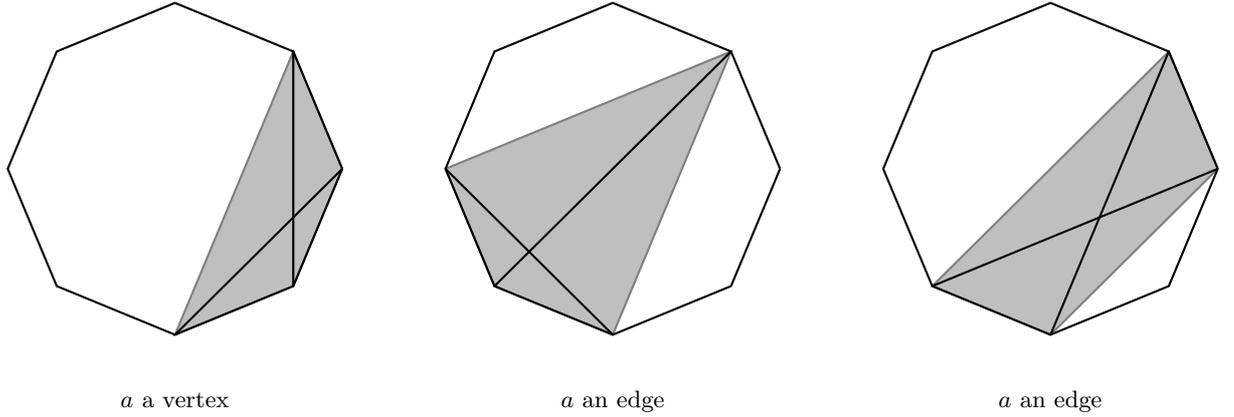
 \centering \subfloat[$a$ a
vertex]{\octb}
\subfloat[$a$ an
edge]{\octc} \subfloat[$a$
an edge]{\octd}
\caption{Three types of contributions to $T^1$.}
  \label{types}
\end{figure}

\begin{theorem} If $n\ge 5$ the versal base space of the
Stanley-Reisner scheme of $\mathcal{A}_n \ast \Delta_m$ is smooth of
dimension $\frac{1}{2}n(n^2-4n-3) + n(m+1)$ unless $n=6$ and
$m=-1$. The set $\mathcal{B}_{n,m}$ is a basis for the tangent
space. For the exceptional case $\A_6$ one must add $1$ to the formula
because of non-algebraic deformations.
\end{theorem}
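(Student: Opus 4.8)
The plan is to establish the theorem in two stages: first compute the dimension of $T^1_{A_{\mathcal{A}_n \ast \Delta_m}}$ in degree $0$ (which will be the stated dimension of the versal base once smoothness is known), verifying along the way that $\mathcal{B}_{n,m}$ is a basis, and then prove smoothness by checking that the obstruction map vanishes, which follows almost immediately from $T^2_{A_n} = 0$. For the dimension count, I would use Theorem~\ref{t1a} together with Proposition~\ref{tensor}. By Proposition~\ref{tensor}, $T^1_{A_{\mathcal{A}_n}\otimes_k k[y_0,\dots,y_m]}$ sits in a split exact sequence whose pieces are $T^1_{A_n}\otimes_k k[y_0,\dots,y_m]$ and $T^1_{k[y_0,\dots,y_m]}\otimes_k A_n = 0$; so effectively we must count the degree-$0$ part of $T^1_{A_n}$ as a module over $A_n[y_0,\dots,y_m]$. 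Using Theorem~\ref{t1a}(i), the relevant multidegrees $\mathbf{c} = \mathbf{a}-\mathbf{b}$ are those with $b$ a pair of crossing diagonals $\{\delta_{ik},\delta_{jl}\}$, $a \in \mathcal{A}_n$ with $Q_b \in \mathcal{P}_a$, and $|\mathbf{a}| - |\mathbf{b}|$ summing (with the $y$-variables allowed) to $0$; the degree-$0$ condition forces $\mathbf{a}$ to have total degree exactly $2$. Enumerating these: for a fixed crossing pair with quadrangle $Q$, the degree-$0$ lifts $x^{\mathbf{a}-\mathbf{b}}$ of the generator correspond to choosing the ``extra'' degree-one numerator from the vertices of $\link(\{\delta_{ij}\})$, from a $y_k$, or from edges adjacent to $Q$; this is precisely the bookkeeping encoded in the four families \eqref{s1}--\eqref{s4}, split according to whether $Q$ has one diagonal-edge, is a ``quadrangle at distance one'', etc. (Figure~\ref{types} illustrates the three geometric types.) Summing the sizes of the four families gives the claimed $\frac{1}{2}n(n^2-4n-3) + n(m+1)$; the arithmetic is a routine, if slightly tedious, count over cyclic indices.

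For smoothness, the key input is Theorem~\ref{t2a}: $T^2_{A_n} = 0$ for all $n$. Since $A_{\mathcal{A}_n\ast\Delta_m} = A_n \otimes_k k[y_0,\dots,y_m]$, Proposition~\ref{tensor} (for $i=2$) gives $T^2_{A_{\mathcal{A}_n\ast\Delta_m}} = 0$ as well. The obstruction space for deformations of the affine cone (equivalently the versal deformation of the Stanley-Reisner scheme in the appropriate local-cohomology-truncated sense) is a subspace of $T^2_{A_{\mathcal{A}_n\ast\Delta_m}}$, hence vanishes, so the versal base is smooth of dimension $\dim T^1$ in the relevant degree. One must be slightly careful about which cotangent module governs the versal deformation of $\PP(\mathcal{A}_n\ast\Delta_m)$ versus the affine scheme $\mathbb{A}(\mathcal{A}_n\ast\Delta_m)$: for the projective scheme the tangent and obstruction spaces are built from the negative- and zero-degree (and for obstructions, only degree zero) pieces, and these are again controlled by vanishing of $T^2_{A_{\mathcal{A}_n\ast\Delta_m}}$. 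That is why the statement is phrased as a versal base space being smooth — no obstruction can arise.

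The exceptional case $n=6$, $m=-1$ needs separate attention. Here $\mathcal{A}_6$ is the boundary of the triaugmented triangular prism (equivalently $T_9$), and $\PP(\mathcal{A}_6)$ admits non-algebraic deformations: the known computation (referenced as \cite[Theorem 5.6]{ac:def}, compare the treatment of $T_{11}$) shows that $T^1$ in degree $0$ has dimension one larger than $|\mathcal{B}_{6,-1}|$, accounting for a deformation that does not preserve projectivity. So for $n=6$, $m=-1$ I would simply cite that the dimension formula must be corrected by $+1$, and note that $\mathcal{B}_{6,-1}$ still spans the ``algebraic'' part of the tangent space; smoothness of the versal base is unaffected since $T^2 = 0$ still holds.

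The main obstacle is the dimension count: one must show that the four families \eqref{s1}--\eqref{s4} are disjoint, that together they exhaust all degree-$0$ basis elements of $T^1$ (using Theorem~\ref{t1a}(i) to rule out anything else), and that their cardinalities sum correctly. The subtlety is that a given crossing pair $b$ contributes several degree-$0$ elements, organized by the ``shape'' of the minimal $a$ and the position of $Q_b$ in the $n$-gon (whether an edge of $Q_b$ is a genuine diagonal or a side of the $n$-gon, which is what distinguishes the families); one must verify no double-counting across families and that the cyclic-index ranges in \eqref{s1}--\eqref{s4} are exactly right. The smoothness half, by contrast, is essentially immediate from $T^2 = 0$.
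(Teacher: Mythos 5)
Your overall plan matches the paper's (compute $(T^1_A)_0$ via Theorem~\ref{t1a} and Proposition~\ref{tensor}, then argue unobstructedness from $T^2_{A_n}=0$), and your arithmetic for the four families is consistent with the paper's cardinalities. However, the smoothness half contains a genuine gap that the paper's proof is designed to close, and you have inverted the logic around the exceptional case.

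The assertion that the obstruction space of $\Def_X$ for $X=\PP(\mathcal{A}_n\ast\Delta_m)$ ``is a subspace of $T^2_{A_{\mathcal{A}_n\ast\Delta_m}}$, hence vanishes'' is not correct as stated, and your parenthetical caveat does not repair it. The cotangent module $T^2_A$ of the affine cone does not directly govern obstructions to deforming the projective scheme; there is no comparison theorem cited for $T^2$ analogous to the Kleppe result used for $T^1$. The paper instead routes through the local Hilbert functor: $\Def_{X/\mathbb{P}^N}$ is unobstructed because its obstruction space is $(T^2_A)_0$ by \cite[Proposition 5.4]{ac:def}, and then the forgetful map $\Def_{X/\mathbb{P}^N}\to\Def_X$ is shown to be smooth by checking $H^1(\Theta_{\mathbb{P}^N}\otimes\mathcal{O}_X)=0$, which via the Euler sequence and Hochster's formula reduces to $H^2(\mathcal{A}_n\ast\Delta_m,k)=0$. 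This is precisely where the exceptional case $n=6$, $m=-1$ arises: that cohomology is nonzero exactly then (the K3 case), the forgetful map is not smooth, and this is the source of the extra non-algebraic deformation. Your sketch treats the $+1$ as an unexplained citation and then claims smoothness for $\mathcal{A}_6$ follows again from $T^2=0$; in the paper smoothness for that case is established by the explicit versal family of Proposition~\ref{a6}, not by an abstract obstruction argument. You also need the identification $T^1_X\simeq(T^1_A)_0$ (Kleppe's theorem, requiring $A$ Cohen-Macaulay and $\dim X\ge 3$), and the paper separately handles the low-dimensional cases $n=5$, $m\in\{-1,0\}$ via the exact sequence relating $T^1_X$, $H^1(\Theta_X)$, and $H^0(\mathcal{T}^1_X)$; your proposal omits both of these steps.
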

\begin{proof} Let $X$ be the Stanley-Reisner scheme, $A$ the
Stanley-Reisner ring and $N = \frac{1}{2} n (n-3) + m$ so that $X
\subset \mathbb{P}^N$. The local Hilbert functor
$\Def_{X/\mathbb{P}^N}$ is unobstructed by Theorem \ref{t2a},
Proposition \ref{tensor} and \cite[Proposition 5.4]{ac:def}. The
forgetful map $\Def_{X/\mathbb{P}^N} \to \Def_{X}$ is smooth if
$H^1(\Theta_{\mathbb{P}^N} \otimes \mathcal{O}_X) = 0$. But by the
Euler sequence and a result of Hochster (see e.g.\ \cite[Theorem
2.2]{ac:def}) $H^1(\Theta_{\mathbb{P}^N} \otimes \mathcal{O}_X) \simeq
H^2( \mathcal{O}_X) \simeq H^2(\mathcal{A}_n \ast \Delta_m,k)$. This
group vanishes unless $m=-1$ and $n=6$, i.e.\ the K3 case. This case
is covered by Proposition \ref{a6} below. The dimension of the base
space is thus $\dim_k T^1_X$. Since $A$ is Cohen-Macaulay,
\cite[Theorem 3.9]{kle:def} tells us that $ T^1_X  \simeq {(T^1_A)}_0$
if $\dim X \ge 3$. 

Aside from the special case 
$\A_6$, the $X$ with $\dim X \le 2$ and $n\ge 5$ correspond to either
the boundary of the $5$-gon or the boundary of the $5$-gon $\ast
\Delta_0$. In these 2 cases the smoothness of the forgetful map and
\cite[Proposition 5.4]{ac:def} imply that $H^0(\mathcal{T}^1_X) \simeq
{(T^1_A)}_0$. On the other hand there is always an exact sequence
$$0 \to H^1(\Theta_X) \to T^1_X \to H^0(\mathcal{T}^1_X)  \to
H^2(\Theta_X) \, .$$ A computation in each case shows that
$H^1(\Theta_X) = H^2(\Theta_X)=0$ so we may compute $T^1_X$ as $
{(T^1_A)}_0$ directly or as below. For $\A_6$ (as for all
Stanley-Reisner K3 surfaces, see \cite[Theorem 5.6]{ac:def}) there is
a one dimensional contribution of non-algebraic deformations coming
from $H^1(\Theta)$.

To compute $ {(T^1_A)}_0$ we must consider $T^1_{a -b}(\A_{n})$ where
$b$ is a pair of crossing diagonals and $a$ is a vertex or edge. There
are three types of contributions as illustrated in Figure
\ref{types}. It follows from Theorem \ref{t1a} that if $a$ is a vertex
then $T^1_{a -b}(\A_{n}) \ne 0$ iff $a = \{\delta_{ij}\}$ with $j-i
\equiv 3 \bmod n$ and $b = \{\delta_{i+1,j}, \delta_{i,j-1}\}$. These
elements are module generators for the sets \eqref{s1} and \eqref{s2}
above. The first has $\frac{1}{2}n(n-3)(n-4)$ elements and the second
$n(m+1)$ elements. If $a$ is an edge (not containing a vertex as
above) then $T^1_{a -b}(\A_{n}) \ne 0$ iff the two diagonals in $a$
are edges of $Q_b$ and the other two edges of $Q_b$ are edges of the
$n$-gon. There are two possibilities leading to the sets \eqref{s3} of
cardinality $n(n-5)$ and \eqref{s4} of cardinality
$\frac{1}{2}n(n-5)$. This adds up to the formula in the statement.
\end{proof}

\begin{remark} If $n=4$ then the Stanley-Reisner ring $A$ of
$\mathcal{A}_4 \ast \Delta_m$ is $k[x_0, x_1, y_0, \dots ,
y_m]/(x_0x_1)$, thus the base space of $\Proj A$ is smooth of
dimension $\frac{1}{2}(m+1)(m+2)$.
\end{remark}

Even though the base space is smooth it is a non-trivial task to
compute the versal family. We give it only in the case $n=6$ as an
example. The computations are done by lifting equations and relations
using the program Maple. The $D_6$ symmetry helps shorten the task.
In this case the sets \eqref{s1} -- \eqref{s4} have cardinality $18$,
$6(m+1)$, $6$ and $3$. Set
$$h_{i,i+1} = x_{i-1,i+2}(t_{i,1}x_{i-1,i+2} +  t_{i,2}x_{i-1,i+3} +
t_{i,3}x_{i-2,i+2} + \sum_{k=0}^m r_{i,k}y_k)$$ for $i = 1, \dots ,
6$, where the $t_{i,l}$ and $r_{i,k}$ are parameters. Let $u_i$ and
$r_i$ be parameters dual to the sets \eqref{s3} and \eqref{s4}. To
avoid the non-algebraic deformations in the case $m=-1$ we use the
functor $\Def^{a}_X$, see \cite[Section 6]{ac:def}.

\begin{proposition} \label{a6} The versal algebraic family of the
Stanley-Reisner scheme of $\mathcal{A}_6 \ast \Delta_m$ is defined by
the 15 equations
\begin{multline*} x_{{i,i+2}}x_{{i+1,i-1}}+h_{{i,i+1}}x_{{i+2,i-1}} +
s_{{i+1}}h_{{i+1,i+2}}h_{{i,i-1}}- u_{{i}}h_{{i+1,i+2}}x_{{i+3,i-1}}
\\ \shoveright{- u_{{i+1}}h_{{i,i-1}}x_{{i+2,i-2}}
-u_{{i}}u_{{i+1}}h_{{i-2 ,i-1}}h_{{i+2,i+3}}, \quad i=1, \dots ,6} \\
\shoveleft{x_{{i,i+3}}x_{{i+1,i-1}}-s_{{i}}h_{{i,i+1}}x_{{i+3,i-1}}-s_{{i+1}}x_{{i+1,i+3}}h_{{i,i-1}}+u_{{i}}x_{{i+1,i+3}}x_{{i+3,i-1}}}\\
+u_{{i+3}}(
u_{{i-1}}h_{{i,i+1}}x_{{i,i+2}}+u_{{i+1}}h_{{i,i-1}}x_{{i,i-2}})
-u_{{i+1}}s_{{i}}h_{{i,i-1}}h_{{i+3,i-2}} - u_
{{i+3}}s_{{i+2}}h_{{i,i+1}}h_{{i,i-1}}\\
\shoveright{-u_{{i-1}}s_{{i+1}}h_{{i,i+1}}h_{{i+2,i+3}}-u_{{i}}u_{{i+1}}u_{{i-1}}h_{{i+2,i+3}}h_{{i+3,i-2}},
\quad i=1, \dots ,6} \\
x_{{i,i+3}}x_{{i+2,i-1}}+s_{{i}}x_{{i,i+2}}x_{{i+3,i-1}}
-s_{{i+1}}s_{{i+2}}h_{{i,i-1}}h_{{i+2,i+3}}-u_{{i
}}u_{{i+2}}x_{{i+3,i-1}}^{2}-u_{{i+3}}u_{{i-1}}x_{{i,i+2}}^{2}\\
+x_{{i+3,i-1}}(
u_{{i}}s_{{i+2}}h_{{i+2,i+3}}+u_{{i+2}}s_{{i+1}}h_{{i,i-1}})
+x_{{i,i+2}}
(u_{{i+3}}s_{{i+2}}h_{{i,i-1}}+u_{{i-1}}s_{{i+1}}h_{{i+2,i+3}} ) \\
-s_{{i}}u_{{i+1}}u_{{i-2}}h_{{i+2,i+3}}h_{{i,i-1}} -u_{{i+1}}u_{{
i-2}}
(u_{{i+3}}u_{{i+2}}h_{{i,i-1}}^{2}+u_{{i-1}}u_{{i}}h_{{i+2,i+3}}^{2} )
\quad i=1, \dots ,3 \end {multline*} (all indices are modulo $6$
except the $s_i$ which must be taken modulo $3$) over the smooth space
with parameters $t_{1,1}, t_{1,2}, t_{1,3}, \dots , t_{6,1}, t_{6,2},
t_{6,3}, r_{1,0}, \dots , r_{6,m}, u_1, \dots ,u_6, s_1, s_2, s_3$.
\end{proposition}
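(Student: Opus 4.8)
The plan is to obtain the displayed family as the outcome of the standard order‑by‑order lifting of the Stanley--Reisner ideal, using the $D_6$‑symmetry to keep the computation manageable, and then to verify that the resulting $15$ equations define a flat family over the asserted smooth base. The only genuine content of the statement (beyond the preceding theorem) is the explicit list of equations, so the proof is essentially a controlled computation together with a verification.

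By Theorem~\ref{t2a} and Proposition~\ref{tensor} we have $T^2_A=0$, so the embedded deformation functor, and hence (via the smooth forgetful map appearing in the preceding theorem) the functor $\Def^a_X$, is unobstructed with versal base smooth of dimension $\dim_k(T^1_A)_0 = \tfrac12 n(n^2-4n-3)+n(m+1)$ and tangent space spanned by $\mathcal{B}_{6,m}$; passing to $\Def^a_X$ instead of $\Def_X$ is exactly what discards the one‑dimensional non‑algebraic contribution from $H^1(\Theta_X)$ present when $m=-1$. It therefore suffices to exhibit one flat family over affine space with coordinates $t_{i,l},r_{i,k},u_i,s_i$ whose Kodaira--Spencer map at the origin carries these coordinates to the basis $\mathcal{B}_{6,m}$ of $(T^1_A)_0$: versality is then automatic, since $T^2_A=0$ forces the classifying map to the versal deformation to be smooth, and the base dimensions coincide.

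Concretely, I would start from the $15$ monomial generators $x_{ik}x_{jl}$ indexed by the crossing pairs of diagonals of the hexagon. By Theorem~\ref{t1a} the first‑order deformation of the generator attached to a quadrangle $Q_{ijkl}$ is governed precisely by the $T^1$‑basis elements whose non‑edge is $b(Q_{ijkl})$, and these split into the three geometric types of Figure~\ref{types}: the type with $a$ a vertex supplies the sets \eqref{s1} and \eqref{s2}, which I collect into the auxiliary forms $h_{i,i+1}=x_{i-1,i+2}(t_{i,1}x_{i-1,i+2}+t_{i,2}x_{i-1,i+3}+t_{i,3}x_{i-2,i+2}+\sum_k r_{i,k}y_k)$, while the two types with $a$ an edge supply the sets \eqref{s3} and \eqref{s4} with parameters $u_i$ and $s_i$ (the latter taken mod $3$, reflecting the $\ZZ/3$‑periodicity of \eqref{s4}). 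Writing out the linear‑in‑parameters part of the three families of equations in these variables and then lifting order by order --- at each stage adding homogeneous corrections in the parameters so that every first syzygy among the $15$ leading monomials continues to hold --- produces the displayed polynomials; each step is solvable because $T^2_A=0$, so one only has to record the corrections. Here the $D_6=\Aut(\mathcal{A}_6)$‑action reduces the lifting to a few orbit representatives of generators and syzygies, and in practice the lifting is carried out in Maple.

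It then remains to check directly that the ideal generated by the $15$ displayed polynomials is flat over the smooth parameter space --- equivalently, that specializing all parameters to zero recovers $I_{\mathcal{A}_6\ast\Delta_m}$ and that every first syzygy of that monomial ideal lifts to a syzygy of the perturbed equations --- after which the Kodaira--Spencer map is read off from the linear terms and versality follows as above. I expect the main obstacle to be the sheer bookkeeping: there is no conceptual difficulty (that is the content of $T^2_A=0$), but generating and matching the higher‑order corrections and confirming that all first syzygies lift is lengthy, and the $D_6$‑symmetry is the indispensable labour‑saving device.
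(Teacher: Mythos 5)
Your proposal matches the paper's approach: the authors state that the versal family is computed by lifting equations and relations order by order in Maple, with the $D_6$ symmetry used to reduce the bookkeeping, and that versality/unobstructedness follows from $T^2_A=0$. Your additional remarks about reading off the Kodaira--Spencer map from the linear terms and verifying flatness via lifted syzygies are correct elaborations of what that computation implicitly involves.
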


\begin{remark} The equations are written so that the first 3 terms
define linear sections of $G(2,6)$ with the standard Pl\"{u}cker
relations. Note this is achieved over the subspace where all
$u_i=0$. In general one gets easily constructed deformations to
sections of $G(2,6)$ when one omits the first order deformations in
set \eqref{s3}. On the other hand taking them along complicates
matters extremely.
\end{remark}

\section{Unobstructed spheres via starring and unstarring} Stellar
subdivisions are related to deformations of Stanley-Reisner rings via
\begin{proposition}\label{stellar} If $\K$ is a simplicial complex and
$\K^\prime$ is a stellar subdivision of $\K$, then $A_{\K^\prime}$
deforms to $A_{\K}$.  In particular, if $T^2_{A_{\K^\prime}} = 0$ then
$T^2_{A_{\K}} = 0$.
\end{proposition}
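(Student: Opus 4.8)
The plan is to prove that a stellar subdivision gives a one-parameter degeneration, hence a deformation, at the level of Stanley-Reisner rings, and then invoke the semicontinuity (lifting) of $T^2=0$ along deformations.

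First I would reduce to a single starring. Since $\K^\prime$ is obtained from $\K$ by a finite sequence of starrings, it suffices to treat the case $\K^\prime = \sta(f,\K)$ for a single face $f \in \K$; the general case follows by composing deformations, and the statement about $T^2$ propagates since $T^2_{A_{\K_i}} = 0$ implies $T^2_{A_{\K_{i-1}}} = 0$ inductively along the chain. So I would fix $f \in \K$, let $v$ be the new vertex, and write $\K^\prime = \sta(f,\K)$.

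Next I would construct the degeneration explicitly. Recall that $\sta(f,\K)$ replaces $\overline{\st}(f,\K) = \bar f \ast \link(f,\K)$ by $\partial f \ast \bar v \ast \link(f,\K)$. On the level of Stanley-Reisner ideals, write $f = \{i_0,\dots,i_r\}$; then $I_{\K^\prime}$ is obtained from $I_\K$ by adjoining the variable $x_v$, adding the generator $x_v \cdot x_p$ for each non-face $p$ of $\K$ meeting... — more precisely, the clean statement is that $I_{\K^\prime}$ lives in $\CC[x_u : u \in V(\K)][x_v]$ and is generated by the images of $I_\K$ together with $x_v x_{i_0}\cdots x_{i_r}$ and $x_v x_p$ for certain monomials. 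I would then exhibit a flat family over $\mathbb{A}^1$ whose special fiber is $\PP(\K^\prime)$ and whose general fiber is isomorphic to $\PP(\K)$ (or rather $\PP(\K)$ together with a coordinate on which nothing depends, which does not change the deformation-theoretic content since adding a polynomial variable is harmless by Proposition~\ref{tensor}). The cleanest route is via an initial-ideal / Gröbner degeneration: one finds a term order under which $I_{\K^\prime}$ degenerates to... no — I would instead go the other direction and realize $I_{\K^\prime}$ itself as an initial ideal of a deformed ideal, or appeal directly to the combinatorial description of starrings as a specialization, citing the deformation-theoretic computation in \cite{ac:def}. In fact the honest statement is: there is a homogeneous deformation of $\PP(\K^\prime)$ whose general fiber is $\PP(\K)$, obtained by deforming the generator $x_v x_{i_0}\cdots x_{i_r}$ to $x_v x_{i_0}\cdots x_{i_r} - t\, x_{i_0}\cdots \hat{x}_{i_j} \cdots x_{i_r}$ suitably — geometrically this is exactly the inverse of blowing down the exceptional component.

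Then, having a deformation $A_{\K^\prime} \rightsquigarrow A_{\K}$, the $T^2$ statement follows from the standard upper-semicontinuity of obstruction spaces: if $T^2_{A_{\K^\prime}}=0$ then the base of the versal deformation of $\PP(\K^\prime)$ is smooth, every deformation is versal-induced, and the generic fiber — here $\PP(\K)$ (times a polynomial ring) — is again unobstructed, so $T^2_{A_{\K}}=0$; here I would again use Proposition~\ref{tensor} to strip off any auxiliary polynomial variable. The main obstacle I expect is the bookkeeping in the explicit family: verifying flatness of the one-parameter family interpolating $I_{\K^\prime}$ and $I_{\K}$, i.e. checking that the degeneration really has the combinatorics of a starring on the special fiber and of $\K$ on the general fiber, and making sure the "extra" coordinate is handled cleanly via \ref{tensor}. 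This is combinatorially transparent but notationally heavy, so in a full write-up I would either spell out the monomial family carefully in the single-star case or point to the relevant computation in \cite{ac:def}.
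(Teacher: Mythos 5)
Your high-level strategy — reduce to a single starring, build a one-parameter flat degeneration of rings, and then pass $T^2=0$ along the family — matches the paper's. But there are two concrete problems with the proposal as written.

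First, you identify the wrong generator and hence the wrong perturbation. After starring the face $a=\{i_0,\dots,i_r\}$ at a new vertex $v$, the face $a$ is \emph{removed} (the open star of $a$ is excised), while $\partial a$ stays. So $a$ becomes a \emph{minimal non-face} of $\K'$, and the relevant new minimal generator of $I_{\K'}$ is $x_a = x_{i_0}\cdots x_{i_r}$, not $x_v x_{i_0}\cdots x_{i_r}$ (the latter is $x_v$ times the generator $x_a$, so it is not minimal). The correct deformation is simply $x_a \rightsquigarrow x_a - t\,x_v$. This lives in multidegree $\chi_v - \chi_a$, i.e.\ it is the one-dimensional $T^1_{v-a}(\K')$ that the paper computes directly from Theorem~\ref{topopen} using $\link(v,\K')=\partial a \ast \link(a,\K)$. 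Your proposed perturbation $x_vx_a \rightsquigarrow x_vx_a - t\,x_{a\setminus\{i_j\}}$ changes total degree by $2$, so it cannot even be interpreted as a graded deformation; the hedge ``suitably'' is hiding the real content. Also note the actual perturbation $x_a \rightsquigarrow x_a - tx_v$ is of negative total degree, so this is a deformation of the affine cone/ring $A_{\K'}$, not an embedded deformation of $\PP(\K')$ in a fixed $\PP^N$; your phrasing suggesting a family of projective schemes in a common ambient space is misleading.

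Second, the paper shows the first-order class lifts to an honest deformation by a clean degree argument you don't reproduce: obstructions to extending to order $k$ would lie in $T^2$ in multidegree $k(\chi_v - \chi_a)$, whose negative part $k\chi_a$ is not in $\{0,1\}^n$ once $k\geq 2$, hence that graded piece of $T^2$ vanishes by Theorem~\ref{topopen} regardless of any hypothesis on $T^2_{A_{\K'}}$. This makes the existence of the flat one-parameter family self-contained; one can also just observe the explicit family $x_a - tx_v$ is flat with general fiber $A_\K$ (eliminate $x_v$). Your closing step then has a logical slip: you pass from ``$T^2_{A_{\K'}}=0$'' to ``versal base smooth'' to ``generic fiber unobstructed'' to ``$T^2_{A_\K}=0$''. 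Unobstructedness does not imply $T^2 = 0$. The correct (and shorter) argument is plain upper-semicontinuity of $\dim T^2$ in the graded flat family, which you do mention at the outset but then replace with the incorrect chain. Fix the generator/perturbation and state the semicontinuity step directly, and you essentially have the paper's proof.
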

\begin{proof} We may assume that $\K^\prime$ is obtained from starring
a simplex $a$ of $X$ at a new vertex $v$, i.e.\ $\K^\prime =
(\K\setminus \overline{\st}(a,X)) \cup \partial a \ast v \ast
\link(a,\K)$.  Thus $\link(v,\K^\prime) = \partial a \ast L$, where $L
= \link(a,\K)$.  We get $T^1_{v-a}(\K^\prime) =
T^1_{\emptyset-a}(\partial a \ast L) \simeq H^0(\partial a \ast L) =
k$ by Theorem~\ref{topopen}.

The corresponding first order deformation with parameter say $t$ is
unobstructed since any obstructed $t^k$ would have to be in a
multigraded part of $T^2$ which vanishes since $k{\mathbf a}\not\in
\{0,1\}^n$.  Since $\partial a \in \K^\prime$, $x_a$ is a generator of
the ideal and the deformation is achieved by perturbing this monomial
to $x_a - tx_v$.
\end{proof}

We will refer to the opposite procedure of starring in a vertex in a
face, i.e.\ a stellar exchange of the form $\flip_{v,b}(\K)$, as
\emph{unstarring} a vertex. Note that only vertices with special links
may be unstarred. We may unstar a vertex $v \in \K$ to form
an edge in $\K^\prime$ if and only if  $\link(v) = S^0 \ast L$ for some
subcomplex $L$ and the vertices of the $S^0$ must be a non-edge of
$\K$. The Stanley-Reisner ideal of $\K^\prime$ is gotten from the
ideal of $\K$ by removing all monomials containing $x_v$ and the
monomial $x_ux_v$. In particular if $\K$ is a flag complex then so is
$\K^\prime$. We will say the $\K^\prime$ is gotten from an \emph{edge
unstarring}.

Let $r,s$ be positive numbers such that $n > r \ge 4$ and $r+s = n+3$. For
every such pair we will realize $\A_n$ as a stellar subdivision of
$\A_r \ast \A_s$. This is a generalization of the process yielding
$T_{9} = \A_6$ from $T_7 = \A_5 \ast \A_4$ described in Section
\ref{sec:tri}.  In general there will be several different series of
unstarrings from $\A_n$ to $\A_r \ast \A_s$ producing many different
combinatorial spheres with unobstructed Stanley-Reisner rings.

Let $\A_r$ be the complex of non-crossing diagonals of the polygon
with vertices $(1,2,\dots , r)$ in that cyclic order. Let $\A_s$ be the
same for the polygon with vertices $(1,2,r,r+1, \dots, n-1, n)$ in
that cyclic order. With this indexing the vertices $\delta_{ij}$ of
$\A_r$ and $\A_s$ are disjoint, but may be interpreted as vertices of
$\A_n$. The difference in vertex sets is the set $$D_{n,r} =
\{\delta_{ij}: 3 \le i \le r-1, \, r+1 \le j \le n\} $$ of
$(n-r)(r-3)$ diagonals in $\A_n$. On the other hand $\A_r \ast \A_s$
contains $(n-r)(r-3)$ edges not in  $\A_n$, namely the edges in
$E_{n,r} =\{ \{\delta_{1,i}, \delta_{2,j}\} : 3 \le i \le r-1, \, r+1 \le j
\le n\}$.

Consider the partial order on $D_{n,r}$ given by $\delta_{ij} >
\delta_{kl}$ if $j-i > l-k$. Note that $2 \le j-i \le n-3$ for
$\delta_{ij} \in D_{n,r}$ and that there is a unique maximal element $\delta_{3,n}$
and a unique minimal element $\delta_{r-1,r+1}$. Let $>$ be any total
order on $D_{n,r}$ which extends this partial order.

\begin{theorem} \label{unstar} Let $r,s$ be positive numbers such that
  $n > r \ge 4$
and $r+s = n+3$.  Successively unstarring the vertices in the totally
ordered set $(D_{n,r},>)$ by starting with the maximal element and
then following the order, realizes $\A_n$ as a stellar subdivision of
$\A_r \ast \A_s$. The sequence of intermediate
simplicial complexes yields $(n-r)(r-3)$ triangulated $(n-4)$-spheres
whose Stanley-Reisner ring has trivial $T^2$.
\end{theorem}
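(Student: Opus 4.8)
The plan is to verify that the prescribed sequence of unstarrings is well-defined at each step and terminates in $\A_n$, and then to read off the $T^2$-vanishing from Proposition \ref{stellar}. First I would set up the bookkeeping: enumerate $D_{n,r}$ in the chosen total order $>$ as $\delta^{(1)} > \delta^{(2)} > \dots > \delta^{(N)}$ with $N = (n-r)(r-3)$, and let $\K_0 = \A_n$, with $\K_t$ obtained from $\K_{t-1}$ by unstarring the vertex $\delta^{(t)}$. The key claim is that at stage $t$ the vertex $\delta^{(t)}$ is unstarrable in $\K_{t-1}$, i.e.\ $\link(\delta^{(t)}, \K_{t-1}) = S^0 \ast L$ for some subcomplex $L$, with the two vertices of $S^0$ forming a non-edge of $\K_{t-1}$; and moreover that the resulting new edge is the corresponding element of $E_{n,r}$, so that after all $N$ steps we have replaced exactly the vertices of $D_{n,r}$ by exactly the edges of $E_{n,r}$, yielding $\A_r \ast \A_s$. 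Since $\A_n$ is a flag complex and unstarring preserves flagness (as noted before the theorem), the whole sequence stays within flag complexes, which is the natural setting for identifying the complexes combinatorially.

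The heart of the argument is the inductive identification of the link. In $\A_n$ itself, $\link(\delta_{ij}) \simeq \A_{j-i+1} \ast \A_{n-j+i+1}$ by Lemma \ref{link}, since $\delta_{ij}$ splits the $n$-gon into a $(j-i+1)$-gon and an $(n-j+i+1)$-gon. The point of processing $D_{n,r}$ in decreasing order of ``length'' $j-i$ is that when we come to unstar $\delta_{ij} \in D_{n,r}$, every diagonal of $D_{n,r}$ strictly longer than it has already been removed, while every shorter one is still present; I would show by induction that in $\K_{t-1}$ the link of $\delta^{(t)} = \delta_{ij}$ has been modified from its $\A_n$-link precisely so that it now has the form $S^0 \ast L$, where the $S^0$ consists of the two diagonals $\delta_{1,i}$ and $\delta_{2,j}$ (these bound the quadrangle $Q_{1,2,i,j}$ and are a non-edge together with $\delta_{ij}$ only after the longer diagonals are gone — before that, $\delta_{ij}$ sits in higher-valency configurations). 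Concretely: $\{\delta_{1,i},\delta_{2,j}\}$ is a non-edge of $\A_n$ (they cross), and one checks that in $\K_{t-1}$ the only diagonals of the original $\A_n$-link of $\delta_{ij}$ that have been deleted are exactly those that made $\{\delta_{1,i},\delta_{2,j}\}$ fail to span an $S^0$ factor. Unstarring then splices in the edge $\{\delta_{1,i},\delta_{2,j}\}$, which is the element of $E_{n,r}$ attached to $\delta_{ij}$. I expect this link computation — tracking exactly which faces of each intermediate complex contain a given candidate vertex — to be the main obstacle, and the place where the ordering hypothesis does real work; it is essentially a careful but elementary polygon-diagonal combinatorics argument, possibly cleanest when phrased via the clique-complex/edge-graph description of flag complexes.

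Once the sequence is shown to be a valid chain of edge unstarrings from $\A_n$ down to $\A_r \ast \A_s$, the conclusion is immediate: reversing it exhibits $\A_n$ as a stellar subdivision of $\A_r \ast \A_s$ via $N = (n-r)(r-3)$ starrings, and each intermediate complex $\K_t$ (for $1 \le t \le N-1$, say, or all of them) is itself a stellar subdivision of $\A_r \ast \A_s$. Since $T^2_{A_r} = 0$ and $T^2_{A_s} = 0$ by Theorem \ref{t2a}, Proposition \ref{tensor} (together with $A_{\A_r \ast \A_s} = A_{\A_r} \otimes_k A_{\A_s}$) gives $T^2_{A_{\A_r \ast \A_s}} = 0$; then Proposition \ref{stellar} propagates the vanishing downward through the chain, so $T^2_{A_{\K_t}} = 0$ for every intermediate complex. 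Finally I would note that each $\K_t$ is a combinatorial $(n-4)$-sphere — this follows because starring a face in a combinatorial sphere yields a combinatorial sphere, and $\A_r \ast \A_s$ is a combinatorial $(n-4)$-sphere as a join of the combinatorial spheres $\A_r$ and $\A_s$ — giving the stated $(n-r)(r-3)$ triangulated $(n-4)$-spheres with trivial $T^2$.
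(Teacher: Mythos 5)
Your overall architecture matches the paper's: reduce to the claim that each $\delta_{ij}\in D_{n,r}$ is unstarrable in the current complex with link of the form $\{\{\delta_{1,i}\},\{\delta_{2,j}\}\}\ast L$, argue this by induction on the total order (base case $\delta_{3,n}$), then propagate $T^2=0$ downward from $\A_r\ast\A_s$ via Theorem \ref{t2a}, Proposition \ref{tensor}, and Proposition \ref{stellar}. That is exactly what the paper does.

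One caveat: the heart of the matter — the inductive link identification — is stated but not proved in your plan, and your one-sentence description of it is slightly off. You say one checks that the diagonals \emph{deleted} from $\link(\delta_{ij},\A_n)$ are exactly those that obstruct the $S^0$ factor; but in fact both deletions and the newly \emph{added} edges of $E_{n,r}$ are needed. The paper's argument: of the diagonals $\delta_{k,l}$ ($2\le k\le i$, $j\le l\le n$) crossing $\delta_{1,i}$, the earlier unstarrings have removed all those with $k\ge 3$, while those with $k=2$ have been joined to $\delta_{1,i}$ by an edge of $E_{n,r}$ — except the single remaining $\delta_{2,j}$, which is what makes $\{\delta_{1,i},\delta_{2,j}\}$ the unique non-edge at $\delta_{1,i}$ in the link (and symmetrically at $\delta_{2,j}$). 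Also note the other join factor $\A_{j-i+1}$ of the original link is untouched and can be folded into $L$. These are the details that make the induction close, so in a finished write-up they must appear.
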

\begin{proof} By the above remarks it is enough to prove that at each
step the chosen $\delta_{ij}$ is unstarrable and that unstarring it
adds one of the missing edges in $E_{n,r}$. We do this by
induction. Clearly $\link(\delta_{3,n}, \A_n) = \{\{\delta_{1,3}\},
\{\delta_{2,n}\}\} \ast \A_{n-1}$. We proceed to prove that at each
step we may do the unstarring $\flip_{a,b}$ with $a = \{\delta_{ij}\}$
and $b= \{\delta_{1,i}, \delta_{2,j}\}$.

Let $\K$ be the result of unstarring up to $\delta_{ij}$. We must
prove that $\link(\delta_{ij},\K)$ is of the form $\{\{\delta_{1,i}\},
\{\delta_{2,j}\}\} \ast L$. This is the same as saying that the only
non-edge in $\link(\delta_{ij},\K)$ containing either $\delta_{1,i}$
or $\delta_{2,j}$ is $\{\delta_{1,i},\delta_{2,j}\}$. The original
link of $\delta_{ij}$ in $\A_n$ was $\A_{i+n-j+1} \ast \A_{j-i+1}$
corresponding to the diagonal splitting the $n$-gon in two. The latter
of these has not changed during the previous unstarrings and we may
disregard it. 

In the former the diagonals crossing $\delta_{1,i}$ were
$\delta_{k,l}$ for $2 \le k \le i$ and $j \le l \le n$. During the
unstarring these have been removed except for those with $k=2$, but
these have been put into edges with $\delta_{1,i}$ except for the case
$l=j$. Thus $\{\delta_{1,i},\delta_{2,j}\}$ is the only non-edge
containing $\delta_{1,i}$. The same argument works for $\delta_{2,j}$
and we have proven the result.
\end{proof}

\begin{remark} In fact one can say more, in the notation of the proof,
one can prove that
$$\link(\delta_{ij},\K) \simeq \{\{\delta_{1,i}\},\{\delta_{2,j}\}\}
\ast \A_i \ast \A_{n-j+3} \ast \A_{j-i+1}\, .$$ This allows us to
compute the $f$-vector at each step. We give here only a formula for
the number of facets. To ease the indexing we set $a_n= c_{n-2}$ (the
Catalan number) to be
the number of facets of $\A_n$.

Let $D_{n,r}^{>k}$ be the set of diagonals that have been unstarred
before coming to step number $k$ in the process. One unstarring
decreases the number of facets by half the number of facets in
$\link(\delta_{ij})$. The number of facets of the complex at step $k$
is therefore
$$a_n - \sum_{\delta_{ij} \in D_{n,r}^{>k}} a_ia_{n-j+3} a _{j-i+1}\,
.$$ Applying the formula to different total orders on $D_{n,r}$ shows
that different orders will in general yield different intermediate
triangulated spheres.
\end{remark}

\begin{example} \label{needed} Clearly one could always chose the
order $(i,j) \le (k,l)$ if $j<l$ or $j=l$ and $i \ge k$.
\end{example}

We may iterate the splitting in Theorem \ref{unstar} to get more
unobstructed Stanley-Reisner rings. Note that the final object $\A_{4}
\ast \A_{4} \ast \dots \ast \A_{4}$ is the boundary of the hyper-octahedron, the join
of $S^0$ with itself $n-3$ times.
\begin{corollary}\label{hypero} If $r_1, \dots ,r_m$ are integers with
$n > r_i \ge 4$ and
$$\sum_{i=1}^m r_i = n + 3(m-1)$$
then $\A_n$ is as a stellar subdivision of $\A_{r_1} \ast \A_{r_2}
\ast \dots \ast A_{r_m}$. In particular $\A_n$ and all joins $\A_{r_1}
\ast \A_{r_2} \ast \dots \ast A_{r_m}$ are stellar subdivisions of the
boundary complex of the 
$n-4$ dimensional hyper-octahedron. The subdivisions are done by
edge-starrings and yield intermediate $(n-4)$-spheres whose
Stanley-Reisner ring has trivial $T^2$.
\end{corollary}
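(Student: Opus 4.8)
The plan is to reduce Corollary \ref{hypero} to an iterated application of Theorem \ref{unstar}. The main point is purely arithmetic bookkeeping: I need to check that when I peel off one associahedron factor at a time, the remaining ``target'' dimensions stay in the range where Theorem \ref{unstar} applies, and that the $T^2$-vanishing propagates through each stellar subdivision via Proposition \ref{stellar}.

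First I would induct on $m$. For $m=1$ there is nothing to prove, and for $m=2$ the statement is exactly Theorem \ref{unstar} (together with the observation that $\A_4 \ast \A_4 \ast \dots \ast \A_4$, with $n-3$ factors, is the boundary of the $(n-4)$-dimensional hyper-octahedron, since $\A_4 = S^0$ and joining $S^0$ with itself $n-3$ times gives the boundary of the $(n-3)$-fold ``cross-polytope'', whose dimension is $n-4$). For the inductive step, suppose the result holds for $m-1$ factors. Given $r_1, \dots, r_m$ with $\sum r_i = n + 3(m-1)$, set $r = r_1$ and $s = n + 3(m-1) - (r_2 + \dots + r_m) + \text{(correction)}$; more precisely, I want to split $\A_n$ first as a stellar subdivision of $\A_{r_1} \ast \A_{n'}$ where $n' := n + 3 - r_1$, which requires $n > r_1 \ge 4$ (given) and then recognize that $\A_{n'}$ itself, by the induction hypothesis applied to $r_2, \dots, r_m$, is a stellar subdivision of $\A_{r_2} \ast \dots \ast \A_{r_m}$, because $\sum_{i=2}^m r_i = n + 3(m-1) - r_1 = n' + 3(m-2)$, which is exactly the arithmetic condition needed. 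Here I should double-check the edge case $n' = 4$, which happens precisely when $m=2$ above; in that degenerate situation the ``subdivision of $\A_{n'}$'' is trivial and the induction bottoms out cleanly.

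The next step is to assemble the chain of stellar subdivisions. Since a stellar subdivision of a complex $\mcL$ joined with a fixed complex $\mcM$ is a stellar subdivision of $\mcL \ast \mcM$ (starring a face $f$ of $\mcL$ is the same as starring $f$ viewed inside $\mcL \ast \mcM$, and this is evidently an edge-starring when $f$ is an edge), the induction hypothesis tells me $\A_{r_2} \ast \dots \ast \A_{r_m}$ refines to $\A_{n'}$ by a sequence of edge-starrings, hence $\A_{r_1} \ast \A_{r_2} \ast \dots \ast \A_{r_m}$ refines to $\A_{r_1} \ast \A_{n'}$ by edge-starrings, and then Theorem \ref{unstar} refines $\A_{r_1} \ast \A_{n'}$ to $\A_n$, again by edge-starrings. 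Concatenating these two sequences exhibits $\A_n$ as a stellar subdivision of $\A_{r_1} \ast \dots \ast \A_{r_m}$ via edge-starrings, and every intermediate complex in either segment is an $(n-4)$-sphere (the join of spheres is a sphere, and stellar subdivisions of spheres are spheres).

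Finally, for the $T^2 = 0$ claim at each intermediate stage: each intermediate complex $\K$ in the chain is itself refined by edge-starrings all the way up to $\A_n$, so $\A_n$ is a stellar subdivision of every such $\K$. By Theorem \ref{t2a} we have $T^2_{A_n} = 0$, and Proposition \ref{stellar} then forces $T^2_{A_\K} = 0$ for each intermediate $\K$, including all the joins $\A_{r_1} \ast \dots \ast \A_{r_m}$ themselves and the boundary of the hyper-octahedron. I expect the only genuine friction to be the arithmetic verification that the range condition $n > r_i \ge 4$ survives each peeling step (in particular that $n' = n+3-r_1 \ge 4$, equivalently $r_1 \le n - 1$, which is exactly the hypothesis $n > r_1$) and the correct handling of the base of the induction; everything else is a mechanical concatenation of results already established.
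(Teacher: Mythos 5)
Your proof is correct and follows essentially the approach the paper has in mind: the paper states Corollary \ref{hypero} without an explicit proof, noting only that one should ``iterate the splitting in Theorem \ref{unstar},'' and your argument simply makes that iteration precise (the induction on $m$, the arithmetic check that $n' = n + 3 - r_1$ keeps the remaining $r_i$ in range, and the propagation of $T^2 = 0$ along the chain via Proposition \ref{stellar} and Theorem \ref{t2a}). Your verification that $n' > r_i$ holds for $i \geq 2$ when $m \geq 3$, and that $n' = 4$ can only occur when $m \leq 2$ so the induction bottoms out correctly, is a genuinely useful piece of bookkeeping that the paper glosses over.
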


\begin{example} \label{dim2} In dimension 2, i.e.\ $n=6$ the possible
 splittings are only $(4,5)$ corresponding to $T_7$ (the process in
 Theorem \ref{unstar} also yields $T_8$) and $(4,4,4)$ corresponding
 to the octahedron $T_6$. For $n=7$ the possibilities are $(4,6),
 (5,5), (4,4,5), (4,4,4,4)$.
\end{example}

In dimension 2 we saw that the unobstructed $T_{10}$ was gotten from
$T_9 = \A_6$ by an edge starring. We would like to generalize also
this construction to higher dimensions to derive unobstructed
triangulated spheres by starring vertices into an edge of $\A_n$. If
we start with a flag complex $\K$ and star a vertex $v$ in an edge
$\{u,w\}$ to get $\K^\prime$, then the Stanley-Reisner ideal of
$\K^\prime$ is gotten from the ideal of $\K$ by adding $x_ux_w$ and
all monomials $x_{v^\prime}x_v$ where $v^\prime \notin \link(\{u,w\},
\K)$. Therefore $\K^\prime$ is also a flag complex.

In dimension 3 or higher experimentation shows that there are
\emph{very many} series of edge starrings starting in $\A_n$ leading
to unobstructed Stanley-Reisner rings and we have not been able to
find a suitable presentation of them.

We have found one general series, but there are many others. Consider
the series of starrings where we successively star vertices
$\varepsilon_k$, $4 \le k \le n-2$, into the edges $\{\delta_{1,3},
\delta_{k,n}\}$ of $\A_n$ and let $\mathcal{C}_n$ be the end result.

\begin{theorem} The module $T^2_{A_{\mathcal{C}_n}} = 0$ for all $n$.
Successively starring vertices into the edges $\{\delta_{1,3},
\delta_{k,n}\}$ of $\A_n$ yields a sequence of $n-5$ triangulated
$(n-4)$-spheres whose Stanley-Reisner ring has trivial $T^2$.
\end{theorem}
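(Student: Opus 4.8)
The plan is to combine the cotangent‑cohomology machinery for flag complexes with an explicit description of the links of $\mathcal{C}_n$. First note that $\mathcal{C}_n$ is a flag complex (edge starrings of the flag complex $\A_n$ are flag, as observed above) and a combinatorial $(n-4)$‑sphere, since stellar subdivisions do not change the $PL$‑homeomorphism type. Links of faces in a flag complex are again flag, so Proposition~\ref{aempty} and Proposition~\ref{t2flag} apply at every face; hence $T^2_{A_{\mathcal{C}_n}}=0$ follows once we check that for every face $a\in\mathcal{C}_n$ and every non‑edge $b$ of $\link(a,\mathcal{C}_n)$ the space $|L_b(\link(a,\mathcal{C}_n))|$ is contractible. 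The contributions with $b$ a single vertex vanish automatically: $\link(a,\mathcal{C}_n)$ is a combinatorial sphere, the closed star of a vertex in a combinatorial sphere is a $PL$‑ball, and the complement of a $PL$‑ball in a $PL$‑sphere of the same dimension is an open ball, so the relevant $H^1$ is zero.

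Second, I would compute the links. Because all the edges $\{\delta_{1,3},\delta_{k,n}\}$ into which we star contain the common vertex $\delta_{1,3}$, the modifications are concentrated in $\overline{\st}(\delta_{1,3})$, and each starring acts on most links merely by renaming. Concretely: by commutation of stellar subdivision with taking links, $\sta(\{\delta_{1,3},\delta_{k,n}\},\cdot)$ induces on $\link(\delta_{1,3},\cdot)$ the operation $\sta(\{\delta_{k,n}\},\cdot)$, and $\delta_{k,n}$ remains a vertex there, so this simply relabels $\delta_{k,n}$ as $\varepsilon_k$; iterating gives $\link(\delta_{1,3},\mathcal{C}_n)\cong\A_{n-1}$. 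Similarly, $\link(\varepsilon_j,\mathcal{C}_n)$ is the suspension $S^0\ast(\,\cdot\,)$ of an iterated stellar image of $\link(\{\delta_{1,3},\delta_{j,n}\},\A_n)$, which by Lemma~\ref{link} is a join of associahedra; and links of faces disjoint from $\overline{\st}(\delta_{1,3},\mathcal{C}_n)$ are unchanged from $\A_n$, again joins of associahedra by Lemma~\ref{link}. Pushing the link‑star formulas through the remaining faces (those meeting but not containing the modified region), I expect the conclusion: \emph{for every nonempty face $a$ of $\mathcal{C}_n$, $\link(a,\mathcal{C}_n)$ is a join $\A_{m_1}\ast\cdots\ast\A_{m_r}$ of associahedra with every $m_i<n$.}

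Granting this, the nonempty‑$a$ case is immediate: in a join every non‑edge $b$ lies in a single factor $\A_{m_i}$, and $L_b(\A_{m_1}\ast\cdots\ast\A_{m_r})$ is the join of $L_b(\A_{m_i})$ with the remaining factors $\A_{m_j}$, $j\neq i$; by Lemma~\ref{boundb} the first factor is a ball, and the join of a ball with any complex is a ball, hence contractible. It remains to treat $a=\emptyset$, i.e.\ the sphere $\mathcal{C}_n$ itself. Its non‑edges split into those avoiding all $\varepsilon_k$ and not meeting $\overline{\st}(\delta_{1,3})$ — these are non‑edges of $\A_n$, for which $|L_b|$ is a ball by Lemma~\ref{boundb} — and a short list of non‑edges built from the new vertices (such as $\{\varepsilon_j,\varepsilon_k\}$ and $\{\varepsilon_k,\delta_{pq}\}$) together with the surviving crossing‑diagonal non‑edges through $\delta_{1,3}$; for each of these $L_b=\link(v,\mathcal{C}_n)\cap\link(w,\mathcal{C}_n)$ can be read off from the link descriptions above and checked to be contractible. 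The base cases $n=5$ (where $\mathcal{C}_5=\A_5$, so Theorem~\ref{t2a} applies) and $n=6$ (where $\mathcal{C}_6=T_{10}$) are direct.

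The main obstacle is this second step: the starrings are iterated and a later starring $\sta(\{\delta_{1,3},\delta_{k',n}\},\cdot)$ does modify the closed star of every earlier $\varepsilon_k$, since at that stage $\varepsilon_k\in\link(\{\delta_{1,3},\delta_{k',n}\})$, so the new vertices cannot be analysed in isolation. I would handle this by giving one uniform combinatorial model for the partially starred complexes (the non‑crossing‑diagonal complex of the $n$‑gon with a nested chain of edge subdivisions grafted at the corner $\{1,3\}$), from which all the link computations of step two and the $a=\emptyset$ enumeration follow uniformly. The one genuinely combinatorial input needed is that the relevant stellar images of joins of associahedra are again joins of associahedra — the simplest nontrivial instance being that subdividing the appropriate edge of the octahedron $\A_4\ast\A_4\ast\A_4$ produces the pentagonal bipyramid $\A_4\ast\A_5$.
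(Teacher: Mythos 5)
Your overall plan — flag-complex machinery via Propositions~\ref{aempty} and~\ref{t2flag}, compute all links, check contractibility of $|L_b|$ for non-edges $b$ — is indeed the route the paper sketches (``a careful case by case check of what links of vertices and the $L_b$ for $\mathcal{C}_n$ look like''). But the structural claim on which you build, namely that $\link(a,\mathcal{C}_n)$ is always a join $\A_{m_1}\ast\cdots\ast\A_{m_r}$ of dual associahedra for nonempty $a$, is false. Take $n=7$ and $a=\{\delta_{57}\}$. One has $\link(\delta_{57},\A_7)\cong\A_6$ (on the hexagon $\{1,2,3,4,5,7\}$), and since $\{\delta_{13},\delta_{47},\delta_{57}\}\in\A_7$, the first starring $\sta(\{\delta_{13},\delta_{47}\},-)$ passes to this link as $\sta(\{\delta_{13},\delta_{47}\},\A_6)$. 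The link of $\{\delta_{13},\delta_{47}\}$ in that $\A_6$ is the $S^0=\{\delta_{14},\delta_{37}\}$ of two long (valency-4) diagonals, so the result of the starring is precisely $T_{10}$; the second starring merely renames $\delta_{13}\mapsto\varepsilon_5$ inside this link. Thus $\link(\delta_{57},\mathcal{C}_7)\cong T_{10}$, which is not a join of dual associahedra — a $2$-sphere that is a nontrivial join is a bipyramid $S^0\ast C_k$, whose two apexes would have valency $k=8$ here, but all vertices of $T_{10}$ have valency $4$ or $5$. In fact the link-star commutation gives $\link(\delta_{kn},\mathcal{C}_n)\cong\A_{n-k+1}\ast\mathcal{C}_{k+1}$ in general; the starrings propagate down these links because the starred edges all share the $n$-gon vertex $n$, not just $\delta_{13}$, which is a second source of interaction beyond the one you noted with the $\varepsilon_k$'s.

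Because of this, the ``Granting this\ldots'' portion of your argument — the claim that $b$ lies in a single $\A_{m_i}$ factor and $L_b$ is a ball by Lemma~\ref{boundb}, and the subsequent $a=\emptyset$ enumeration — does not apply as written. The theorem can be salvaged along similar lines, but the correct structural statement is recursive: links of vertices of $\mathcal{C}_n$ are joins of $\A_m$'s and copies of $\mathcal{C}_m$ with $m<n$, so one should run an induction on $n$ using Proposition~\ref{tensor}, with base cases $\mathcal{C}_5=\A_5$, $\mathcal{C}_6=T_{10}$ already settled; the $L_b$ computation for $a=\emptyset$ must likewise account for the $\mathcal{C}$-factors rather than reducing to Lemma~\ref{boundb} alone. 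You have correctly located the difficulty (iterated starrings interact), but the ``uniform combinatorial model'' you gesture at must produce this $\mathcal{C}$-recursion, not pure associahedral joins.
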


We omit the long and technical proof which consists of a careful case
by case check of what links of vertices and the $L_b$ for
$\mathcal{C}_n$ look like. Instead we illustrate what can
happen in higher dimensions by presenting a complete list of 74
combinatorial 3-spheres which appear as successive edges starrings of
$\A_7$ and have   Stanley-Reisner ring with trivial $T^2$.

The list was constructed as follows. A necessary condition for a
3-sphere $\K$ to be a flag complex and have $T^2(\K) = 0$ is that the
links of all edges must be 4-gons or 5-gons, otherwise $T^2_{a-b}(\K)
\ne 0$ for an edge $a$ with valency $\ge 6$ and suitable $b$. Thus,
from a $\K$ with $T^2 = 0$, to get $\K^\prime$ with $T^2(\K^\prime) =
0$ by edge starring the edge $a$ we star in must satisfy
\begin{equation}\label{cond}\text{for all edges $e \in \link(a,\K)$,
$\link(e,\K)$ is a 4-gon.}
\end{equation} This is because for such an $e$, $\link(e,\K^\prime) =
\sta(a,\link(e,\K))$. Call edges satisfying \eqref{cond} 
\emph{legal} edges.

Most of the computations are done in Maple. Assume after successive
edge starrings we have found a flag complex 3-sphere $\K$ with
$T^2=0$. We may compute the automorphism group as the automorphisms of
the edge graph, and this we do with {\tt polymake}
(\cite{gj:po}). Finding the legal edges may be done in Maple and we
choose one for each orbit of the automorphism group. We compute again
in Maple the result of starring in one of these edges and check with {\tt
polymake} if this is isomorphic to a complex we all ready have.

Doing this in a systematic manner we get the list of 74 3-spheres in 
Table \ref{list1} below. The ``Comes from'' column explains which
edges in which of the previous complexes one may star to get this
sphere. Vertices are the original $\delta_{ij}$ of  $\A_7$ and new vertices
$v_1,\dots ,v_8$ where the index denotes at which step they appear in
the starring process.

To ensure that $T^2=0$ we still have to check that
$T^2_{\emptyset - b} = 0$ for all non-edges $b$. It is enough to do
this for the ``final'' ones, i.e.\ those with no legal edges by
Proposition \ref{stellar}. These
are written with boldface in the tables. We use the identity
$T^2_{\emptyset - b} \simeq H_1(L_b)$ from \cite[Proposition
4.8]{ac:def} to do this in Maple. In all cases $T^2_{\emptyset - b}$
did vanish. Based on this and the dimension 2 case we make the
following conjecture.

\begin{conjecture} If $\K$ is a combinatorial sphere and a flag
complex with $\link(f,\K)$ a $4$-gon or $5$-gon for all codimension 2
faces $f$, then $T^2_{A_{\K}} = 0$.
\end{conjecture}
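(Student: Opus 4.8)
The plan is to run exactly the machinery used above for the complexes $\mathcal{C}_n$ and for the $74$ spheres built from $\A_7$, and to isolate the single ingredient that is not yet under uniform control. Writing $T^2_{A_{\K}}=\bigoplus_{a,b}T^2_{a-b}(\K)$, the first step is to reduce to $a=\emptyset$: by Proposition \ref{aempty} one has $T^2_{a-b}(\K)\simeq T^2_{\emptyset-b}(\link(a,\K))$, and $\link(a,\K)$ is again a flag complex and a combinatorial sphere; moreover the hypothesis is inherited, since for a codimension-$2$ face $g$ of $\link(a,\K)$ the set $g\cup a$ is a codimension-$2$ face of $\K$ with $\link(g\cup a,\K)=\link(g,\link(a,\K))$, hence a $4$-gon or $5$-gon. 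So it is enough to prove $T^2_{\emptyset-b}(\K)=0$ for every flag combinatorial sphere $\K$ all of whose codimension-$2$ links are $4$- or $5$-gons, and every $b$.

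By Proposition \ref{t2flag} only two subcases survive. If $b=\{v\}$, then $T^2_{\emptyset-\{v\}}(\K)\simeq H^1(|\K|\setminus|\overline{\st}(v)|,k)$; since $\K$ is a combinatorial sphere, $|\overline{\st}(v)|$ is a PL ball and $|\K|\setminus|\overline{\st}(v)|$ is the interior of the antistar ball, hence contractible, so this group vanishes. If $b=\{v,w\}$ is a non-edge, then by the last assertion of Proposition \ref{t2flag} it suffices to show that $|L_b|=|\link(v,\K)\cap\link(w,\K)|$ is contractible; and since $\K$ is a combinatorial manifold, by \cite[Proposition 4.8]{ac:def} the weaker statement $H_1(L_b)=0$ already suffices.

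The heart of the matter — and the step I expect to be the real obstacle, which is presumably why the statement is only conjectured — is controlling $L_b$ from the $4$-or-$5$-gon hypothesis. The natural attack is induction on $d=\dim\K$, with base case $d=2$ furnished by the classification in \cite[Corollary 2.5]{iso:tor} (in that dimension the hypothesis simply says every vertex has valency $4$ or $5$). For the inductive step one studies the local structure of $L_b$: for a vertex $u\in L_b$ the pair $\{v,w\}$ is a non-edge of the flag sphere $\link(u,\K)$ and $\link(u,L_b)\cong L_{\{v,w\}}(\link(u,\K))$, to which the inductive hypothesis applies; and for a face $g\in L_b$ with $g\cup v$ of codimension $2$ in $\K$, the $4$-or-$5$-gon condition forces $\link(g,L_b)$ to be a subcomplex of the intersection of the two cycles $\link(g\cup v,\K)$ and $\link(g\cup w,\K)$, leaving only a short list of possibilities. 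The difficulty is that contractibility (or vanishing of $H_1$) is not a local property, so these link-by-link statements must be patched together — for instance by a Mayer--Vietoris or nerve argument over the closed stars of the codimension-$2$ faces of $\K$ meeting $\{v,w\}$, or by exhibiting a single vertex of $L_b$ joined to all of $L_b$. Making this patching work uniformly for every flag sphere satisfying the hypothesis is exactly what is missing; for the specific families $\mathcal{C}_n$, and for the $74$ spheres over $\A_7$, it is instead carried out by an explicit and lengthy bookkeeping of every vertex link and every $L_b$, which is the content of the proof omitted above and of the \texttt{Maple}/\texttt{polymake} verification.
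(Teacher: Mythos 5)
You have not closed the gap, and indeed you say so yourself — but it is worth being explicit that the paper does not close it either: this statement appears only as a conjecture, and no proof (partial or complete) is given. So there is nothing in the paper to compare your argument to beyond the machinery you have already invoked.

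That said, the reductions you carry out are correct and are exactly the ones the conjecture rests on. Proposition~\ref{aempty} together with the observation that links of faces in flag spheres are flag spheres inheriting the $4$-or-$5$-gon hypothesis does reduce everything to $a=\emptyset$; Proposition~\ref{t2flag} reduces to $b$ a vertex or a non-edge; the vertex case vanishes because $|\K|\setminus|\overline{\st}(v)|$ is an open PL ball; and the non-edge case reduces to showing $\widetilde H^0(|\K|\setminus|\partial b\ast L_b|)=0$ (equivalently, by \cite[Proposition~4.8]{ac:def}, $H_1(L_b)=0$). The one missing ingredient is precisely the one you name: some global argument forcing $|L_b|$ to be connected in the relevant sense from the purely local codimension-$2$ hypothesis. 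Your inductive setup for $\link(u,L_b)$ is the natural first move, but as you observe, connectivity is not a local property and no patching lemma is supplied — this is the reason the paper can only verify the statement case by case for $\mathcal{C}_n$ and for the $74$ spheres built from $\A_7$, and leaves the general statement open. Your proposal is therefore a correct and honest account of the state of the problem rather than a proof.
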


In the table, we also include $-\chi(\Theta)$, the number of virtual
moduli which a potential smoothing of the Fano fourfold
$\PP(\K*\Delta_0)$ would have, where $\K$ is a sphere in the
table. This is computed by using \cite[Theorem 
12]{ac:cot} coupled with \cite[Proposition 2.1]{ci:hil}. The
Hilbert polynomial of $\PP(\K*\Delta_0)$ may be computed from the
table. It is a function of the
$f$-vector of $\K*\Delta_0$ which again by the Dehn-Sommerville
equations may be computed from the number of vertices and facets in
$\K$. One computes that the Hilbert polynomial of $\PP(\K*\Delta_0)$
is $$\frac{1}{24}f_3t^4 +\frac{1}{12}f_3t^3
+\left(\frac{1}{2}f_0-\frac{1}{24}f_3\right)t^2+\left(\frac{1}{2}f_0-\frac{1}{12}f_3\right)t+ 1
$$
where $f_0$ is the number of vertices and $f_3$ is the number of
facets in the sphere $\K$.   

\begin{remark} In light of Corollary \ref{hypero} the above process
should be implemented starting with the first unobstructed flag
complex, namely the boundary complex of the hyper-octahedron. This
will certainly lead to many more 3-dimensional combinatorial spheres
having Stanley-Reisner ring with trivial $T^2$.
\end{remark}

\newgeometry{lmargin=.3in,tmargin=1in,bmargin=1in,rmargin=.3in}

\begin{table}
\centering \subfloat{\footnotesize \begin{tabular}[t]{|c|l|c|l|c|}    
\hline Vertices &Name & Facets & Comes from &$-\chi(\Theta)$\\
\hline \hline
\multirow{2}{*}{15} & $K_1$ &  47 & $\{\delta_{13}, \delta_{46}\} \in
\A_7$ &34\\
\cline{2-5}
& $K_2$&  46 & $\{\delta_{13}, \delta_{47}\} \in \A_7$ &44\\
\hline 
\multirow{9}{*}{16} & $K_3$ &  51 & $\{\delta_{13}, \delta_{47}\} \in K_1$&38 \\
\cline{2-5}
& \multirow{2}{*}{$K_4$} & \multirow{2}{*}{51} & $\{\delta_{14},
\delta_{57}\} \in K_1$ &\multirow{2}{*}{38}\\
&&& $\{\delta_{24}, \delta_{57}\} \in K_2 $& \\
\cline{2-5}
& $K_5$ &  52 & $\{\delta_{16}, \delta_{24}\} \in K_1$&28 \\
\cline{2-5}
& \multirow{2}{*}{$K_6$}  & \multirow{2}{*}{51} & $\{\delta_{16}, \delta_{25}\} \in
K_1 $&\multirow{2}{*}{36} \\
  & & & $\{\delta_{16}, \delta_{35}\} \in K_2 $& \\
\cline{2-5}
& $K_7$  &  52 & $\{\delta_{13}, \delta_{57}\} \in K_2$&38 \\
\cline{2-5}
& $K_8$  &  51 & $\{\delta_{14}, \delta_{57}\} \in K_2$ &36\\
\cline{2-5}
& $K_9$  &  50 & $\{\delta_{16}, \delta_{25}\} \in K_2$&46 \\
\hline
\multirow{28}{*}{17} &  \multirow{3}{*}{$K_{10}$} &
\multirow{3}{*}{55} & $\{v_1, \delta_{47}\} \in K_3$ & \multirow{3}{*}{40} \\
& & & $\{v_1, \delta_{46}\} \in K_7$ &\\
& & & $\{\delta_{13}, \delta_{46}\} \in K_9$ & \\
\cline{2-5}
& $K_{11}$  &  56 & $\{\delta_{13}, \delta_{57}\} \in K_3$&32 \\
\cline{2-5}
&\multirow{2}{*}{$K_{12}$}  &  \multirow{2}{*}{56}  & $\{\delta_{14},
\delta_{57}\} \in K_3$ & \multirow{2}{*}{32}\\
& & & $\{\delta_{13}, \delta_{47}\} \in K_4$& \\
\cline{2-5}
&\multirow{2}{*}{$K_{13}$}    &   \multirow{2}{*}{56} &
$\{\delta_{16}, \delta_{24}\} \in K_3$& \multirow{2}{*}{32} \\
& & & $\{\delta_{13}, \delta_{47}\} \in K_5$ &\\
\cline{2-5}
& \multirow{2}{*}{$K_{14}$}  &   \multirow{2}{*}{55} & $\{\delta_{16},
\delta_{25}\} \in K_3$ &\multirow{2}{*}{40}\\
& & & $\{\delta_{13}, \delta_{47}\} \in K_6$& \\
\cline{2-5}
&  \multirow{2}{*}{$K_{15}$}  &   \multirow{2}{*}{56} &
$\{\delta_{16}, \delta_{35}\} \in K_3$&\multirow{2}{*}{32} \\
& & & $\{\delta_{16}, \delta_{25}\} \in K_5$ &\\
\cline{2-5}
&\multirow{2}{*}{$K_{16}$}  &   \multirow{2}{*}{55}   &
$\{\delta_{27}, \delta_{36}\} \in K_3$&\multirow{2}{*}{42} \\
& & & $\{\delta_{37}, \delta_{46}\} \in K_4$ &\\
\cline{2-5}
& $K_{17}$  &  55 & $\{\delta_{37}, \delta_{46}\} \in K_3$ &44\\
\cline{2-5}
& \multirow{2}{*}{$K_{18}$}  &  \multirow{2}{*}{56}  & $\{\delta_{16},
\delta_{24}\} \in K_4$ &\multirow{2}{*}{34}\\
& & & $\{\delta_{14}, \delta_{57}\} \in K_5$ &\\
\cline{2-5}
&  \multirow{3}{*}{$K_{19}$}  &  \multirow{3}{*}{56}  &
$\{\delta_{24}, \delta_{57}\} \in K_4$ &\multirow{3}{*}{32}\\
& & & $\{\delta_{14}, \delta_{57}\} \in K_7$ &\\
& & & $\{\delta_{13}, \delta_{46}\} \in K_8$ &\\
\cline{2-5}
&  \multirow{2}{*}{$K_{20}$}  &   \multirow{2}{*}{55}  &
$\{\delta_{27}, \delta_{36}\} \in K_4$ &\multirow{2}{*}{42}\\
& & & $\{\delta_{24}, \delta_{57}\} \in K_9$ &\\
\cline{2-5}
& $K_{21}$  &  57 & $\{\delta_{16}, \delta_{46}\} \in K_5$&24 \\
\cline{2-5}
&  \multirow{2}{*}{$K_{22}$}  &   \multirow{2}{*}{56} &
$\{\delta_{16}, \delta_{24}\} \in K_6$&\multirow{2}{*}{32} \\
& & & $\{\delta_{16}, \delta_{35}\} \in K_7$ &\\
\cline{2-5}
& $K_{23}$  &  56 & $\{\delta_{13}, \delta_{57}\} \in K_8$ &32\\
\cline{2-5}
&  \multirow{2}{*}{$K_{24}$}  &  \multirow{2}{*}{55} & $\{\delta_{26},
\delta_{35}\} \in K_8$ &\multirow{2}{*}{38}\\
& & & $\{\delta_{26}, \delta_{35}\} \in K_9$& \\
\hline
\end{tabular} }
\quad
\subfloat{\footnotesize
\begin{tabular}[t]{|c|l|c|l|c|}    
\hline Vertices &Name & Facets & Comes from &$-\chi(\Theta)$\\
\hline \hline
\multirow{46}{*}{18} &   \multirow{5}{*}{$K_{25}$} &
\multirow{5}{*}{60} & $\{\delta_{14}, \delta_{57}\} \in K_{10}$ &\multirow{5}{*}{36}\\
& & & $\{\delta_{27}, \delta_{36}\} \in K_{12}$ &\\
& & & $\{\delta_{14}, \delta_{57}\} \in K_{16}$ &\\
& & & $\{\delta_{27}, \delta_{36}\} \in K_{19}$ &\\
& & & $\{\delta_{13}, \delta_{47}\} \in K_{20}$ &\\
\cline{2-5}
&   \multirow{2}{*}{$K_{26}$}  &    \multirow{2}{*}{59} &
$\{\delta_{16}, \delta_{25}\} \in K_{10}$&\multirow{2}{*}{42} \\
& & & $\{v_1, \delta_{47}\} \in K_{14}$ &\\
\cline{2-5}
&  \multirow{3}{*}{$K_{27}$}  &   \multirow{3}{*}{60} &
$\{\delta_{16}, \delta_{35}\} \in K_{10}$ &\multirow{3}{*}{34}\\
& & & $\{v_1, \delta_{47}\} \in K_{15}$ &\\
& & & $\{v_2, \delta_{24}\} \in K_{22}$ &\\
\cline{2-5}
&  \multirow{2}{*}{$K_{28}$}  &   \multirow{2}{*}{60} &
$\{\delta_{37}, \delta_{46}\} \in K_{10}$ &\multirow{2}{*}{36}\\
& & & $\{v_1, \delta_{37}\} \in K_{17}$ &\\
\cline{2-5}
& $K_{29}$  &  61 & $\{\delta_{14}, \delta_{57}\} \in K_{11}$ &28\\
\cline{2-5}
&  \multirow{2}{*}{$K_{30}$}  &   \multirow{2}{*}{61} &
$\{\delta_{16}, \delta_{35}\} \in K_{11}$&\multirow{2}{*}{28} \\
& & & $\{\delta_{13}, \delta_{57}\} \in K_{15}$ &\\
\cline{2-5}
&  \multirow{4}{*}{$K_{31}$}  & \multirow{4}{*}{60}  & $\{\delta_{37},
\delta_{46}\} \in K_{11}$&\multirow{4}{*}{38} \\
& & & $\{\delta_{37}, \delta_{46}\} \in K_{12}$& \\
& & & $\{\delta_{37}, \delta_{46}\} \in K_{16}$ &\\
& & & $\{\delta_{13}, \delta_{57}\} \in K_{17}$ &\\
\cline{2-5}
&  \multirow{3}{*}{$K_{32}$}  &   \multirow{3}{*}{61} &
$\{\delta_{13}, \delta_{57}\} \in K_{12}$&\multirow{3}{*}{28} \\
& & & $\{\delta_{14}, \delta_{57}\} \in K_{13}$ &\\
& & & $\{\delta_{13}, \delta_{47}\} \in K_{18}$ &\\
\cline{2-5}
&  \multirow{2}{*}{$K_{33}$}  &   \multirow{2}{*}{61} &
$\{\delta_{24}, \delta_{57}\} \in K_{12}$&\multirow{2}{*}{28} \\
& & & $\{\delta_{13}, \delta_{47}\} \in K_{19}$ &\\
\cline{2-5}
& \multirow{2}{*}{$K_{34}$}  &  \multirow{2}{*}{60} & $\{\delta_{16},
\delta_{25}\} \in K_{13}$ &\multirow{2}{*}{36}\\
& & & $\{\delta_{26}, \delta_{35}\} \in K_{15}$& \\
\cline{2-5}
& \multirow{3}{*}{$K_{35}$}  &  \multirow{3}{*}{60} & $\{\delta_{37},
\delta_{46}\} \in K_{13}$&\multirow{3}{*}{38} \\
& & & $\{\delta_{37}, \delta_{46}\} \in K_{15}$& \\
& & & $\{\delta_{16}, \delta_{24}\} \in K_{17}$ &\\
\cline{2-5}
&  \multirow{3}{*}{$K_{36}$}  &   \multirow{3}{*}{60} &
$\{\delta_{16}, \delta_{24}\} \in K_{14}$&\multirow{3}{*}{36} \\
& & & $\{\delta_{13}, \delta_{47}\} \in K_{22}$ &\\
& & & $\{v_1, \delta_{57}\} \in K_{23}$ &\\
\cline{2-5}
&  \multirow{2}{*}{$K_{37}$}  &   \multirow{2}{*}{60} &
$\{\delta_{16}, \delta_{35}\} \in K_{14}$&\multirow{2}{*}{36} \\
& & & $\{\delta_{37}, \delta_{46}\} \in K_{22}$ &\\
\cline{2-5}
& \multirow{2}{*}{$K_{38}$}  &  \multirow{2}{*}{59} & $\{\delta_{37},
\delta_{46}\} \in K_{14}$&\multirow{2}{*}{46} \\
& & & $\{\delta_{16}, \delta_{25}\} \in K_{17}$& \\
\cline{2-5}
& $K_{39}$  &  60 & $\{\delta_{16}, \delta_{25}\} \in K_{15}$&36 \\
\cline{2-5}
&  \multirow{3}{*}{$K_{40}$}  &   \multirow{3}{*}{60} &
$\{\delta_{27}, \delta_{36}\} \in K_{15}$ &\multirow{3}{*}{38}\\
& & & $\{\delta_{16}, \delta_{35}\} \in K_{16}$ &\\
& & & $\{\delta_{16}, \delta_{25}\} \in K_{18}$ &\\
\cline{2-5}
&  \multirow{2}{*}{$\boldsymbol{K_{41}}$}  &   \multirow{2}{*}{61} &
$\{\delta_{16}, \delta_{46}\} \in K_{18}$&\multirow{2}{*}{30} \\
& & & $\{\delta_{14}, \delta_{57}\} \in K_{21}$ &\\
\cline{2-5}
& $K_{42}$  &  61 & $\{\delta_{15}, \delta_{24}\} \in K_{22}$&26 \\
\cline{2-5}
& {$\boldsymbol{K_{43}}$}   &  61 & $\{\delta_{16}, \delta_{46}\} \in
K_{22}$ &28\\
\cline{2-5}
&  \multirow{2}{*}{$K_{44}$}  &   \multirow{2}{*}{60} &
$\{\delta_{26}, \delta_{35}\} \in K_{23}$&\multirow{2}{*}{34} \\
& & & $\{\delta_{13}, \delta_{57}\} \in K_{24}$ &\\
\hline
\end{tabular} }
\caption{3-spheres with $T^2(\K) = 0$ generated by edge starrings.}
\label{list1}
\end{table}

\begin{table}
\ContinuedFloat
\centering
\subfloat{\footnotesize \begin{tabular}[t]{|c|l|c|l|c|}    \hline Vertices &
Name & Facets & Comes from & $-\chi(\Theta)$\\
\hline \hline
\multirow{47}{*}{19} &   \multirow{2}{*}{{$\boldsymbol{K_{45}}$}} &
\multirow{2}{*}{65} &  $\{\delta_{24},\delta_{57}\} \in K_{25}$ &\multirow{2}{*}{32} \\
& & & $\{\delta_{27}, \delta_{36}\} \in K_{33}$ &\\
\cline{2-5}
&   \multirow{3}{*}{$\boldsymbol{K_{46}}$}  &    \multirow{3}{*}{65} &
$\{\delta_{37}, \delta_{46}\} \in  K_{25}$& \multirow{3}{*}{32}\\
& & & $\{\delta_{14}, \delta_{57}\} \in K_{28}$ &\\
& & & $\{\delta_{27}, \delta_{46}\} \in K_{31}$ &\\
\cline{2-5}
&  \multirow{4}{*}{$K_{47}$}  &   \multirow{4}{*}{64} &
$\{\delta_{16}, \delta_{35}\} \in K_{26}$&\multirow{4}{*}{38} \\
& & & $\{\delta_{16}, \delta_{25}\} \in K_{28}$ &\\
& & & $\{v_1, \delta_{47}\} \in K_{37}$ &\\
& & & $\{v_1, \delta_{37}\} \in K_{38}$ &\\
\cline{2-5}
&  \multirow{3}{*}{$K_{48}$}  &   \multirow{3}{*}{64} &
$\{\delta_{16}, \delta_{25}\} \in K_{27}$ &\multirow{3}{*}{38}\\
& & & $\{v_3, \delta_{35}\} \in K_{37}$ &\\
& & & $\{v_1, \delta_{47}\} \in K_{39}$ &\\
\cline{2-5}
& \multirow{4}{*}{$K_{49}$}  &  \multirow{4}{*}{64} & $\{\delta_{26},
\delta_{35}\} \in K_{27}$ &\multirow{4}{*}{38}\\
& & & $\{v_3, \delta_{25}\} \in K_{34}$ &\\
& & & $\{v_3, \delta_{24}\} \in K_{36}$ &\\
& & & $\{v_1, \delta_{57}\} \in K_{44}$ &\\
\cline{2-5}
& \multirow{3}{*}{$K_{50}$}  &  \multirow{3}{*}{65} & $\{\delta_{37},
\delta_{46}\} \in K_{27}$ &\multirow{3}{*}{30}\\
& & & $\{\delta_{16}, \delta_{35}\} \in K_{28}$ &\\
& & & $\{v_1, \delta_{37}\} \in K_{35}$ &\\
\cline{2-5}
&  \multirow{4}{*}{$K_{51}$}  &   \multirow{4}{*}{65} & $\{v_3,
\delta_{14}\} \in K_{29}$ &\multirow{4}{*}{32}\\
& & & $\{v_3, \delta_{13}\} \in K_{32}$ &\\
& & & $\{\delta_{14}, \delta_{57}\} \in K_{34}$& \\
& & & $\{\delta_{26}, \delta_{35}\} \in K_{40}$& \\
\cline{2-5}
&  \multirow{2}{*}{$K_{52}$}  &   \multirow{2}{*}{66} & $\{
\delta_{16}, \delta_{35}\} \in K_{29}$ &\multirow{2}{*}{24}\\
& & & $\{\delta_{14}, \delta_{57}\} \in K_{30}$& \\
\cline{2-5}
 &    {$\boldsymbol{K_{53}}$}   &  66 & $\{\delta_{24},
\delta_{57}\} \in K_{29}$&24 \\
\cline{2-5}
&  \multirow{6}{*}{$K_{54}$}  &   \multirow{6}{*}{65} & $\{
\delta_{37}, \delta_{46}\} \in K_{29}$ &\multirow{6}{*}{34}\\
& & & $\{\delta_{37}, \delta_{46}\} \in K_{30}$ &\\
& & & $\{\delta_{14}, \delta_{57}\} \in K_{31}$ &\\
& & & $\{\delta_{37}, \delta_{46}\} \in K_{32}$ &\\
& & & $\{\delta_{14}, \delta_{57}\} \in K_{35}$ &\\
& & & $\{\delta_{37}, \delta_{46}\} \in K_{40}$ &\\
\cline{2-5}
&  \multirow{2}{*}{$K_{55}$}  &   \multirow{2}{*}{65} & $\{
\delta_{16}, \delta_{25}\} \in K_{30}$&\multirow{2}{*}{32} \\
& & & $\{\delta_{13}, \delta_{57}\} \in K_{39}$ &\\
\cline{2-5}
& \multirow{2}{*}{$K_{56}$}  &  \multirow{2}{*}{65} & $\{ \delta_{26},
\delta_{35}\} \in K_{30}$ &\multirow{2}{*}{32}\\
& & & $\{\delta_{16}, \delta_{35}\} \in K_{34}$ &\\
\cline{2-5}
& $K_{57}$  &  66 & $\{ \delta_{16}, \delta_{24}\} \in K_{32}$ &24\\
\cline{2-5}
& \multirow{3}{*}{$K_{58}$}  &  \multirow{3}{*}{64} & $\{ \delta_{37},
\delta_{46}\} \in K_{34}$ &\multirow{3}{*}{42}\\
& & & $\{\delta_{16}, \delta_{25}\} \in K_{35}$& \\
& & & $\{\delta_{26}, \delta_{35}\} \in K_{39}$& \\
\cline{2-5}
&  \multirow{3}{*}{$K_{59}$}  &   \multirow{3}{*}{64} & $\{
\delta_{37}, \delta_{46}\} \in K_{36}$ &\multirow{3}{*}{42}\\
& & & $\{\delta_{37}, \delta_{46}\} \in K_{37}$ &\\
& & & $\{\delta_{16}, \delta_{24}\} \in K_{38}$ &\\
\cline{2-5}
&  \multirow{2}{*}{$K_{60}$}  &   \multirow{2}{*}{65} & $\{
\delta_{26}, \delta_{35}\} \in K_{37}$ &\multirow{2}{*}{30}\\
& & & $\{\delta_{37}, \delta_{46}\} \in K_{42}$ &\\
\cline{2-5}
& \multirow{2}{*}{$K_{61}$}  &  \multirow{2}{*}{64} & $\{ \delta_{27},
\delta_{36}\} \in K_{39}$ &\multirow{2}{*}{42}\\
& & & $\{\delta_{16}, \delta_{25}\} \in K_{40}$& \\
\hline
\end{tabular} }
\quad
\subfloat{\footnotesize \begin{tabular}[t]{|c|l|c|l|c|}    \hline Vertices &
Name & Facets & Comes from & $-\chi(\Theta)$\\
\hline \hline
\multirow{28}{*}{20} &  \multirow{2}{*}{$K_{62}$} &
\multirow{2}{*}{68}  &  $\{v_4,\delta_{35}\} \in K_{47}$&\multirow{2}{*}{40} \\
& & & $\{v_4, \delta_{25}\} \in K_{48}$ &\\
\cline{2-5}
&    \multirow{3}{*}{$K_{63}$} &  \multirow{3}{*}{69} &
$\{\delta_{26},\delta_{35}\} \in K_{47}$&\multirow{3}{*}{32} \\
& & & $\{v_3, \delta_{37}\} \in K_{50}$ &\\
& & & $\{v_1, \delta_{47}\} \in K_{60}$ &\\
\cline{2-5}
&    \multirow{2}{*}{$K_{64}$} &  \multirow{2}{*}{69} &
$\{\delta_{37},\delta_{46}\} \in K_{47}$&\multirow{2}{*}{34} \\
& & & $\{v_1, \delta_{37}\} \in K_{59}$ &\\
\cline{2-5}
&   \multirow{4}{*}{$K_{65}$} & \multirow{4}{*}{68} &
$\{v_4,\delta_{25}\} \in K_{48}$ &\multirow{4}{*}{44}\\
& & & $\{\delta_{16}, \delta_{25}\} \in K_{49}$& \\
& & & $\{v_3, \delta_{25}\} \in K_{58}$ &\\
& & & $\{v_3, \delta_{24}\} \in K_{59}$ &\\
\cline{2-5}
&    \multirow{3}{*}{$K_{66}$} &  \multirow{3}{*}{69} &
$\{\delta_{37},\delta_{46}\} \in K_{48}$ &\multirow{3}{*}{34}\\
& & & $\{\delta_{16}, \delta_{25}\} \in K_{50}$& \\
& & & $\{v_1, \delta_{37}\} \in K_{58}$ &\\
\cline{2-5}
&    \multirow{5}{*}{$\boldsymbol{K_{67}}$} &  \multirow{5}{*}{69}  &  
$\{\delta_{37},\delta_{46}\} \in K_{51}$ &\multirow{5}{*}{38}\\
& & & $\{v_3, \delta_{14}\} \in K_{54}$ &\\
& & & $\{\delta_{37}, \delta_{46}\} \in K_{55}$& \\
& & & $\{\delta_{14}, \delta_{57}\} \in K_{58}$& \\
& & & $\{\delta_{26}, \delta_{35}\} \in K_{61}$ &\\
\cline{2-5}
&    \multirow{2}{*}{$\boldsymbol{K_{68}}$} &  \multirow{2}{*}{70} &  
$\{\delta_{26},\delta_{35}\} \in K_{52}$&\multirow{2}{*}{28} \\
& & & $\{\delta_{14}, \delta_{57}\} \in K_{56}$ &\\
\cline{2-5}
&    \multirow{3}{*}{$\boldsymbol{K_{69}}$} &  \multirow{3}{*}{70} &  
$\{\delta_{37},\delta_{46}\} \in K_{52}$ &\multirow{3}{*}{30}\\
& & & $\{\delta_{16}, \delta_{35}\} \in K_{54}$ &\\
& & & $\{\delta_{37}, \delta_{46}\} \in K_{57}$ &\\
\cline{2-5}
&    \multirow{3}{*}{$\boldsymbol{K_{70}}$} &  \multirow{3}{*}{69} &  
$\{\delta_{26},\delta_{35}\} \in K_{55}$ &\multirow{3}{*}{38}\\
& & & $\{\delta_{16}, \delta_{25}\} \in K_{56}$ &\\
& & & $\{\delta_{16}, \delta_{35}\} \in K_{58}$ &\\
\cline{2-5}
&  $K_{71}$ & 69 & $\{v_3,\delta_{35}\} \in K_{60}$&34 \\
\hline
\multirow{7}{*}{21} &  \multirow{4}{*}{$K_{72}$} & \multirow{4}{*}{73}
& $\{\delta_{26},\delta_{35}\} \in K_{62}$&\multirow{4}{*}{36} \\
& & & $\{v_3, \delta_{37}\} \in K_{64}$ &\\
& & & $\{v_4, \delta_{25}\} \in K_{65}$ &\\
& & & $\{v_4, \delta_{25}\} \in K_{66}$ &\\
\cline{2-5}
&   \multirow{3}{*}{$K_{73}$} &  \multirow{3}{*}{73} 
& $\{v_4,v_6\} \in K_{63}$& \multirow{3}{*}{36}\\
& & & $\{v_3, \delta_{37}\} \in K_{66}$ &\\
& & & $\{v_1, \delta_{47}\} \in K_{71}$ &\\
\hline
\multirow{2}{*}{22} &  \multirow{2}{*}{$\boldsymbol{K_{74}}$} 
& \multirow{2}{*}{77} & $\{v_6,\delta_{26}\} \in K_{72}$&\multirow{2}{*}{38} \\
& & & $\{v_4, v_6\} \in K_{73}$ &\\
\hline
\end{tabular} }
\caption{3-spheres with $T^2(\K) = 0$ generated by edge starrings.}
\end{table}

\restoregeometry 
\bibliographystyle{amsalpha} \bibliography{jan}
\address{Jan Arthur Christophersen\\
Matematisk institutt\\
 Postboks 1053 Blindern\\ 
 University of Oslo\\
N-0316 Oslo, Norway}{christop@math.uio.no}

\address{
Nathan Owen Ilten\\
Department of Mathematics\\
University of California\\
Berkeley CA 94720, USA\\}{nilten@math.berkeley.edu}

\end{document}